\begin{document}

\title{A Free Boundary Problem Related to Thermal Insulation}

\author{Luis A. Caffarelli}
\address[Luis A. Caffarelli]{Department of Mathematics, The University of Texas at Austin, Austin, Texas}
\email{caffarel@math.utexas.edu}

\author{Dennis Kriventsov}
\address[Dennis Kriventsov]{Courant Institute of Mathematical Sciences, New York University, New York}
\email{dennisk@cims.nyu.edu}  

\date{November 18, 2015}

\begin{abstract}
 We study a free boundary problem arising from the theory of thermal insulation. The outstanding feature of this set optimization problem is that the boundary of the set being optimized is not a level surface of a harmonic function, but rather a hypersurface along which a harmonic function satisfies a Robin condition. We show that minimal sets exist, satisfy uniform density estimates, and, under some geometric conditions, have ``locally flat'' boundaries. 
\end{abstract}

\maketitle

\section{Introduction}

In this article we study the following variational problem. We are given a domain $\W \ss \R^n$, and we consider the minimizing pair $(A,u)$, where $A$ is a set containing $\W$ and $u$ is a nonnegative function on $A$ with $u\equiv 1$ on $\W$, of the energy
 \begin{equation}\label{eq:model3}
  F(A,u) = \int_{A} |\n u|^2 d\cL^n + h \int_{\partial A} u^2 d\cH^{n-1}  +  C_{0} \cL^n(A).
 \end{equation}
Here $h,C_0>0$ are fixed constants. 

This problem is motivated by the following optimal insulation configuration: the variable $u$ is the temperature. In $\W$ we keep a constant temperature, $u\equiv 1$. We are able to insulate the domain $\W$ with some bulk insulation that occupies $A$, and further $A$ is covered by a thin layer $\G_\e$, of width $\e$, of highly insulating ($\approx \e^{-1}$) material. The energy of this configuration is then
 \begin{equation}\label{eq:model2}
  F_{\e}(A,u) = \int_{A} |\n u|^2 d\cL^n + h \e \int_{\G_\e} |\n u|^2 d\cL^{n}  +  C_{0} \cL^n(A),
 \end{equation}
where $C_0$ represents the per-unit cost of the insulator $A$. In the $\e$ layer $u$ will be jumping from some value $u_0$ in $A$ to $0$ in the complement of $A\cup \G_\e$, and so $|\n u| \sim -u_\nu \sim \frac{u_0}{\e}$ (here $\nu$ stands for the outward unit normal to $A$). This suggests that the limiting functional will take the form \eqref{eq:model3}.

Minimizers of \eqref{eq:model3} have very different qualitative behavior from solutions of the conventional one-phase problem (which arises when there is no layer $\G_\e$; see \cite{AC}). One possible phenomenon might occur in the following situation, which we describe informally. Start with any configuration for which $A\sm \W$ is nonempty, and add a tiny ball $B_r(x)$ to $\W$, with $x$ in the interior of $A\sm \W$ and $r$ much smaller than the distance from $x$ to $\partial(A\sm \W)$. A trivial competitor for the new $A$ is the old insulator with a small annulus deleted around $B_r (x)$; this increases the value of $F$ by an amount on the order of $r^{n-1}$. A refinement of this results in a competitor where $A$ has two components which \emph{share boundary}. If the optimal $A$ was to be connected, however, it appears unavoidable that the value of $F$ would be increased by an amount on the order of the capacity of $B_r(x)$, or $r^{n-2}$, which in the limit of $r$ small is much larger. In particular, this suggests that having two local components of $A$ sharing boundary is an unavoidable feature of some minimizers.

The first goal of this paper is to show that, for a given set $\W$, there exists a minimal set $A$. Our approach is based on the basic observation, evident from \eqref{eq:model3}, that if we know, a priori, that the minimal function $u$  is bounded from below by a strictly positive number, then the perimeter of $A$ is controlled by $F$. Such an a priori bound from below on $u$ is not trivial, and requires some use of the global structure and the volume penalization. Nevertheless, we establish this bound in Section 3.

As the example above suggests, sets of finite perimeter are not a suitable relaxed setting for this problem, as they will not properly count the perimeter of those pieces of the boundary of $A$ for which $A$ has Lebesgue density $1$. Instead, we use special functions of bounded variation, which have the same useful compactness and semicontinuity properties but offer extra flexibility. Effectively, we view both the set $A$ and the function $u$ as a function which is allowed to have codimension-$1$ jumps. These special functions of bounded variation were introduced specifically to address ``free discontinuity'' problems, the most famous of which is the minimization of the Mumford-Shah functional introduced in \cite{MS}. It turns out that our situation is quite closely related to the Mumford-Shah problem. Apart from obvious differences in the global structure, the major local difference is that the Mumford-Shah functional counts the perimeter without multiplicity, while we count the (weighted) perimeter with multiplicity (either $1$, when there is one component of $A$ bordering a point of $\partial A$, or $2$, if there are two).

More precisely, we identify the pairs $(A,u)$ above with functions $u1_A$, which are in $SBV$ at least when $\cH^{n-1}(A)<\8$. Then the energy in \eqref{eq:model3} is expressed as
\begin{equation}\label{eq:model1}
 F(u) = \int_{\R^n} |\n u|^2 d\cL^n + h \int_{S_u} \uu^2 +\ud^2 d\cH^{n-1} + C_0\cL^n(\{u>0\}).
\end{equation}
Here $\uu,\ud$ are the approximate upper and lower limits of $u$ (they are defined in the next section), $S_u$ is the singular set where $\ud<\uu$, and $\n u$ is the absolutely continuous part of the derivative of $u$. This version of the functional is defined for bounded $SBV$ functions, and when evaluated on functions of the form $u1_A$ for a smooth domain $A$ coincides with \eqref{eq:model3}. When $A$ has regular pieces of boundary with $A$ on either side, this version of $F$ will add the contribution of the trace of $u$ from each side to the surface term. One of the main results of the paper is then the following theorem, whose proof is based on the a priori estimate described above:

\begin{theorem} Let $\W$ be a bounded open set. Then there is a function $u\in SBV(\R^n)$, with $u(x)= 1$ for $\cL^n$-a.e. $x\in \W$,  such that
\[
 F(u)\leq F(v)
\]
for all $v$ with the same assumptions. Moreover, $u(x) \in \{0\}\cup [\d,1]$ for $\cL^n$-a.e. $x\in \R^n$, where $\d=\d(\W)>0$.
\end{theorem}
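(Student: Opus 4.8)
\emph{Plan and reductions.} The naive direct method fails because $h\int_{S_u}(\uu^2+\ud^2)\,d\cH^{n-1}$ does not control $\cH^{n-1}(S_u)$, so an $F$-bounded minimizing sequence need not be precompact in $SBV$. The a priori lower bound is exactly the remedy: if $u$ takes values only in $\{0\}\cup[\d,1]$, then $\uu\ge\d$ on $S_u$, whence $\cH^{n-1}(S_u)\le F(u)/(h\d^2)$. My plan is therefore to run the direct method inside a class where this bound is imposed, and then use the a priori estimate to show the imposed constraint is inactive once it is small. Reductions: a finite-energy competitor exists (take $1_{\W'}$ for a smooth bounded $\W'\supseteq\W$), so $m:=\inf F<\infty$; truncating $u\mapsto\min(\max(u,0),1)$ does not increase any term of $F$ and preserves $u\equiv1$ a.e.\ on $\W$, so I may assume $0\le u\le1$; and since $\cL^n(\{u>0\})\le F(u)/C_0$, a routine cutoff at a large sphere lets me assume all competitors are supported in a fixed ball $B_R$.

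\emph{Constrained minimizers.} For $\d\in(0,1)$ let $\mathcal{K}_\d$ be the class of admissible $u\in SBV(\R^n)$ (supported in $B_R$, with $u\equiv1$ a.e.\ on $\W$) taking values in $\{0\}\cup[\d,1]$, and put $m_\d:=\inf_{\mathcal{K}_\d}F$, so $m_\d\le F(1_{\W'})<\infty$. Along a minimizing sequence the bounds $\|u_k\|_\infty\le1$, $\int|\n u_k|^2\le m_\d+1$, and $\cH^{n-1}(S_{u_k})\le(m_\d+1)/(h\d^2)$ are uniform, so Ambrosio's $SBV$ compactness and the lower semicontinuity of all three terms --- weak l.s.c.\ for the Dirichlet term, Fatou for $\cL^n(\{u>0\})$, and the lower semicontinuity theorem for jump energies $\int_{S_u}\phi(\uu,\ud)\,d\cH^{n-1}$ applied to $\phi(a,b)=h(a^2+b^2)$ (continuous, symmetric, and satisfying $\phi(a,c)\le\phi(a,b)+\phi(b,c)$) --- yield a minimizer $v_\d\in\mathcal{K}_\d$. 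Since $\mathcal{K}_\d$ grows as $\d$ decreases, $m_\d$ is nonincreasing and bounded by $F(1_{\W'})$; hence $\int|\n v_\d|^2$ and $\cL^n(\{v_\d>0\})$ are bounded uniformly in $\d$.

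\emph{The a priori bound --- the main obstacle.} I expect the key estimate (Section 3) to read: there is $\d^*=\d^*(\W)>0$ such that for every $\d<\d^*$ the minimizer $v_\d$ in fact takes values in $\{0\}\cup[\d^*,1]$. The mechanism: if $v_\d$ took values in $(0,\d^*)$ on a positive-measure set, then for a suitable $t\in(\d,\d^*)$ the competitor $v^{(t)}$, obtained by setting $v_\d$ equal to $0$ on $\{v_\d<t\}$, still lies in $\mathcal{K}_\d$ (its values are in $\{0\}\cup[t,1]$), so minimality forces
\[
 C_0\,\cL^n(\{0<v_\d<t\})+\int_{\{v_\d<t\}}|\n v_\d|^2\;\le\;h t^2\,\cH^{n-1}\!\left(\partial^*\{v_\d>t\}\setminus S_{v_\d}\right),
\]
since the surface energy can only drop on $S_{v_\d}$ while the new jumps, along the level set $\{v_\d=t\}$, have height $t$. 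The coarea formula bounds the right side: averaging $t$ over a short interval below $\d^*$ produces a level at which $\cH^{n-1}(\partial^*\{v_\d>t\}\setminus S_{v_\d})\le C(1/\d^*)\,\cL^n(\{0<v_\d<\d^*\})^{1/2}\|\n v_\d\|_{L^2}$, with $\|\n v_\d\|_{L^2}$ bounded uniformly in $\d$. Feeding this back and using the volume gain on the left should produce an iterative (or differential) inequality for $\cL^n(\{0<v_\d<t\})$ forcing it to vanish for $t$ below a threshold depending only on $\W$. The global structure --- $u\equiv1$ on $\W$ anchoring a region where $v_\d$ stays bounded below, so that ``$v_\d$ small'' genuinely marks a wasteful interface --- is what makes the volume penalization bite. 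Making this rigorous, above all closing the iteration with a threshold independent of $\d$, is the step I expect to be hardest; a related point is that the same comparison arguments should also give the density estimates controlling the support of $v_\d$ inside $B_R$.

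\emph{Conclusion.} Granting the bound, two short steps finish the proof. First, $m_\d\to m$ as $\d\to0$: for any competitor $w$ (truncated to $[0,1]$), the same cutting at a coarea-chosen $t\in(\d,2\d)$ gives $w^{(t)}\in\mathcal{K}_\d$ with $F(w^{(t)})\le F(w)+4h\d\,\cL^n(\{0<w<2\d\})^{1/2}\|\n w\|_{L^2}$, which tends to $F(w)$ because $\cL^n(\{0<w<2\d\})\to0$; hence $\limsup_{\d\to0}m_\d\le F(w)$, so $\le m$, while $m_\d\ge m$ trivially. Second, if $\d,\d'<\d^*$ then by the a priori bound $v_\d,v_{\d'}\in\mathcal{K}_{\d^*}\subseteq\mathcal{K}_\d\cap\mathcal{K}_{\d'}$, so each competes in the other's problem and $m_\d=F(v_\d)\le F(v_{\d'})=m_{\d'}\le F(v_\d)$; thus $m_\d$ is constant on $(0,\d^*)$, equal, by the first step, to $m$. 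Therefore $u:=v_{\d^*/2}$ satisfies $F(u)=m$ and so minimizes $F$ over all admissible functions (the truncation and bounded-support reductions having left the infimum unchanged), and $u\in\{0\}\cup[\d^*,1]$ a.e.; taking $\d(\W):=\d^*$ completes the argument.
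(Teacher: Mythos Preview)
Your architecture is correct and matches the paper's: reduce to a constrained class where $u\in\{0\}\cup[\delta,1]$, use the resulting control on $\cH^{n-1}(S_u)$ to run Ambrosio compactness, and invoke the key lower-bound estimate to show the constraint is inactive. Your constrained minimizers $v_\delta$ are automatically \emph{inward minimizers} (since $v_\delta 1_A\in\mathcal K_\delta$ for any $A\supseteq\W$), which is exactly the hypothesis under which the paper proves the key estimate; this lets you bypass the paper's auxiliary set-minimization step (Lemma~\ref{lem:wkmin}), where an arbitrary competitor is first truncated and then an inward minimizer is extracted by minimizing $F(v_{a_k}1_A)$ over sets $A$. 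Your route is a clean shortcut here.

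Two points where your sketch is thinner than the paper. First, the key estimate: your single-good-level argument via coarea does not close by itself, because the volume gain $\cL^n(\{0<v_\delta<t\})$ on the left need not be comparable to the $\cL^n(\{0<v_\delta<\delta^*\})$ appearing on the right. The paper instead sets $f(t)=\int_0^t s\,P(\{u>s\};\R^n\setminus S_u)\,ds$, uses the inward-minimality inequality to bound all of the Dirichlet, volume, and low-jump terms by $tf'(t)$, and combines Cauchy--Schwarz with the $BV$ Sobolev inequality applied to $u^2 1_{\{u\le t\}}$ to obtain $f(t)\le C\bigl(tf'(t)\bigr)^{1+\frac{1}{2n}}$; integrating this ODE-type inequality forces $f\equiv 0$ below an explicit threshold $\delta=\exp(-c\bar F^{1/(2n+1)})$. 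Second, the support reduction: your ``routine cutoff at a large sphere'' needs the a priori bound (the paper's Corollary~\ref{cor:radius}) to pin down a \emph{fixed} $R$ independent of the competitor, since otherwise your $v_{\delta^*/2}$ only minimizes among $B_R$-supported functions. The paper folds this into the truncation $v_a=v\,1_{\{v>a\}\cap B_{1/a}}$ and then shows inward minimizers are supported in $B_{1/\delta_0}$.
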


Using the argument of \cite{DCL} for the Mumford-Shah functional, we next show that the singular set $S_u$ of the relaxed minimizer satisfies the density estimates
\[
 cr^{n-1}\leq \cH^{n-1}(S_u \cap B_r(x)) \leq Cr^{n-1},
\]
where $x\in \bar{S}_u$ and $r<d(x,\W)$. The argument for this estimate demands only superficial changes from the Mumford-Shah case, and we include it mainly for completeness. As a consequence, we may extract from the relaxed minimizer an open set $A$ and a harmonic function $u$ which are minimizers in the ``classical'' sense.

In a companion paper \cite{Kriv}, the second author shows that at every point $x\in \bar{S_u}$ at which $S_u \cap B_r(x)$ is trapped between two parallel hyperplanes within $\e r$ of each other, for a sufficiently small $\e$, $\bar{S}_u \cap B_{r/2}$ is actually equal to the union of the graphs of two $C^{1,\a}$ functions. The remainder of this paper is devoted to finding conditions under which this hypothesis is satisfied, using blow-up techniques. The blow-ups are homogeneous of degree $\frac{1}{2}$, as this is the natural scaling of the energy; this type of blow-up was introduced by Bonnet \cite{Bon} for the Mumford-Shah functional in the plane, which has the same scaling. In our case, the blow-up limits are local minimizers of a certain limiting functional, and this global problem has rather different structure from the Mumford-Shah case.

We show that at points on the boundary of at least two local connected components of $\{u>0\}$, or at points of vanishing density of $\{u=0\}$, the set $\bar{S}_u$ is locally flat (meaning the theorem in \cite{Kriv} applies), except for a set of codimension $8$.  In the case of $n=2$, we also show that points on the boundary of a component of $\{u=0\}$ satisfy the flatness condition. Whether there are any non-flat points for planar minimizers which are accumulation points of connected components of $\{u=0\}$ is left as an open question.

There are many other questions related to this kind of free boundary problem which remain open as well. For example, it would be reasonable to expect that if $\W$ is convex, then $\{u>0\}$ is also convex, or at least star-shaped. If it was shown to be convex, it would then follow from Lemma \ref{lem:EL} that at each point $x\in \p \{u>0\}$, the blow-ups of $\{u>0\}$ are half-spaces, and so $\p \{u>0\}$ is a smooth hypersurface. It is less clear whether in dimension higher than two the property of $\p \{u>0\}$ locally being a Lipschitz graph ensures flatness, as there are cones and homogeneous functions which are stationary for the blow-up problem (in the sense of satisfying the conclusion of Lemma \ref{lem:EL} at all points) which have singularities. Finally, there are other problems which may have similar free boundary conditions: for example, if in our motivating example the temperature in $\W$ was not constant but rather some function $\phi>0$, similar considerations would lead to the problem of minimizing
\[
 \int_{\p \W} -u_{\nu_\W}d\cH^{n-1} + \cL^n(A)
\]
where $u=\phi$ on $\p \W$, $u_{\nu_A} + h u=0$ on $\p A$, and $u$ is harmonic on $A\sm \W$; see \cite{Aguilera1987} for the case of the Dirichlet condition. Another example, considered recently in \cite{Bucur2015}, features $p$-Laplace type energies and nonlinear surface terms.

The structure of the paper is as follows: Section 2 outlines the notation and reviews relevant facts from geometric measure theory. Section 3 gives a proof of the a priori lower bound on $u$, and Section 4 uses this estimate to prove the existence of minimizers. Section 5 presents the density estimate on $S_u$ and some corollaries. Section 6 gives the details for a blow-up procedure that is used in the rest of the paper. Then Section 7 collects some facts about solutions to a global minimization problem, obtained in the blow-up limit. Section 8 uses this and some results from minimal surface theory to give flatness criteria, while Section 9 derives some additional flatness criteria in the plane.

After completing this work, we learned of the recent paper \cite{Bucur2014}, which deals with the same problem. The authors there also use a relaxation to SBV, prove a lower bound on $u$ like ours from Section 3, and construct minimizers. They also present a completely different argument from the one in Section 5 for the lower density estimate on the jump set $J_u$; their method has the advantage of giving an explicit constant in this estimate. The topic of regularity and the blow-up methods in our remaining sections are not covered in \cite{Bucur2014}. We also learned of the related work \cite{Bucur2015} mentioned above and the very recent preprint \cite{2016arXiv160102146B}, which deals with some global issues for similar problems.

\section{Notation and Tools}

In this section we explain our notation and discuss the relevant properties
of the space $SBV$ of special functions of bounded variation. Most of the results here can be found in the book of Ambrosio, Fusco, and Pallara \cite{AFP}.

\subsection{Functions of Bounded Variation}

The space $BV(\mathbb{R}^{n})$ contains functions $u$ in $L^1 (\R^n)$ with distributional gradients representable by finite Borel regular measures (which we write as $Du$), and is equipped with the usual norm:
\[
 \|u\|_{BV(R^n)}=\|u\|_{L^1(R^n)}+|Du|(\R^n).
\]
For a function of bounded variation, we define the \emph{measure-theoretic upper and lower limits} by
\[
 \uu(x)=\inf\3 t \mid \limsup_{r\searrow 0}\fint_{B_{r}(x)}(u-t)_{+}d\cL^n=0\4
\]
and
\[
 \ud(x)=\sup\3 t \mid \limsup_{r\searrow 0}\fint_{B_{r}(x)}(t-u)_{+}d\cL^n=0\4.\]
 Points for which $\uu(x)=\ud(x)$ are referred to as \emph{points of approximate continuity}, while $S_u=\{x\mid \uu(x)>\ud(x)\}$ is the singular set. Notice that points of approximate continuity are precisely those $x$ for which the blow-ups $v_{x,r}(y)=u(x+ry)$ converge to constant functions in $L^1_{loc}$ as $r\searrow 0$. Sometimes we will prefer to work with $K_u=\bar{S}_u$, the topological closure.
 
 The set $J_u\ss S_u$ of \emph{jump points} for a function of bounded variation consists of those points for which the blow-ups $v_{x,r}(y)=u(x+ry)$ converge to a function constant on each side of a hyperplane; i.e. there exists a unit vector $\nu_x$ such that
 \[
  v_{x,r}\xrightarrow{L^1_{loc}} 1_{\{y\mid\langle \nu_x,y\rangle < 0\}}\ud(x)+1_{ \{y\mid\langle \nu_x,y\rangle > 0\}}\uu(x)
 \]
as $r\searrow 0$. For the properties of the sets $J_u$ and $S_u$ we refer to \cite[3.6-3.7]{AFP}; the most consequential fact about them for us is the Federer-Vol'pert theorem:

\begin{proposition}\label{prop:FedVol} Let $u\in BV(\R^n)$. Then $S_u$ is countably $\cH^{n-1}$-rectifiable, $\cH^{n-1}(S_u \sm J_u)=0$, and
\[
 Du \mres S_u = (\uu-\ud)\nu_x \cH^{n-1}\mres J_u.
\]
\end{proposition}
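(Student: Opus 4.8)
This is the Federer--Vol'pert structure theorem, which I would prove along the lines of Ambrosio--Fusco--Pallara. The argument naturally splits into three parts: (I) $S_u$ is $\sigma$-finite with respect to $\cH^{n-1}$; (II) $S_u$ is countably $\cH^{n-1}$-rectifiable; (III) for $\cH^{n-1}$-a.e. $x\in S_u$ the approximate limits $\uu(x),\ud(x)$ are finite, $x\in J_u$, and the rescalings $u_{x,r}(y)=u(x+ry)$ converge in $L^1_{loc}$ to $\ud(x)1_{\{\langle\nu_x,y\rangle<0\}}+\uu(x)1_{\{\langle\nu_x,y\rangle>0\}}$. From (III) one reads off both $\cH^{n-1}(S_u\sm J_u)=0$ and, after integration, the identity $Du\mres S_u=(\uu-\ud)\nu_x\,\cH^{n-1}\mres J_u$.

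For (I), set $\mu=|Du|$, a finite Radon measure. The Poincaré inequality on balls, $\fint_{B_r(x)}|u-u_{B_r(x)}|\,d\cL^n\le C_n\,r^{1-n}\mu(B_r(x))$, shows that a point at which the approximate jump of $u$ has size at least $t$ must satisfy $\Theta^{*,n-1}(\mu,x)\ge c_n t$; hence, modulo $\cH^{n-1}$-null sets, $S_u\ss\bigcup_k\{x:\Theta^{*,n-1}(\mu,x)>1/k\}$. A standard Vitali-type covering comparison between $\mu$ and $\cH^{n-1}$ gives $t\,\cH^{n-1}(\{x:\Theta^{*,n-1}(\mu,x)>t\})\le C_n\,\mu(\R^n)$ for every $t>0$, so each of these pieces has finite $\cH^{n-1}$ measure and $S_u$ is $\sigma$-finite.

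For (II) and (III), fix a piece $\Sigma\ss S_u$ with $\cH^{n-1}(\Sigma)<\infty$ and work at points $x\in\Sigma$ which are typical in the following senses, each holding $\cH^{n-1}$-a.e. on $\Sigma$: $x$ is a density point of $\Sigma$ relative to $\cH^{n-1}\mres\Sigma$; $\Theta^{*,n-1}(\mu,x)<\infty$; and $x$ is a Lebesgue point, with respect to $\cH^{n-1}\mres\Sigma$, of the Radon--Nikodym density of $Du\mres\Sigma$. At such an $x$ I would blow up: since the $u_{x,r}$ need not be bounded in $L^1_{loc}$, subtract suitable medians $m_{x,r}$ and use $BV$ compactness together with the uniform bound $r^{1-n}\mu(B_r(x))\le C$ to pass, along a subsequence, to a limit $v\in BV_{loc}(\R^n)$ with $r^{1-n}Du_{x,r}$ and $r^{1-n}|Du_{x,r}|$ converging weakly-$*$ to $Dv$ and $|Dv|$. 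On the $\cH^{n-1}$-finite set $\Sigma$ the absolutely continuous part of $Du$ (which lives on a set of positive $\cL^n$ measure) and the Cantor part (which annihilates $\cH^{n-1}$-finite sets) contribute nothing in the limit, so $Dv$ is concentrated on the approximate tangent hyperplane $T$ to $\Sigma$ at $x$. A $BV$ function on a ball whose distributional gradient is concentrated on a hyperplane is constant on each of the two open half-balls, so $v=\alpha 1_{\{\langle\nu,y\rangle<0\}}+\beta 1_{\{\langle\nu,y\rangle>0\}}$ with $\nu\perp T$; matching against the definitions of $\uu(x)$ and $\ud(x)$ forces $\{\alpha,\beta\}=\{\ud(x),\uu(x)\}\ss\R$, so both are finite, $x\in J_u$, and $Dv=(\uu(x)-\ud(x))\nu_x\,\cH^{n-1}\mres T$. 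This is (III). That $\cH^{n-1}$-a.e. point of $\Sigma$ has an approximate tangent hyperplane (a byproduct of the blow-up) gives (II) by the standard rectifiability criterion; and integrating the pointwise density $(\uu(x)-\ud(x))\nu_x$ just computed over the $\sigma$-finite rectifiable set $S_u$, where $\cH^{n-1}(S_u\sm J_u)=0$, yields the displayed formula for $Du\mres S_u$.

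The main obstacle is the assertion in the blow-up step that the tangent measure of $\mu\mres\Sigma$ at a typical $x$ is concentrated on an \emph{affine hyperplane} with constant multiplicity --- equivalently, that the blow-up $v$ is a genuine two-valued jump function rather than something with a rougher transverse profile or concentrated on a non-flat rectifiable set. Positive finite $(n-1)$-density alone does not force this for general measures; one must use that $\mu\mres\Sigma$ is the restriction of a gradient measure satisfying the lower density bound $\Theta^{*,n-1}(\mu,\cdot)>0$ on $S_u$. I would establish it either by a rigidity argument for tangent measures of gradient measures, or, following Vol'pert's original route, by slicing: for each coordinate direction and a.e. line parallel to it, $u$ restricts to a one-dimensional $BV$ function with at most countably many jump points, and a Fubini-type analysis of these one-dimensional jump sets recovers the rectifiability of $S_u$ and the jump structure directly. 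In practice I would run the density/blow-up argument as the main line and invoke the slicing characterization of $BV$ precisely to handle this transverse flatness; this is also the point at which the argument is genuinely more than bookkeeping.
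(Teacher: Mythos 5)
The paper does not prove this proposition: it is stated as a standard fact with a citation to \cite[3.6--3.7]{AFP}, so there is no in-paper argument to compare against. Your sketch is a reasonable and essentially correct outline of a genuine proof. The structure you describe---(I) $\sigma$-finiteness of $S_u$ with respect to $\cH^{n-1}$ via the Poincar\'e-type density lower bound and a Vitali comparison; (II)--(III) a blow-up at $\cH^{n-1}$-typical points of each $\sigma$-finite piece, using median subtraction for compactness and the vanishing of the absolutely continuous and Cantor parts on $\cH^{n-1}$-finite sets---is the Federer-style route, and you are right that the crux is showing the tangent measure is a constant multiple of $\cH^{n-1}$ restricted to a hyperplane, not merely a rectifiable measure of positive finite density. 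Flagging this and proposing to close it via the slicing characterization of $BV$ (Vol'pert's original line) is exactly the honest place to put the real work.

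One remark on the route you would likely find in the cited reference: AFP organizes the proof differently, through the coarea formula and De Giorgi's rectifiability theorem for sets of finite perimeter. One shows $S_u$ is $\cH^{n-1}$-a.e. covered by a countable union of reduced boundaries $\partial^*\{u>t_i\}$ (for a countable dense set of levels $t_i$), which are individually rectifiable with blow-ups equal to half-spaces; the jump structure and the formula for $Du\mres J_u$ then come from matching the blow-ups of $u$ against those of the level sets. This sidesteps the transverse-flatness issue you identify, because flatness is inherited from the already-established rectifiability of $\partial^*\{u>t\}$ rather than proved directly for a tangent measure of $|Du|$. Both approaches are standard and correct; yours is somewhat closer to Federer's and Mattila-style tangent-measure reasoning, while the level-set route is the one the citation points to. Either way, the outline you give has no gap that would not be filled by invoking the slicing or coarea machinery you mention.
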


Using this theorem and the Lebesgue decomposition, we arrive at a representation $Du=\n u \cL^n+ (\uu-\ud)\nu_x \cH^{n-1}\mres J_u + D^{c}u$, where $\n u$ is the density of the part of $Du$ which is absolutely continuous with respect to Lebesgue measure. The third term represents the remaining singular part of $Du$, and $u$ is said to be a \emph{special function of bounded variation} if that term vanishes. We set $SBV(\R^n)$ to be the linear subspace of $BV$ containing such functions.

The essential properties of $SBV$ are summarized below. The first proposition is a compactness and closure property of $SBV$, which is a special case of \cite[Theorem 4.7, 4.8]{AFP} (except the very last statement, which instead may be deduced from Theorem 5.22; the reason that the integrand is jointly convex is explained in Example 5.23b, while the extra assumption on the integrand being bounded from below is only used to show that $\cH^{n-1}(J_{u_i})$ is uniformly bounded, which we assume separately).

\begin{proposition}\label{prop:SBVcpt}
 Let $\{u_i\}_{i\in \N}$ be a sequence of functions in $SBV(B_R)\cap L^\8 (B_R)$. Then the following hold:
 \begin{itemize}
  \item[(i)] If
  \[ 
   \sup_i \1 \int_{B_R}|\n u_i|^2 d\cL^n+\|u_i\|_{L^\8 (B_R)}+\cH^{n-1}(J_{u_i})\2<\infty,
  \]
   then there is a subsequence $u_{i_k}\rightarrow u$ in $L^1(B_R)$, and $u\in SBV$.
  \item[(ii)] Furthermore, along the subsequence above we have
  \[
   \int_{B_R}|\n u|^2 d\cL^n\leq \liminf_k \int_{B_R}|\n u_{i_k}|^2 d\cL^n,
  \]
  \[
   \cH^{n-1}(J_{u})\leq \liminf_k \cH^{n-1}(J_{u_{i_k}}),
  \]
  and
  \[
   \int_{J_u}\uu^2+\ud^2 d\cH^{n-1}\leq \liminf_k \int_{J_{u_{i_k}}}\uu^2_{i_k}+\ud^2_{i_k} d\cH^{n-1}.
  \]
 \end{itemize}
\end{proposition}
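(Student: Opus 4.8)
This is (a special case of) Ambrosio's $SBV$ closure and lower semicontinuity theorems, and the efficient option is to cite \cite[Theorem 4.7, 4.8]{AFP} for (i) and the first two conclusions of (ii), and \cite[Theorem 5.22]{AFP} --- run with the integrand discussed in Example~5.23b there, and with the uniform bound on $\cH^{n-1}(J_{u_i})$ that we assume in place of its ``bounded below'' hypothesis --- for the last one. If one wanted to reconstruct the argument, the plan would be this. First, reduce to plain $BV$ compactness: Cauchy--Schwarz turns the $L^2$-bound on $\n u_i$ into a bound on $\int_{B_R}|\n u_i|\,d\cL^n$, and $\int_{J_{u_i}}(\uu_i-\ud_i)\,d\cH^{n-1}\le 2\|u_i\|_{L^\infty(B_R)}\cH^{n-1}(J_{u_i})$ bounds the jump part of $Du_i$, so $\sup_i\|u_i\|_{BV(B_R)}<\infty$; hence along a subsequence $u_i\to u$ in $L^1(B_R)$ with $u\in BV(B_R)$ and $Du_i\rightharpoonup Du$ weakly-$*$. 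Second, split the limiting derivative: the $L^2$-bound makes $\{\n u_i\}$ weakly precompact in $L^2(B_R;\R^n)$, so after a further subsequence $\n u_i\rightharpoonup g$ weakly in $L^2$ and, since $C_0(B_R)\subset L^2(B_R)$, the measures $\n u_i\,\cL^n$ converge weakly-$*$ to $g\,\cL^n$; then $D^ju_i:=(\uu_i-\ud_i)\nu_{u_i}\cH^{n-1}\mres J_{u_i}=Du_i-\n u_i\,\cL^n$ has uniformly bounded mass and converges weakly-$*$ to $\sigma:=Du-g\,\cL^n$. Everything now reduces to showing $u\in SBV(B_R)$ with $\n u=g$ and $D^ju=\sigma$, i.e.\ that no Cantor part is manufactured in the limit.

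That last step is the main obstacle, and I would attack it by slicing. Fix a coordinate direction $e_k$. For $\cH^{n-1}$-a.e.\ $y\in e_k^\perp$ the one-dimensional restrictions $u_i^y$ are one-variable $SBV$ functions with a.c.\ derivative $\langle\n u_i,e_k\rangle$ along the line, jump set $(J_{u_i})_y$, and $u_i^y\to u^y$ in $L^1$; by Fubini and the slicing inequality $\int\#((J_{u_i})_y)\,d\cH^{n-1}(y)\le\cH^{n-1}(J_{u_i})$, the $L^2$-norm of $(u_i^y)'$ and the number of jumps of $u_i^y$ are bounded only \emph{after integration in $y$}, and coping with that non-uniformity is the heart of the matter. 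Fatou nonetheless gives $\liminf_i\#((J_{u_i})_y)<\infty$ for a.e.\ $y$, and for each such $y$ one extracts a ($y$-dependent) subsequence along which the elementary one-dimensional closure statement applies: a sequence of one-variable functions with uniformly bounded jump count and uniformly $L^2$-bounded a.c.\ derivative, converging in $L^1$, has an $SBV$ limit with no more jumps and with a.c.\ derivative the weak limit of the approximating ones. Running this in all $n$ coordinate directions and invoking the slicing characterization of $SBV$ \cite[3.11]{AFP} gives $u\in SBV(B_R)$, identifies $\langle\n u,e_k\rangle$ as the weak limit of $\langle\n u_i,e_k\rangle$ (hence $g=\n u$), and yields $\#(J_{u^y})\le\liminf_i\#((J_{u_i})_y)$; consequently $\sigma=Du-\n u\,\cL^n=D^ju$. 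This is exactly the content of \cite[Theorem 4.7, 4.8]{AFP}, which is why one simply cites it.

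Finally, the lower semicontinuity in (ii). The first inequality is immediate: $\n u=g$ is the weak $L^2$-limit of $\n u_i$, and $v\mapsto\int_{B_R}|v|^2\,d\cL^n$ is weakly lower semicontinuous. The other two are surface integrals $\int_{J_u}\varphi(\uu,\ud,\nu_u)\,d\cH^{n-1}$ with $\varphi$ independent of $\nu$ --- $\varphi\equiv1$ and $\varphi(a,b)=a^2+b^2$, respectively --- each jointly convex in the sense demanded by the general surface-integral lower semicontinuity theorem \cite[Theorem 5.22]{AFP} (trivially when $\varphi\equiv1$; via the computation in Example~5.23b for $\varphi(a,b)=a^2+b^2$), so that theorem applies; the hypothesis there that $\varphi$ be bounded away from $0$ is used only to produce $\sup_i\cH^{n-1}(J_{u_i})<\infty$, which we have assumed. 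Its proof is once more a slicing argument --- reducing to one-dimensional lower semicontinuity of the jump count and of $\sum(\text{jump heights})^2$ under $L^1$ convergence, together with the integral-geometric bookkeeping needed to recover $\cH^{n-1}(J_u)$ from its one-dimensional slices --- and that bookkeeping, rather than any single clever estimate, is the genuine difficulty in the whole statement.
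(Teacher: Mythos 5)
Your proposal is correct and takes the same route as the paper: the paper itself offers no proof beyond the citation to \cite[Theorems 4.7, 4.8]{AFP} for (i) and the gradient/jump-set semicontinuity, and to \cite[Theorem 5.22]{AFP} with Example 5.23b for the weighted surface term (noting, as you do, that the boundedness-below hypothesis there is only needed to recover the uniform bound on $\cH^{n-1}(J_{u_i})$, which is assumed here). The only cosmetic difference is that you attribute the $\cH^{n-1}(J_u)\le\liminf$ inequality to Theorem 5.22 while the paper groups it with Theorems 4.7--4.8; both attributions are valid. Your ``reconstruction'' sketch via slicing is heuristic extra material --- Ambrosio's actual proof in \cite{AFP} proceeds via a chain-rule/truncation argument rather than the $y$-dependent-subsequence slicing you outline, and the step where you identify $g=\nabla u$ from slice-wise limits along $y$-dependent subsequences would need more care --- but since you (correctly) fall back on the citation, this does not affect the proof.
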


\subsection{Sets of Finite Perimeter, Hausdorff Convergence, and Uniform Rectifiability}

A Borel set $E\ss \R^n$ is said to have finite perimeter if $1_E$ lies in $BV$. Given such a set $E$, we define the Borel measure
\[
 P(E;A)=|D1_E|(A).
\]

For an arbitrary set $E$, set $E^{(0)}$ to be the set of points with Lebesgue density $0$ for $E$, and $E^{(1)}$ the set of points of Lebesgue density $1$. The \emph{essential boundary} is defined by $\p^e E = \R^n \sm (E^{(0)} \cup E^{(1)})$, and if $E$ has finite perimeter, this agrees with $S_{1_E}$. The \emph{reduced boundary} $\partial^{*}E$ of a set of finite perimeter is the set of points whose blow-ups converge to half-spaces, or $J_{1_E}$. From Proposition \ref{prop:FedVol}, we have that $\cH^{n-1}(\p^e E\sm \p^* E)=0$.

Given two closed sets $A,B\ss U$ and an open set $V \cc U$, define the distance
\[
 d(A,B;V)= \sup\{d(x,A)|x\in V\cap B\} + \sup\{d(x,B)|x\in V\cap A\}.
\]
We say that a sequence of closed sets $A_k \rightarrow A$ \emph{locally on $U$ in Hausdorff topology} if $d(A_k,A;V)\rightarrow 0$ for every $V\cc U$. The useful fact about this definition is that given a family of closed sets in $U$, we may always find a sequence which converges to some closed set in this sense. If we have a sequence of closed sets $A_k$ such that $\cH^{n-1}\mres A_k \rightharpoonup \mu$ weakly-* as measures on $U$, and moreover
\begin{equation}\label{eq:uniformden}
 cr^{n-1}\leq \cH^{n-1}(B_r(x)\cap A_k) \leq Cr^{n-1}
\end{equation}
for each $B_r(x)\ss U$ with constants uniform in $x,r,$ and $k$, then $A_k \rightarrow \supp \m$ locally on $U$ in Hausdorff topology, and the limit will satisfy the same density estimate.

There are many equivalent definitions of uniform rectifiability (see \cite{DS}), but we select one which will help explain the one property we require of them. We say that a measurable function $\w:\R^{n-1}\rightarrow [0,\infty]$ is an $A_1$ weight, with constant $A$, if for every ball $B$ we have
\[
 \fint_{B}\w d\cL^{n-1} \leq A\w(x)\qquad \text{ for a.e. }x\in B.
\]
An important property of such weights, known as the reverse H\"older inequality, is that there is an exponent $p=p(A)>1$ such that
\[
 \1\fint_B \w^p d\cL^{n-1}\2^{\frac{1}{p}}\leq C(A) \fint_B \w d\cL^{n-1}.
\]
We say that a closed set $E\ss \R^n$ satisfying the density estimates \eqref{eq:uniformden} for $r\leq 1$ is \emph{uniformly rectifiable} with constant $A$ if there is a continuous map $z:\R^{n-1}\rightarrow \R^n$ and an $A_1$ weight $\w$ with constant $A$ such that the distributional derivative of $z$ is absolutely continuous,
\[
 |\n z|\leq \w^{\frac{1}{n-1}},
\]
Lebesgue-a.e., and
\[
 \int_{z^{-1}(B_r(x))}\w d\cL^{n-1} \leq A r^{n-1},
\]
such that $E\ss z(\R^n)$. Notice that any uniformly rectifiable set is rectifiable.
\begin{proposition}\label{prop:rectlimits}Let $A_k\ss \R^n$ be a sequence of closed sets converging locally in Hausdorff topology to a set $A$, and satisfying the density estimates \eqref{eq:uniformden}. Assume, moreover, that the $A_k$ are uniformly rectifiable with constant $C$. Then so is $A$.
\end{proposition}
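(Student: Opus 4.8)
My plan is to realize the uniform-rectifiability data for $A$ — a continuous map $z\colon\R^{n-1}\to\R^n$ and an $A_1$ weight $\w$ — as subsequential limits of the corresponding data $z_k,\w_k$ for the $A_k$, which carry a common constant $C$. After a harmless localization (replacing $U$ by a ball and $A$ by $A\cap\overline U$) I may assume $A\ss B_{R_0}$. The starting observation is that one is free to precompose each $z_k$ with a translation and a dilation of $\R^{n-1}$: this preserves every defining inequality with the same constants (a dilation of the domain only rescales $\w_k$ by a constant, leaving its $A_1$ constant unchanged). I use this freedom to normalize so that $z_k(0)\in A_k\cap B_{R_0+1}$ and $\fint_{B_1}\w_k=1$; the latter is possible for $k$ large because $\int_{\R^{n-1}}\w_k\ge\int_{\R^{n-1}}|\n z_k|^{n-1}\ge\cH^{n-1}(z_k(\R^{n-1}))\ge\cH^{n-1}(A_k\cap B_{R_0})$, which is bounded below once $A_k$ is Hausdorff-close to $A$. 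Now the key point: an $A_1$ weight with constant $C$ is doubling with a constant depending only on $C$ and $n$, so the normalization forces $\int_{B_R}\w_k\le c(R,C,n)$ for every $R$, uniformly in $k$. The reverse H\"older inequality upgrades this to a uniform bound on $\w_k$ in $L^p(B_R)$ for a fixed $p=p(C)>1$, hence a uniform bound on $\n z_k$ in $L^{p(n-1)}(B_R)$; since $p(n-1)>n-1$, Morrey's embedding makes $\{z_k\}$ equibounded and equicontinuous on every ball. Passing to a subsequence, $z_k\to z$ in $C^0_{\mathrm{loc}}(\R^{n-1};\R^n)$, $\n z_k\rightharpoonup\n z$ in $L^{p(n-1)}_{\mathrm{loc}}$, and $\w_k\rightharpoonup\w\ge0$ in $L^p_{\mathrm{loc}}$.

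The defining estimates then pass to $(z,\w)$ by soft arguments. The bound $|\n z|^{n-1}\le\w$ a.e.\ comes from weak lower semicontinuity of the convex functional $v\mapsto\int\phi\,|v|^{n-1}$ (convex since $n-1\ge1$), tested against nonnegative $\phi$, together with $\int\phi\w_k\to\int\phi\w$. For $\int_{z^{-1}(B_r(x))}\w\le Cr^{n-1}$, I use that $z^{-1}(B_s(x))$ is open and that uniform convergence gives $\overline{B_M}\cap z^{-1}(B_{r-\eta}(x))\ss z_k^{-1}(B_r(x))$ for $k$ large, so $\int_{\overline{B_M}\cap z^{-1}(B_{r-\eta}(x))}\w=\lim_k\int_{\overline{B_M}\cap z^{-1}(B_{r-\eta}(x))}\w_k\le Cr^{n-1}$, and letting $M\to\8,\ \eta\to0$ gives the claim. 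The one step needing a small idea is that $\w$ is again an $A_1$ weight with constant $C$: the $A_1$ bound is equivalent to $\fint_B\w\le C\cdot(\text{essential infimum of }\w\text{ over }B)$ for every ball $B$, and for any subball $B'\ss B$ one has $\fint_{B'}\w_k\ge(\text{essential infimum of }\w_k\text{ over }B)\ge\tfrac1C\fint_B\w_k$ (using $B'\ss B$ and the $A_1$ inequality for $B$); passing to the limit — all the averages are tested against bounded, compactly supported functions — yields $\fint_{B'}\w\ge\tfrac1C\fint_B\w$ for all $B'\ss B$, and Lebesgue differentiation then gives $\w\ge\tfrac1C\fint_B\w$ a.e.\ on $B$, which is exactly the $A_1$ bound.

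The remaining requirement, $A\ss z(\R^{n-1})$, is the step I expect to be genuinely delicate. Given $a\in A$, pick $a_k\in A_k$ with $a_k\to a$ and write $a_k=z_k(y_k)$; if the $y_k$ stay in a bounded set then a subsequence converges to some $y$ with $z(y)=a$. The trouble is that the $z_k$ need not be proper: although the area formula and the density lower bound in \eqref{eq:uniformden} force $\int_{z_k^{-1}(B_\rho(a_k))}\w_k\ge c\rho^{n-1}$, the preimage $z_k^{-1}(a_k)$ could consist entirely of points far out in $\R^{n-1}$, reached along thin ``tentacles'' carrying arbitrarily little weight, whose location need not be uniform in $k$. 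I would try to circumvent this by first replacing $z_k$ with a parametrization whose restriction to a fixed ball $B_M$, $M=M(C,n,R_0)$, already covers $A_k\cap B_{R_0+1}$: the normalized weights satisfy $\w_k(y)\ge c|y|^{-(n-1)}$ for all $y$ (again by $A_1$ together with $\fint_{B_1}\w_k=1$), which limits how far a substantial piece of $z_k^{-1}(B_{R_0+1})$ can reach, so the ``bulk'' that covers $A_k\cap B_{R_0+1}$ can be isolated inside a controlled ball and the rest discarded — but carrying out this modification cleanly and uniformly in $k$ is the crux. Alternatively, and more robustly, one can bypass the parametrizations entirely: uniform rectifiability of an Ahlfors-regular set is equivalent (see \cite{DS}) to a Carleson packing bound on the dyadic cubes at which the set fails to be close, in Hausdorff distance, to a hyperplane, and this kind of condition passes to Hausdorff limits — a cube that is $\e$-bad for $A$ is $\tfrac{\e}{2}$-bad for every $A_k$ with $k$ large depending on that cube, so restricting to finitely many scales, invoking the uniform packing bound for the $A_k$, and letting the smallest scale tend to zero shows $A$ satisfies the same packing bound. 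Since $A$ is Ahlfors regular by the hypotheses in \eqref{eq:uniformden}, this gives that $A$ is uniformly rectifiable.
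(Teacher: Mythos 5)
Your strategy is the same as the paper's, which offers only a sketch: pass the parametrization data $(z_k,\w_k)$ to a limit. You fill in several steps the paper leaves implicit, and those parts are correct. The translation/dilation normalization of the domain of $z_k$ is genuinely needed: the defining inequalities are invariant under those reparametrizations, so without fixing $z_k(0)$ and $\fint_{B_1}\w_k$ there is no reason for the $\w_k$ to be locally uniformly integrable, which the sketch simply asserts. Your verification that the limit weight is still $A_1$, via passing to the limit in $\fint_{B'}\w_k\geq C^{-1}\fint_B\w_k$ for $B'\ss B$ and then differentiating, is clean, as is obtaining $|\n z|^{n-1}\leq\w$ from weak lower semicontinuity.

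The step you flag as unresolved, namely $A\ss z(\R^{n-1})$, is also the weak point of the paper's sketch. After extracting $z_k(\R^{n-1})\to Q$ with $A\ss Q$ and $z_k\to z$ locally uniformly, the paper asserts $z(\R^{n-1})=Q$ without argument; but locally uniform convergence by itself does not rule out that a point $q\in Q$ is approximated only through $z_k(y_k)$ with $|y_k|\to\8$. So you have not introduced a new gap, but you have not closed the one already there. Your normalization gives $\w_k(y)\geq c|y|^{-(n-1)}$, which decays at exactly the borderline rate, so by itself it does not exclude the escaping-tentacle scenario; one would need a quantitative coverage statement (e.g.\ that, by the area formula and the lower density bound on $A_k$, a definite fraction of $\cH^{n-1}(A_k\cap B_{R_0})$ must be attained by $z_k$ restricted to a ball $B_{M(C,n,R_0)}$), and neither you nor the paper supplies it. Your fallback, replacing the parametrization argument by one of the Carleson-packing characterizations of uniform rectifiability in David--Semmes, is a genuinely different and more robust route: a single bad cube for $A$ is a bad cube for $A_k$ once $k$ is large, and the packing bound then transfers scale-by-scale. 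The paper does not take that route, but it is the one most likely to yield a fully detailed proof.
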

\begin{proof}[Sketch of proof]
 Let $z_k,\w_k$ be the associated parametrization to $A_k$. It is straightforward to check that each $z_k(\R^{n-1})$ is closed and satisfies density estimates \eqref{eq:uniformden} with constants depending only on $C$, so we may assume, passing to a subsequence, that they converge to a set $Q$ in Hausdorff topology. Clearly $A\ss Q$. From the reverse H\"older inequality, we have a uniform $W^{1,(n-1)p}_{loc}$ bound on $z_k$, so we have that $z_k \rightarrow z$ locally uniformly, and $z(\R^{n-1})=Q$. Moreover, the weights $\w_k$ are locally uniformly integrable, and so $\w_k \rightharpoonup \w$ weakly in $L^1_{loc}$, $|\n z|\leq \w^{\frac{1}{n-1}}$ a.e., and $\w$ is an $A_1$ weight. Finally,
\[
 \int_{z^{-1}(B_r(x))}\w d\cL^{n-1} \leq C r^{n-1}
\]
can be checked using the uniform convergence of $z_k$ to $z$, which implies that for $k$ large enough, $|z_k - z|\leq \e$ on $B_{\e^{-1}}$. Then $z_k(z^{-1}(B_r(x))\cap B_{\e^{-1}}) \ss B_{r+\e}(x)$, and hence
\begin{align*}
 \int_{z^{-1}(B_r(x))\cap B_{\e^{-1}}}\w d\cL^{n-1} &= \lim \int_{z^{-1}(B_r(x))\cap B_{\e^{-1}}}\w_k d\cL^{n-1}\\
 &\leq  \liminf  \int_{z_k^{-1}(B_{r+\e}(x))}\w_k d\cL^{n-1}\\
 &\leq C (r+\e)^{n-1}.
\end{align*}
Now send $\e\searrow 0$.
\end{proof}

\subsection{Definitions of Energy Minimizers}

We now fix an open set $\W \ss \R^n$. This set should be thought of as the ``boundary data'' for the problem, and examples we have in mind are where $\W$ is the interior or the exterior of a smooth compact hypersurface. To avoid technicalities, we will assume that $\Omega$ is bounded, but the the reader may check that nothing changes under weaker assumptions (such as $\W \ss \W'$, $\W'$ has finite perimeter, and $\cL^n(\W' \sm \W)<\8$).

The strong formulation of the problem is as follows: consider pairs $(A,u)$ of open sets with smooth boundaries $A$ and functions $u$ with $u-1\in H^1(A)$, where $\W \subset A$ and $u=1$ on $\W$. The task is to minimize
\[
 F(A,u)=\int_{A \sm \W} |\n u|^2 d\cL^n+\int_{\partial A}u^2 d\cH^{n-1}+\cL^n(A \sm \W)
\]
over all such pairs. It is obvious (using $\max\{0,\min\{1,u\}\}$ as a competitor) that it suffices to minimize over functions $u$ taking values in $[0,1]$.

It is unreasonable to actually attempt such a minimization directly, so we relax the problem to the space $SBV$. To do so, consider the extension of $u$ by $0$ to the entire $\R^n$. This is a function in $SBV$, and $S_u$ is the portion of the boundary of $A$ where $u\neq 0$. We may therefore attempt to minimize
\[
 F(u)=\int_{\R^n}|\n u|^2d\cL^n + \int_{J_u}\uu^2+\ud^2 d\cH^{n-1}+\cL^n(\{u>0\}\sm \W)
\]
over all functions in $SBV(\R^n)$ with $0\leq u\leq 1$ and $u=1$ on $\W$.

Any such function minimizing $F$ will be referred to as a \emph{minimizer}. Any such function with the property that $F(u)\leq F(u1_{A})$ for any set of finite perimeter $A$ containing $\W$ will be called an \emph{inward minimizer}. This is a much weaker property.

Finally, a function $u$ is said to be a \emph{local minimizer on a ball} $B_r(x)\ss \R^n \sm \W$ if $F(u)\leq F(v)$ for all $v\in SBV(\R^n)$ with $u-v$ supported on $B_r(x)$. Local minimality does not depend on $\W$, in the sense that a minimizer for any $\W$ will be a local minimizer on $B_r(x)$, provided the ball is outside of $\W$. 

\section{The Key Estimate}

The main difficulty in obtaining minimizers by the direct method is
an estimate on the $n-1$ dimensional measure of the jump set. In
this section we obtain this bound, not only as an a priori estimate
on minimizers, but also for inward minimizers
(which greatly simplifies the following section).

The following lemma is technical, showing that the coarea formula applies in a low-regularity situation.

\begin{lemma}\label{lem:coarea} Let $u\in BV(\R^n)$ be a function with $0\leq u \leq 1$. Then we have
\[
 \int_{0}^{1}P(\{u>s\};\R^n \sm S_u)ds=|Du|(\R^n \sm S_u).
\]
\end{lemma}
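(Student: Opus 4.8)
The plan is to apply the standard coarea formula for $BV$ functions, $|Du|(\R^n) = \int_0^1 P(\{u>s\};\R^n)\,ds$, and then carefully subtract the contribution of the singular set $S_u$ from both sides. The identity I want is the ``localized'' coarea formula $|Du|(E) = \int_0^1 P(\{u>s\};E)\,ds$ applied to the Borel set $E = \R^n\setminus S_u$; the only thing that needs checking is that this localized version is legitimate and that the right-hand side is exactly $\int_0^1 P(\{u>s\};\R^n\setminus S_u)\,ds$ with $P(\{u>s\};\cdot)$ meaning $|D1_{\{u>s\}}|$ restricted to $\R^n\setminus S_u$. Since the full coarea formula holds as an identity of measures, namely $|Du|\,\mres\, B = \int_0^1 |D1_{\{u>s\}}|\,\mres\, B\,ds$ for every Borel set $B$ (this follows from the measure version of coarea plus monotone convergence / Fubini-Tonelli for the measurable dependence on $s$), restricting to $B = \R^n\setminus S_u$ is immediate once measurability in $s$ is justified.

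First I would recall that by Proposition \ref{prop:FedVol} (Federer-Vol'pert), $S_u$ is $\cH^{n-1}$-rectifiable with $\cH^{n-1}(S_u\setminus J_u)=0$, and $Du\,\mres\, S_u = (\uu-\ud)\nu_x\,\cH^{n-1}\,\mres\, J_u$. The key structural fact is how the level sets $\{u>s\}$ interact with $S_u$: for $x\in J_u$ with $\ud(x) < \uu(x)$, the point $x$ belongs to the essential boundary $\partial^e\{u>s\}$ precisely when $\ud(x) < s < \uu(x)$ (up to the $\cH^{n-1}$-null set $S_u\setminus J_u$, and up to the ambiguous values $s=\ud(x),\uu(x)$ which matter for only countably many $s$, hence a Lebesgue-null set of $s$, for each fixed direction — more carefully, one uses Fubini on $J_u\times(0,1)$). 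Consequently, for $\cL^1$-a.e. $s$, $\cH^{n-1}(\partial^e\{u>s\}\cap S_u) = \cH^{n-1}(\{x\in J_u : \ud(x)<s<\uu(x)\})$, and integrating in $s$ gives $\int_0^1 \cH^{n-1}(\partial^*\{u>s\}\cap S_u)\,ds = \int_{J_u}(\uu-\ud)\,d\cH^{n-1} = |Du|(S_u)$ by the layer-cake formula. This is exactly the statement that the full coarea formula, restricted to $S_u$, reproduces the singular part of $Du$.

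Subtracting this from the global coarea identity $|Du|(\R^n) = \int_0^1 P(\{u>s\};\R^n)\,ds = \int_0^1\big[P(\{u>s\};\R^n\setminus S_u) + \cH^{n-1}(\partial^*\{u>s\}\cap S_u)\big]\,ds$ yields the claim. The main obstacle, and the reason the lemma is stated as ``technical,'' is the bookkeeping in the previous paragraph: one must verify that $\partial^*\{u>s\}\cap(\R^n\setminus S_u)$ carries exactly the ``diffuse plus Cantor-free'' part of $Du$, which requires knowing that a.e. level set $\{u>s\}$ has finite perimeter (true by coarea, since $\int_0^1 P(\{u>s\};\R^n)\,ds = |Du|(\R^n) < \infty$) and tracking which blow-up points of $u$ correspond to blow-up points of $1_{\{u>s\}}$ — at an approximate continuity point $x$ of $u$ with value $t$, $x$ is an approximate continuity point of $1_{\{u>s\}}$ for every $s\neq t$, so the absolutely continuous and (absent here) Cantor parts are distributed among the level sets away from $S_u$ in the usual way. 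The measurability of $s\mapsto P(\{u>s\};\R^n\setminus S_u)$ follows because $s\mapsto |D1_{\{u>s\}}|$ is a measurable family of measures (a general fact about coarea) and restriction to a fixed Borel set preserves measurability. Once these points are in place, the identity is just the difference of two applications of the classical coarea formula.
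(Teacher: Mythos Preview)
Your proposal is correct and follows essentially the same route as the paper: apply the global Fleming--Rishel coarea formula, reduce to showing $\int_0^1 P(\{u>s\};S_u)\,ds = |Du|(S_u)$, identify the right-hand side via Federer--Vol'pert as $\int_{J_u}(\uu-\ud)\,d\cH^{n-1}$, characterize $\partial^e\{u>s\}\cap J_u$ in terms of $\ud(x)<s<\uu(x)$, and integrate using Fubini. The paper handles the endpoint values $s=\ud,\uu$ by sandwiching $P(\{u>s\};S_u)$ between $\int_{J_u}1_{(\ud,\uu)}(s)\,d\cH^{n-1}$ and $\int_{J_u}1_{[\ud,\uu]}(s)\,d\cH^{n-1}$ and noting both integrate to the same thing, which is a slightly cleaner way of dealing with the ambiguity you flagged, but the argument is otherwise identical.
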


\begin{proof}
 From the Fleming-Rishel coarea formula for $BV$ functions \cite{FR}, we have
 \[
   \int_{0}^{1}P(\{u>s\})ds=|Du|(\R^n),
 \]
so it suffices to show
\[
  \int_{0}^{1}P(\{u>s\};S_u)ds=|Du|(S_u).
\]
Also note that it follows that the level sets $\{u>s\}$ have finite perimeter for $\cL^1$ a.e. $s$. Using the Federer-Vol'pert theorem, the right-hand side is given by
\[
 |Du|(S_u)=\int_{J_u}|\uu-\ud|d\cH^{n-1}.
\]
On the other hand, from the structure of sets of finite perimeter, for almost every $s$ we have
\[
 P(\{u>s\};S_u)=\cH^{n-1}(\partial^e \{u>s\}\cap J_u)=\cH^{n-1}(\partial^* \{u>s\}\cap J_u).
\]
Now notice that we have (from looking at the blow-ups)
\[
J_u \cap \{\ud< s<\uu\} \subset \partial^e \{u>s\}\cap J_u, \qquad \partial^* \{u>s\}\cap J_u \subset J_u \cap \{\ud\leq s \leq \uu\},
\]
which implies
\[
 \int_{J_u}1_{(\ud,\uu)}(s)d\cH^{n-1}\leq P(\{u>s\};S_u)\leq \int_{J_u}1_{[\ud,\uu]}(s)d\cH^{n-1}.
\]
We now integrate in $s$ and use Fubini's theorem to obtain
\[
 \int_{0}^1 \int_{J_u}1_{(\ud,\uu)}(s)d\cH^{n-1}ds=\int_{J_u}|\uu-\ud|d\cH^{n-1}=\int_9^1 \int_{J_u}1_{[\ud,\uu]}(s)d\cH^{n-1}ds,
\]
which gives the conclusion.
\end{proof}

Fix a ball $B_R(x)$ containing $\W$, and note that $u=1_{B_{R}(x)}$ is a valid competitor function in the minimization problem for $F$. We set $\bar{F} = F(1_{B_R(x)})$, and generally restrict our attention to admissible competitor functions $v$ with $F(v)\leq \bar{F}$ (minimizing $F$ over all such functions is equivalent to minimizing over all competitors).

\begin{theorem}\label{thm:lowerbd}
 Let $u$ be an inward minimizer with $F(u)\leq 2\bar{F}$. Then there is a value $\delta=\delta(\W)>0$ such that $\cL^n(\{0<u< \delta\})=0$.
\end{theorem}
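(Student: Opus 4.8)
The plan is to argue by a competitor construction: if the set $\{0 < u < \delta\}$ had positive measure for every small $\delta$, we could lower the energy by "cutting off" $u$ below level $\delta$, i.e. replacing $u$ by $0$ on $\{u < \delta\}$. This is an inward perturbation (it shrinks $\{u>0\}$), so it is a legitimate competitor against an inward minimizer. The change in energy has three contributions: the Dirichlet term can only decrease; the volume term decreases by $C_0 \cL^n(\{0 < u < \delta\})$; but the surface term may increase, because the new function has a jump of size roughly $\delta$ along (most of) the level set $\{u = \delta\}$, contributing about $h \delta^2 P(\{u > \delta\}; \R^n \setminus S_u)$, plus possibly a modified contribution where $\{u = \delta\}$ meets the old $S_u$. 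The key point is to show that for a well-chosen level $s \in (0,\delta)$, the perimeter $P(\{u > s\}; \R^n \setminus S_u)$ is controlled, so that the surface penalty is a higher-order quantity than the volume gain.

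First I would set up notation: for $s \in (0,1)$ let $E_s = \{u > s\}$ and let $g(s) = \cL^n(\{0 < u \le s\})$, which is nondecreasing, and suppose for contradiction that $g(s) > 0$ for all $s > 0$. By Lemma~\ref{lem:coarea}, $\int_0^1 P(E_s; \R^n \setminus S_u)\,ds = |Du|(\R^n \setminus S_u) = \int_{\R^n} |\nabla u|\,d\cL^n \le C(\bar F)$, using the $L^2$ bound on $\nabla u$ and $\cL^n(\{u>0\}) \le \bar F / C_0$ together with Cauchy--Schwarz. Hence for each $\delta$ there is a level $s = s(\delta) \in (\delta/2, \delta)$ with $P(E_s; \R^n \setminus S_u) \le \tfrac{2}{\delta} C(\bar F)$. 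Now I take as competitor $v = u\,1_{E_s}$ (extended by $0$); this is in $SBV$, equals $1$ on $\W$ provided $\delta < 1$, and since $E_s \supset \W$ it is an admissible inward competitor. Comparing $F(u)$ and $F(v)$: the Dirichlet energy drops (we deleted a region where $\nabla u$ was possibly nonzero), the volume term drops by exactly $C_0\, g(s) \ge C_0\, g(\delta/2)$, and the surface term changes by at most $h$ times the new jump contribution along the newly created boundary. On $\partial^* E_s \setminus S_u$ the trace of $v$ jumps from $\approx s \le \delta$ to $0$, contributing at most $h\,\delta^2 P(E_s; \R^n\setminus S_u) \le 2h\,\delta\, C(\bar F)$; on the part of $\partial^* E_s$ meeting $S_u$ one has to check the jump of $v$ there is bounded by the jump of $u$ plus $\delta$, so the surface term along $J_u$ increases by at most $\cH^{n-1}(J_u) \cdot O(\delta)$ — and $\cH^{n-1}(J_u)$ is a priori finite for an inward minimizer with $F(u) \le 2\bar F$. (If that a priori bound is what this very theorem is being used to establish, then one should instead bound this term by $\delta \cdot 2\bar F / (h\, \underline s^2)$ for levels $\underline s$ bounded below, or run the cutoff only where it strictly helps; I would need to be careful about the logical order here.)

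Putting this together, $0 \le F(v) - F(u) \le -C_0\, g(\delta/2) + C(\bar F, h)\,\delta$, so $g(\delta/2) \le C(\bar F, h, C_0)\,\delta$ for all small $\delta$. This already shows $g$ vanishes to first order at $0$, but not that it vanishes identically. To upgrade to $g \equiv 0$ near $0$ I would iterate: the estimate $P(E_s; \R^n\setminus S_u) \le \tfrac{2}{\delta}C(\bar F)$ is wasteful, and a sharper bound comes from localizing the coarea identity to $\{u < \delta\}$, giving $\int_0^\delta P(E_s; \{u < \delta\})\,ds \le \int_{\{u<\delta\}} |\nabla u|\,d\cL^n$, and then using an isoperimetric/Sobolev inequality on the region $\{0 < u < \delta\}$ to relate $\int |\nabla u|$ there to $g(\delta)^{(n-1)/n}$, closing a differential inequality of the form $g'(\delta) \gtrsim$ (something forcing $g(\delta) = 0$ for $\delta$ small).

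The main obstacle I expect is the surface term along the pre-existing jump set $S_u$: one must ensure that replacing $u$ by $u\,1_{E_s}$ does not create a large new contribution where the new "artificial" boundary $\{u = s\}$ runs alongside or crosses the old singular set $S_u$. Controlling this requires either an a priori bound on $\cH^{n-1}(S_u)$ (which risks circularity, depending on how this theorem feeds into the existence proof) or a more careful choice of the cutoff that only deletes the genuinely low-density, low-energy part of $\{0 < u < \delta\}$ and leaves $S_u$ untouched — for instance intersecting with the set where $u$ has small oscillation. Resolving this cleanly, and getting the constants to depend only on $\W$ (through $\bar F$) and not on $u$, is the crux of the argument.
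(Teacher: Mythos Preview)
Your competitor $v=u\,1_{\{u>s\}}$ is exactly the one the paper uses, and your diagnosis that a single truncation only yields $g(\delta)\le C\delta$ (not $g\equiv 0$) is correct. But two points prevent this from being a proof.

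First, your main worry is misplaced and in fact has the wrong sign. When you pass from $u$ to $u\,1_{\{u>t\}}$, the surface term along the pre-existing jump set $J_u$ \emph{never increases}: on $J_u\cap\{u>t\}^{(0)}$ it drops from $\uu^2+\ud^2$ to $0$, and on $J_u\cap\partial^*\{u>t\}$ it drops from $\uu^2+\ud^2$ to $\uu^2$. The only increase is on the freshly created boundary $\partial^*\{u>t\}\setminus S_u$, where it is at most $t^2 P(\{u>t\};\R^n\setminus S_u)$. So there is no circularity: no a priori bound on $\cH^{n-1}(J_u)$ is needed, and the minimality inequality reads
\[
\int_{\{u\le t\}\setminus S_u}|\nabla u|^2 + \cL^n(\{0<u\le t\}) + \int_{J_u\cap\{u>t\}^{(0)}}\uu^2+\ud^2 + \int_{J_u\cap\partial^*\{u>t\}}\ud^2 \ \le\ t^2 P(\{u>t\};\R^n\setminus S_u).
\]
All four terms on the left are nonnegative; this is the key structural observation.

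Second, the differential inequality you gesture at is the heart of the proof, and your sketch does not close it. The paper does not track $g(t)=\cL^n(\{0<u\le t\})$ directly; it tracks
\[
f(t)=\int_0^t sP(\{u>s\};\R^n\setminus S_u)\,ds=\int_{\{u\le t\}\setminus S_u} u|\nabla u|\,d\cL^n,
\]
so that the right-hand side above is exactly $tf'(t)$. One then bounds $f(t)$ by H\"older and the BV Sobolev inequality applied to $u^2 1_{\{u\le t\}}$, and the crucial point is that every ingredient of $|D(u^2 1_{\{u\le t\}})|$ --- the absolutely continuous part, the jump part on $J_u\cap\{u>t\}^{(0)}$ and on $J_u\cap\partial^*\{u>t\}$, and the jump along $\partial^*\{u>t\}\setminus S_u$ --- is controlled by $tf'(t)$ via the display above. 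This yields $f(t)\le C(n)(tf'(t))^{1+1/(2n)}$, hence $f'(t)/f(t)^{2n/(2n+1)}\ge c(n)/t$, and integrating from $t_0$ to $1$ produces a $\log(1/t_0)$ that forces $f(t_0)=0$ for $t_0$ below an explicit threshold depending only on $\bar F$. Your proposed route via $g$ and an isoperimetric inequality on $\{0<u<\delta\}$ does not obviously produce a closed inequality of this strength; the choice of $f$ (equivalently, applying coarea to $\tfrac12\min\{u,t\}^2$ rather than to $u$) is what makes the exponents line up.
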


\begin{proof}
 We apply Lemma \ref{lem:coarea} to the $BV$ function $\frac{1}{2}\min\{u,t\}^2$, obtaining
 \[
  \int_{\{u\leq t\}\sm S_u}u|\n u|d\cL^n = \int_{0}^t sP(\{u>s\}; \R^n \sm S_u)ds.
 \]
The left-hand side can be estimated using the Cauchy-Schwarz inequality to give
\[
 \int_{\{u\leq t\}\sm S_u}u|\n u|d\cL^n\leq \1\int u^2 d\cL^n\2^{1/2}\1\int |\n u|^2 d\cL^n\2^{1/2}\leq F(u)\leq 2\bar{F},
\]
so in particular both sides are finite and bounded in terms of $\W$ only.
Set
\[
 f(t)=\int_{0}^t sP(\{u>s\}; \R^n \sm S_u)ds.
\]
This is an absolutely continuous function, and we will show it satisfies a differential inequality. To that end, for $t$ with $\{u>t\}$ having finite perimeter, consider the competitor $u1_{\{u>t\}}$. This gives, from inward minimality and Proposition \ref{prop:FedVol}, that
\begin{align*}
 0&\leq F(u1_{\{u>t\}})-F(u)\\
&=\int_{\partial^*\{u>t\}\sm S_u}\uu^2 d\cH^{n-1}-\int_{\{u\leq t\}\sm S_u}|\n u|^2 d\cL^n - \cL^n(\{0<u\leq t\})\\
&\qquad -  \int_{J_u \cap \{u > t\}^{(0)}}\uu^2+\ud^2 d\cH^{n-1}  -   \int_{J_u \cap \p^* \{u > t\}}\ud^2 d\cH^{n-1}.
\end{align*}
Regrouping terms,
\begin{align*}
 \int_{J_u \cap \{u > t\}^{(0)}}\uu^2+\ud^2 d\cH^{n-1}&  +   \int_{J_u \cap \p^* \{u > t\}}\ud^2 d\cH^{n-1}
+\int_{\{u\leq t\}\sm S_u}|\n u|^2 d\cL^n + \cL^n(\{0<u\leq t\})\\
&\leq t^2 P(\{u>t\};\R^n \sm S_u) = tf'(t).
\end{align*}

We are now in a position to estimate $f$, using H\"older inequality and the $BV$ Sobolev inequality applied to the function $u^2 1_{u\leq t}$:
\begin{align*}
 f(t)&\leq \cL^n(\{u\leq t\})^{\frac{1}{2n}} \1\int_{\{u\leq t\}} u^{2^*} d\cL^n\2^{\frac{1}{2^*}}\1\int_{\{u\leq t\}} |\n u|^2 d\cL^n\2^{\frac{1}{2}}\\
 &\leq C(n)(tf'(t))^{\frac{1}{2}+\frac{1}{2n}}\1\int ud|D(u1_{u\leq t})|\2^{1/2}.
\end{align*}
Here $2^*=\frac{2n}{n-1}$. The absolutely continuous portion of the integral in the last factor is estimated by Cauchy-Schwarz, while the singular portion we compute directly:
\begin{align*}
 \int ud|D(u1_{u\leq t})|&\leq \sqrt{\cL^n(\{0<u\leq t\})\int_{\{u\leq t\}\sm S_u}|\n u|^2 d\cL^n} + \int_{J_u \cap \{u > t\}^{(0)}}\uu^2 + \ud^2 d\cH^{n-1}\\
& \qquad+\int_{J_u \cap \p^* \{u > t\}}\ud^2 d\cH^{n-1}+t^2 P(\{u>t\};\R^n \sm S_u)\\
&\leq 4tf'(t),
\end{align*}
as each of the terms appears in our previous estimate. We therefore arrive at
\[
 f(t)\leq C(n)(tf'(t))^{1+\frac{1}{2n}}.
\]
For any $t>t_0$ with $f(t_0)>0$, we may rewrite this as
\[
 \frac{f'(t)}{f(t)^{\frac{2n}{2n+1}}}\geq \frac{c(n)}{t}
\]
and then integrate from $t_0$ to $1$. This gives
\[
 f(1)^{\frac{1}{1+2n}}-f(t_0)^{\frac{1}{1+2n}}\geq c(n)\log\frac{1}{t_0},
\]
or
\[
 f(t_0)^{\frac{1}{1+2n}}\leq f(1)^{\frac{1}{1+2n}}+c(n)\log(t_0)\leq (2\bar{F})^{\frac{1}{1+2n}}+c(n)\log(t_0).
\]
Now if $t_0<\delta=\exp (-\bar{F}^{\frac{1}{1+2n}}/c(n))$, we obtain $f(t_0)<0$, which is a contradiction. It must therefore be the case that $f(t)=0$ for $t\leq \delta$, from which it follows that $\cL^n(\{0<u\leq \delta\})=0$.
\end{proof}

\begin{corollary}\label{cor:radius}
There is a  value $0<\d_0=\d_0(\W)\leq\d$ such that any inward minimizer $u$ with $F(u)\leq 2\bar{F}$ is supported on $B_{1/\d_0}$. Moreover, for any $B_r(x)\ss \R^n \sm \W$, we have
\[
 \cH^{n-1}(J_u\cap B_r(x))\leq C(n,\W)r^{n-1},
\]
while if $x\in K_u$, we also have
\[
 \cL^n(B_r(x)\cap \{u>0\})\geq c(n,\W)r^n.
\]
\end{corollary}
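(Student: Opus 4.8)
The plan is to derive all three conclusions of Corollary \ref{cor:radius} from Theorem \ref{thm:lowerbd} together with the inward minimality (and for the last two, local minimality on balls outside $\W$). The unifying device is the key estimate $\cL^n(\{0<u<\d\})=0$, which tells us that wherever $u$ is positive, it is bounded below by $\d$, and hence $\cL^n(\{u>0\})$ is controlled: since $\int_{J_u}\uu^2+\ud^2\,d\cH^{n-1}$ and $\cL^n(\{u>0\})$ both appear in $F(u)\le 2\bar F$, and on $J_u$ at least one of $\uu,\ud$ is $\ge\d$ wherever it is positive, we immediately get $\cH^{n-1}(J_u)\le C(n,\W)$ as a \emph{global} a priori bound. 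This is the ``main difficulty'' referred to at the start of Section 3, and it is really already done; the corollary is a packaging of its consequences.

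First, for the support statement: let $E=\{u>0\}$. Outside a large ball $B_R(x)\supset\W$, $u$ is a local minimizer, so for any ball $B_r(y)$ disjoint from $\W$ I would compare $u$ with $u\,1_{\R^n\sm B_r(y)}$ (equivalently, cut out $E\cap B_r(y)$). By inward minimality this is admissible, and the energy change is $-\int_{B_r(y)}|\n u|^2 - C_0\cL^n(E\cap B_r(y)) - (\text{jump terms inside})\le -C_0\cL^n(E\cap B_r(y)) + (\text{surface gain from the new piece of }\p B_r(y))$. On the new boundary piece $\p B_r(y)\cap E^{(1)}$ the trace of $u$ is at least $\d$, but actually the cleaner route is a relative-isoperimetric / density argument: if $\cL^n(E\cap B_r(y))>0$ for $y$ far from $\W$, then either $E$ has a lot of perimeter near $y$ (contradicting the global bound $\cH^{n-1}(J_u)\le C$ once $r$ is large, since a set of positive measure contained in $\R^n\sm B_\rho$ for all large $\rho$ is forced to have unbounded essential boundary unless it is empty), or we can cut it out and strictly decrease $F$. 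More concretely: let $\rho_0=\sup\{|y-x|: y\in E^{(1)}\}$; if $\rho_0<\8$ we are done with $\d_0=1/\rho_0$ up to adjusting for $\W$, and if $\rho_0=\8$ then $E$ has unbounded diameter while $\cL^n(E)\le 2\bar F/C_0<\8$, so there is a radius $\rho$ with $\cH^{n-1}(\p^* E\cap \p B_\rho)$ small but $\cL^n(E\sm B_\rho)>0$; cutting out $E\sm B_\rho$ and comparing (the trace on the sphere contributes $\le h\cH^{n-1}(\p^* E\cap\p B_\rho)$, and if $u<1$ everywhere outside $\W$ — which we may assume after truncation, and in fact follows since $\W$ is bounded and inside $B_R$ — this surface term is beaten by the volume and gradient savings) shows $F$ strictly decreases, a contradiction. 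Choosing $\rho$ via the coarea formula ($\int \cH^{n-1}(\p^*E\cap\p B_\rho)\,d\rho = P(E;\R^n\sm \bar B_R)<\8$, so these are small for a sequence of large $\rho$) makes this rigorous. Set $\d_0$ so that $B_{1/\d_0}\supset B_R(x)$ and also $\supp u\subset B_{1/\d_0}$.

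Second, the upper density bound for $J_u$: fix $B_r(y)\subset\R^n\sm\W$ and compare $u$ with $u\,1_{\R^n\sm B_r(y)}$. Local minimality gives
\[
 \int_{J_u\cap B_r(y)}\uu^2+\ud^2\,d\cH^{n-1}\;\le\; \int_{\p B_r(y)\cap \{u>0\}^{(1)}}(\text{trace of }u)^2\,d\cH^{n-1}\;+\;(\text{terms that only help})\;\le\; \cH^{n-1}(\p B_r(y)) = C(n)r^{n-1},
\]
using $0\le u\le 1$. Since the key estimate forces $\uu^2+\ud^2\ge\d^2$ on $J_u$ (at least one of the one-sided traces is positive on the jump set, hence $\ge\d$), we get $\cH^{n-1}(J_u\cap B_r(y))\le \d^{-2}C(n)r^{n-1}=C(n,\W)r^{n-1}$. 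Third, the lower volume density at points of $K_u=\bar S_u$: suppose $x\in K_u$ and $\cL^n(B_r(x)\cap\{u>0\})$ is tiny compared to $r^n$; then by the relative isoperimetric inequality $P(\{u>0\};B_r(x))$ is much smaller than $r^{n-1}$ times a constant, while the energy cut-out competitor $u\,1_{\R^n\sm(B_r(x)\cap\{u>0\})}$ removes the jump set inside $B_r(x)$; by $\cH^{n-1}$-rectifiability and since $x\in\bar S_u$, there is positive $\cH^{n-1}(J_u\cap B_{r}(x))$ for all $r$, but cutting out $\{u>0\}\cap B_r$ creates new boundary only of size $P(\{u>0\};B_r)\ll r^{n-1}$, so for small $\cL^n(B_r(x)\cap\{u>0\})$ the competitor strictly lowers $F$ (the saved jump energy $\ge \d^2\cH^{n-1}(J_u\cap B_{r/2})$ exceeds the created $\le C\,P(\{u>0\};B_r)$), contradiction; hence $\cL^n(B_r(x)\cap\{u>0\})\ge c(n,\W)r^n$. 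I expect the \textbf{support statement} to be the only genuinely non-routine point: it is the only place where the global structure and the boundedness of $\W$ are essential rather than the purely local cut-out argument, and getting the right radius $\rho$ on which the spherical trace term is controlled requires the coarea-formula selection together with the truncation $u\le 1$ to ensure the surface cost of the new sphere piece is actually beaten — everything else is the standard Mumford–Shah-type cutting argument combined with the $\d$-lower bound.
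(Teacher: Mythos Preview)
Your upper density bound for $J_u$ is correct and matches the paper's argument exactly: cut out $B_r(y)$, bound the new surface term by $C(n)r^{n-1}$ using $u\le 1$, and divide by $\d^2$.

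The lower volume density argument, however, has a genuine gap. You write that if $\cL^n(B_r(x)\cap\{u>0\})$ is tiny, then ``by the relative isoperimetric inequality $P(\{u>0\};B_r(x))$ is much smaller than $r^{n-1}$.'' The relative isoperimetric inequality goes the other way: small perimeter forces one of the two volumes to be small, not vice versa (a union of many tiny balls has small volume and huge perimeter). Your subsequent ``saved versus created'' comparison also cannot close: the saved quantity is $\d^2\cH^{n-1}(J_u\cap B_r)$, for which you have \emph{no} lower bound at this stage (that is Theorem \ref{thm:density}, proved later), and the created boundary is on $\partial B_r\cap\{u>0\}^{(1)}$, not $P(\{u>0\};B_r)$. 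The paper instead runs an ODE argument: apply the isoperimetric inequality to $E_r=B_r(x)\cap\{u>0\}$, then use the cut-out competitor to show that the interior part of $P(E_r)$ (namely $\cH^{n-1}(J_u\cap B_r)$) is controlled by the spherical part $P(B_r;\{u>0\}^{(1)})$. This gives
\[
\cL^n(E_r)\le C(n,\W)\,P(B_r;\{u>0\}^{(1)})^{\frac{n}{n-1}}=C(n,\W)\bigl(\partial_r\cL^n(E_r)\bigr)^{\frac{n}{n-1}},
\]
and since $x\in K_u$ forces $\cL^n(E_r)>0$ for all $r>0$, integrating this differential inequality yields $\cL^n(E_r)\ge c(n,\W)r^n$.

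This also means your ordering is backwards. You flag the support statement as ``the only genuinely non-routine point'' and propose a coarea-selection argument to find a good radius to cut. In the paper it is the \emph{easiest} step and comes last: once the lower volume density is in hand, any $x\in K_u$ at distance $r\ge 1/\d_0$ from $\W$ gives $\cL^n(\{u>0\}\sm\W)\ge c(n,\W)\d_0^{-n}$, while $\cL^n(\{u>0\}\sm\W)\le F(u)\le 2\bar F$; choosing $\d_0$ small yields a contradiction in one line. Your coarea approach can be made to work, but it is unnecessary.
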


\begin{proof}
 The main observation is that for an inward minimizer, Theorem \ref{thm:lowerbd} implies that
 \[
  \int_{J_u\cap B_r(x)}\uu^2+\ud^2 d\cH^{n-1}\geq \d^2 \cH^{n-1}(J_u\cap B_r(x)),
 \]
 while by using $u1_{\R^n \sm B_r(x)}$ as a competitor, the left quantity is controlled by $P(B_r(x);\{u>0\}^{(1)})\leq C(n)r^{n-1}$.

 The other points now follow in a standard way. We have that, for almost every $r$,
 \[
  \cL^n(B_r(x)\cap \{u>0\}^{(1)})\leq C(n)P(B_r(x)\cap \{u>0\}^{(1)})^{\frac{n}{n-1}}\leq C(n,\W)P(B_r(x);\{u>0\}^{(1)})^{\frac{n}{n-1}}.
 \]
The assumption that $x\in K_u$ guarantees that the left-hand side is nonzero, and so by recognizing that  the left side is absolutely continuous as a function of $r$ and $P(B_r(x);\{u>0\}^{(1)})=\partial_r \cL^n(B_r(x)\cap \{u>0\}^{(1)})$, we may integrate this differential inequality to reach the conclusion.

Finally, if there is a point $x\in K_u$ at least a distance of $1/\d_0$ from $\W$, we obtain that
\[
 c(n,\W)\d_0^{-n} \leq \cL^n(\{u>0\}\sm \W)\leq F(u)\leq 2\bar{F},
\]
which is a contradiction for $\d_0$ too small.
\end{proof}

\section{Existence of Minimizers}

In this section we use the key estimate to apply the direct method
to our problem. Let 
\[
H_a=\{u\in SBV \mid u(x)=1 \forall x\in \W, u(x)\in \{0\}\cup [a,1]\text{ for } \cL^n \text{ a.e. } x, \supp u \ss B_{1/a}\}.                     
\]
With this notation, we are searching for a minimizer of $F$ amongst all of $H_0$. The following lemma says that it is equivalent to minimize over $H_a$ for small positive $a$ instead.

\begin{lemma}\label{lem:wkmin}
 A function $u\in H_0$ is a minimizer if and only if $F(u)\leq F(v)$ for all $v\in H_{\delta_0}$, where $\delta_0$ is the constant in Theorem \ref{thm:lowerbd}.
\end{lemma}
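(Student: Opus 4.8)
The plan is to establish the two directions separately, with the substantive content lying in the ``if'' direction. For the ``only if'' direction: suppose $u\in H_0$ is a minimizer. We first note that $F(u)\leq \bar F < 2\bar F$, so $u$ is in particular an inward minimizer with $F(u)\leq 2\bar F$, and hence by Theorem \ref{thm:lowerbd} and Corollary \ref{cor:radius} we have $u(x)\in\{0\}\cup[\delta,1]$ a.e.\ and $\supp u \ss B_{1/\delta_0}$. Since $\delta\geq \delta_0$, the sublevel condition $u(x)\in\{0\}\cup[\delta_0,1]$ also holds, so in fact $u\in H_{\delta_0}$. As $H_{\delta_0}\ss H_0$, minimality over $H_0$ immediately gives $F(u)\leq F(v)$ for all $v\in H_{\delta_0}$.

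For the ``if'' direction, suppose $u\in H_0$ satisfies $F(u)\leq F(v)$ for all $v\in H_{\delta_0}$; we must upgrade this to minimality over all of $H_0$. The key point is that it suffices to show $\inf_{H_0} F = \inf_{H_{\delta_0}} F$, because then $F(u) \leq \inf_{H_{\delta_0}} F = \inf_{H_0} F$ forces $u$ to be a minimizer over $H_0$ (and incidentally shows $u\in H_{\delta_0}$ by taking $v=u$ restricted appropriately, or rather by the argument below applied to $u$ itself). So fix an arbitrary $w\in H_0$; we want to produce $\tilde w\in H_{\delta_0}$ with $F(\tilde w)\leq F(w)$. We may assume $F(w)\leq \bar F$, since otherwise $w=1_{B_R(x)}\in H_{\delta_0}$ already does better. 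Now the crucial observation is that $w$, being a global minimizer candidate with $F(w)\leq \bar F \leq 2\bar F$ — wait, this requires $w$ to be an inward minimizer, which an arbitrary competitor need not be.

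To handle this, I would instead run the following extraction argument. Given $w\in H_0$ with $F(w)\leq \bar F$, apply the direct method \emph{within} $H_0$ restricted to functions with energy at most $F(w)$: by Corollary \ref{cor:radius}'s perimeter bound — which does require inward minimality — this does not directly apply, so instead I would argue as follows. Either $w$ is itself an inward minimizer, in which case Theorem \ref{thm:lowerbd} and Corollary \ref{cor:radius} give $w\in H_{\delta_0}$ directly and we are done with $\tilde w = w$; or $w$ is not an inward minimizer, meaning there is a set of finite perimeter $A\supset\W$ with $F(w1_A) < F(w)$. Replacing $w$ by $w1_A$ strictly decreases the energy. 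Iterating and taking a limit: let $m = \inf\{F(v) : v\in H_0,\ F(v)\leq F(w)\}$ and pick a minimizing sequence $v_j$. By the truncation remark ($\max\{0,\min\{1,v_j\}\}$) we keep $0\leq v_j\leq 1$; the bounds $\int|\n v_j|^2 \leq F(v_j)\leq F(w)$, $\|v_j\|_{L^\infty}\leq 1$, and $\mathcal H^{n-1}(J_{v_j})\leq \delta_0^{-2}F(v_j)$... again needs the lower bound.

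The honest route, and the one I expect the authors take, is: the infimum over $H_0$ is attained (this is essentially Theorem 1, proved in Section 4 using this very lemma, so there is a mild bootstrap — more precisely, Section 4 will first establish a minimizer $u_*$ exists by the direct method over $H_{\delta_0}$, using that $H_{\delta_0}$ has the compactness from Proposition \ref{prop:SBVcpt} because of the uniform $\delta_0$ lower bound controlling $\mathcal H^{n-1}(J_{v})$). Granting that a minimizer $u_*$ over $H_{\delta_0}$ exists, it is automatically an inward minimizer with $F(u_*)\leq \bar F\leq 2\bar F$ (inward competitors $u_* 1_A$ can be truncated and capped into $H_0$, and then one checks $u_* 1_A$ is comparable to an $H_{\delta_0}$ function, or one applies Theorem \ref{thm:lowerbd} which only needs inward minimality among \emph{all} of $H_0$ — note Theorem \ref{thm:lowerbd} is stated for inward minimizers without reference to $H_a$). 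Hence $u_*\in H_{\delta_0}$ automatically, and for any $w\in H_0$, $F(u_*)\leq F(w)$ would follow \emph{if} $u_*$ minimized over $H_0$ — which is the circularity. I expect the main obstacle, and the place to be careful, is exactly this: the clean statement is that \emph{any} inward minimizer with controlled energy lies in $H_{\delta_0}$, so given $w\in H_0$ one shows $w$ can be modified by the inward operations $w\mapsto w1_A$ to decrease energy until reaching an inward minimizer, which then lies in $H_{\delta_0}$; this shows $\inf_{H_0}F \geq \inf_{H_{\delta_0}}F$, and the reverse inequality is trivial from $H_{\delta_0}\ss H_0$. The technical content is verifying the ``decrease until inward minimal'' step converges, which follows from lower semicontinuity (Proposition \ref{prop:SBVcpt}) together with the a priori bound $\mathcal H^{n-1}(J_v)\leq C(n,\W)$ that Corollary \ref{cor:radius} supplies \emph{along} such a sequence (each intermediate function, being obtained by an inward truncation from the previous, inherits enough structure to apply the corollary's perimeter estimate). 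Once the infima agree, the equivalence in the lemma is immediate.
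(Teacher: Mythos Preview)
Your proposal contains a genuine gap in the ``if'' direction. The step you describe as ``decrease until inward minimal'' does not go through: you want to iterate $w\mapsto w1_A$ and pass to a limit, but passing to a limit in $SBV$ via Proposition~\ref{prop:SBVcpt} requires a uniform bound on $\cH^{n-1}(J_{v_j})$ along the sequence. You invoke Corollary~\ref{cor:radius} for this, but that corollary applies only to inward minimizers, and your intermediate functions are precisely \emph{not} inward minimizers (that is why you are still iterating). No perimeter bound is available for a generic $w\in H_0$ or its inward truncations, so the compactness step fails and the limit need not exist in $SBV$. The attempt to bootstrap through the existence theorem is, as you yourself note, circular.

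The paper resolves this with an extra idea you are missing: before searching for an inward-minimal truncation, first truncate $v$ to $v_{a}=v\,1_{\{v>a\}\cap B_{1/a}}$ at a carefully chosen small level $a>0$, using a coarea/Chebyshev argument to find $a_k\searrow 0$ with $F(v_{a_k})\leq F(v)+1/k$. The point is that $v_{a_k}$ is bounded below by $a_k$ on its support, so for any set $A$ one has $F(v_{a_k}1_A)\geq a_k^2\, P(A)$. This a priori perimeter control (with constant depending on $k$, which is fine) lets one minimize $A\mapsto F(v_{a_k}1_A)$ over sets of finite perimeter via the compactness theorem for such sets, producing a minimizer $A^{(k)}$. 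The function $w_k=v_{a_k}1_{A^{(k)}}$ is then an inward minimizer by construction, with $F(w_k)\leq F(v)+1/k\leq 2\bar F$, so Theorem~\ref{thm:lowerbd} and Corollary~\ref{cor:radius} place $w_k\in H_{\delta_0}$. Now $F(u)\leq F(w_k)\leq F(v)+1/k$ and one sends $k\to\infty$. The key missing ingredient in your argument is this preliminary level-set truncation, which manufactures the perimeter bound needed for compactness without assuming inward minimality.
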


\begin{proof}
 The only if implication is trivial.
 Consider any function $v\in H_0$ with $F(v)\leq \bar{F}$, and a truncation $v_a = v1_{\{v\leq a\}\cap B_{1/a}}$ where $a$ is a small positive number. Estimating crudely,
 \[
  |F(v_a)-F(v)|\leq a^2 (P(\{v>a\})+\cH^{n-1}(\partial B_{1/a} \cap {\vu \geq a})).
 \]
On the other hand, integrating in polar coordinates, using Chebyshev's inequality and the Fleming-Rishel coarea formula,
\[
\int_{1/t}^\infty \frac{1}{r}\cH^{n-1}(\{v\geq 1/r\})dr + \int_0^t P(\{v>a\})da=\|v\|_{L^1}+ |Dv|(\{u\leq t\})\leq C(v),
\]
the constant being the $BV$ norm. Changing variables in the first integral, we see that
\[
 \int_0^t \frac{1}{a}\cH^{n-1}(\{v\geq a\})+ P(\{v>a\})da\leq C(v).
\]
In particular, for any $t\in (0,1)$ there is some $a<t$ with
\[
\cH^{n-1}(\{v\geq a\})+ P(\{v>a\})\leq C(v)/t\leq C(v)/a.
\]
We may therefore find a sequence $a_k\searrow 0$ with
\[
 F(v_{a_k})\leq F(v)+\frac{1}{k}.
\]

Now consider the following auxiliary minimization problem for sets of finite perimeter $A$ containing $\W$ and contained in $\{v_{a_k}>0\}$: minimize
\[
 G_k(A)=F(v_{a_k} 1_A).
\]
Let $\{A_j\}_{j}$ be a minimizing sequence. Clearly these sets have equibounded diameter, being contained in $B_{1/a_k}$. Now observe that
\[
 G_k(A)=\cL^n (A\sm \W)+\int_A |\n v|^2 d\cL^n + \int_{J_{1_A v_{a_k}}}(\overline{v_{a_k}1_A})^2+(\underline{v_{a_k}1_A})^2 d\cH^{n-1}, 
\]
and the last term bounds $a_k^2 \cH^{n-1}(J_{1_A v_{a_k}})\geq  a_k^2 P(A)$. We therefore infer from the compactness of sets of finite perimeter that along a subsequence, $A_j\rightarrow A^{(k)}$ in $L^1$ and that the functions $v_{a_k}1_{A_j}\rightarrow w_k=v_{a_k}1_{A^{(k)}}$. From Proposition \ref{prop:SBVcpt}, we then have that $F$ is lower semicontinuous along the subsequence, and
\[
 F(w_k)\leq \inf_{\W\ss A\ss \{v_{a_k}>0\}} F(v_{a_k}1_A)\leq F(v_{a_k})\leq F(v)+\frac{1}{k}.
\]

The above inequality implies that the function $w_k$ is an inward minimizer. Applying Theorem \ref{thm:lowerbd} and Corollary \ref{cor:radius}, we get that $w_k\in H_{\d_0},$ and hence a valid competitor for $u$. This gives
\[
 F(u)\leq F(w_k)\leq F(v)+\frac{1}{k};
\]
sending $k\rightarrow \8$ completes the proof.
\end{proof}

\begin{theorem}
 For any $\W$, there exists a minimizer $u$. Moreover, $u\in H_{\d_0}$. 
\end{theorem}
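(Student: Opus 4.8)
The plan is to apply the direct method of the calculus of variations, but in the restricted class $H_{\d_0}$ rather than $H_0$; this is legitimate precisely because of Lemma \ref{lem:wkmin}. First I would observe (shrinking $\d_0=\d_0(\W)$ if necessary, which is harmless) that the fixed competitor $1_{B_R(x)}$ used to define $\bar F$ lies in $H_{\d_0}$, so that $H_{\d_0}\neq \emptyset$ and $m:=\inf\{F(v)\mid v\in H_{\d_0}\}\leq \bar F<\8$. Pick a minimizing sequence $\{u_k\}\ss H_{\d_0}$ with $F(u_k)\to m$; we may assume $F(u_k)\leq \bar F+1$ for every $k$, and all the $u_k$ are supported in $B_{1/\d_0}$.

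Next I would check the hypotheses of Proposition \ref{prop:SBVcpt}(i) on the ball $B_{2/\d_0}$. From the definition of $F$ we have $\|u_k\|_{L^\8}\leq 1$ and $\int|\n u_k|^2\,d\cL^n\leq F(u_k)\leq \bar F+1$. For the jump set, the crucial point is that working in $H_{\d_0}$ gives a lower bound: since $u_k\in\{0\}\cup[\d_0,1]$ $\cL^n$-a.e., at every jump point one has $\uu_k\geq \d_0$, hence
\[
 \d_0^2\,\cH^{n-1}(J_{u_k})\leq \int_{J_{u_k}}\uu_k^2+\ud_k^2\,d\cH^{n-1}\leq F(u_k)\leq \bar F+1,
\]
which bounds $\cH^{n-1}(J_{u_k})$ uniformly. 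Proposition \ref{prop:SBVcpt} then yields a subsequence (not relabelled) with $u_k\to u$ in $L^1$ and $u\in SBV$; passing to a further subsequence we may also assume $u_k\to u$ pointwise $\cL^n$-a.e.

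I would then verify that $u\in H_{\d_0}$. The $\cL^n$-a.e. convergence gives $0\leq u\leq 1$, $u=1$ a.e.\ on $\W$, and $u(x)\in\{0\}\cup[\d_0,1]$ for a.e.\ $x$ (the target set being closed), while $\supp u\ss B_{1/\d_0}$ since all the $u_k$ vanish a.e.\ outside this set. Finally I would establish $F(u)\leq \liminf_k F(u_k)=m$. The Dirichlet term and the surface term $\int_{J_u}\uu^2+\ud^2$ are lower semicontinuous along the subsequence by Proposition \ref{prop:SBVcpt}(ii); for the volume term, pointwise a.e.\ convergence gives $1_{\{u>0\}}\leq \liminf_k 1_{\{u_k>0\}}$ a.e., so Fatou's lemma yields $\cL^n(\{u>0\}\sm\W)\leq \liminf_k \cL^n(\{u_k>0\}\sm\W)$. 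Adding these, $F(u)\leq m$, so $u$ minimizes $F$ over $H_{\d_0}$; by Lemma \ref{lem:wkmin}, $u$ is a minimizer. The assertion $u\in H_{\d_0}$ is already part of the construction (and in any case follows from Theorem \ref{thm:lowerbd} and Corollary \ref{cor:radius}, since a minimizer is in particular an inward minimizer with $F(u)\leq \bar F\leq 2\bar F$).

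I do not anticipate a serious obstacle here: the entire difficulty of the problem has been absorbed into the a priori lower bound (Theorem \ref{thm:lowerbd}) and its corollaries, which is what permits minimizing over $H_{\d_0}$ in the first place. The only steps needing genuine care are the uniform bound on $\cH^{n-1}(J_{u_k})$, which fails in $H_0$ and is exactly where the restriction to $H_{\d_0}$ is used, and the lower semicontinuity of the volume penalization, both handled above.
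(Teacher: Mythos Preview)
Your proposal is correct and follows essentially the same approach as the paper: reduce via Lemma~\ref{lem:wkmin} to minimizing over $H_{\d_0}$, use the lower bound $\uu_k\geq\d_0$ on $J_{u_k}$ to control $\cH^{n-1}(J_{u_k})$, and apply Proposition~\ref{prop:SBVcpt} for compactness and lower semicontinuity. You supply a few details the paper leaves implicit (the Fatou argument for the volume term, the verification that the limit lies in $H_{\d_0}$), but the strategy is identical.
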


\begin{proof}
 By Lemma \ref{lem:wkmin}, it suffices to find a minimizer over $H_{\d_0}$  instead. Let $u_k\in H_{\d_0}$ be a minimizing sequence for $F$. It follows that $\uu_k(x)\geq {\d_0}$ for every $x\in J_{u_k}$, and so we have
 \[
  \d_0^2\cH^n(J_{u_k})+\int|\n u_k|^2 d\cL^n\leq F(u_k)\leq C(\W).
 \]
 It follows that the assumptions of Proposition \ref{prop:SBVcpt} are satisfied, and so $u_k\rightarrow u\in H_{\d_0}$ in $L^1$ with
 \[
  F(u)\leq \liminf_k F(u_k).
 \]
Therefore $u$ is a minimizer, and the proof is complete.
\end{proof}

\section{Density Properties}

Here we establish two density properties of minimizers: the uniform
bounds on the density of perimeter (``Ahlfors regularity''), and
the uniform positive density of each local component of the positive
phase. The first, while somewhat deeper, is well-known in this kind
of problem, and indeed parallels the very general statement
in the book of Guy David \cite{D}. The second is an elementary corollary, but
is quite specific to our situation. For convenience, we will use the notation
\[
 F(u;E)= \int_{E}|\n u|^2 d\cL^n + \int_{J_u \cap E} \uu^2 +\ud^2 d\cH^{n-1} + \cL^n (\{u>0\}\cap E)
\]

\begin{theorem}\label{thm:density}
 Let  $u\in H_{s}$ be a minimizer. Then there is a constant $0<\d_1=\d_1(\W)$ such that for any $x\in K_u$ and $B_r(x)\ss \R^n\sm \W$,
 \[
  \cH^{n-1}(J_u \cap B_r(x))\geq \d_1 r^{n-1}.
 \]
\end{theorem}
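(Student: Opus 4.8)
The plan is to establish the lower density bound for $\cH^{n-1}(J_u \cap B_r(x))$ by a standard decay-dichotomy argument adapted from the Mumford--Shah theory (cf. \cite{DCL}). The key quantity to control is the normalized energy $\omega(x,r) = r^{1-n} F(u; B_r(x))$, and the goal is to show that if the jump set has too little measure in $B_r(x)$, then the whole energy in a smaller ball must decay geometrically, which eventually forces $u$ to be (close to) a harmonic function with no jump set in a neighborhood of $x$, contradicting $x \in K_u = \bar S_u$.

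\textbf{Step 1: Competitor construction.} Fix $x \in K_u$ and a good radius $r$ with $B_r(x) \ss \R^n \sm \W$. The main tool is to build a competitor $v$ that agrees with $u$ outside $B_r(x)$ and has controlled energy inside. The natural choice is to replace $u$ on $B_{r/2}(x)$ by the harmonic extension of its trace on $\partial B_{r/2}(x)$ (or a slightly smaller sphere chosen by a mean-value/slicing argument so that the trace is well-behaved and $\cH^{n-2}(J_u \cap \partial B_\rho(x))$ is controlled). If we additionally fill in the region where $v$ would drop below $\d_0$ — using Theorem~\ref{thm:lowerbd} and the structure $u \in \{0\}\cup[\d_0,1]$ — we either keep $v$ harmonic and positive in $B_{r/2}(x)$, contributing no jump term, or we must "pay" for a small sphere of jump. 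Comparing $F(u) \le F(v)$ and using that $v$ introduces at most $\cH^{n-1}(\partial B_{r/2}(x)) \le C r^{n-1}$ of new jump set, one extracts the inequality
\[
 F(u; B_{r/2}(x)) \le C\Big( \cH^{n-2}(J_u \cap \partial B_\rho(x)) \cdot \rho + \big(\text{harmonic energy of the trace}\big) + r^n\Big).
\]

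\textbf{Step 2: Decay of energy under a smallness assumption.} Suppose for contradiction that $\cH^{n-1}(J_u \cap B_r(x)) \le \eta r^{n-1}$ for a small $\eta$ to be chosen. By the coarea formula applied to $\rho \mapsto \cH^{n-2}(J_u \cap \partial B_\rho(x))$, there is a good radius $\rho \in (r/2, r)$ with $\cH^{n-2}(J_u \cap \partial B_\rho(x)) \le C\eta r^{n-2}$. On this sphere, the trace of $u$ is in $SBV(\partial B_\rho)$ with small jump set, so its harmonic extension into $B_\rho(x)$ has Dirichlet energy bounded by $C(n)\rho \int_{\partial B_\rho} |\n_\tau u|^2 + C\eta r^{n-1}$ (the second term coming from the jumps of the trace, each of which can be "bridged" at cost proportional to the trace size squared times $\cH^{n-2}$, which is $\le C\eta r^{n-2}\cdot \rho$). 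Combining with Step 1 and an absorption argument, one gets a decay estimate of the form $\omega(x, r/2) \le \tfrac12 \omega(x,r) + C\eta + Cr$, provided $\eta$ is small. Iterating (and using that $r < \mathrm{diam}$ is bounded via Corollary~\ref{cor:radius}) forces $\omega(x, 2^{-k}r) \to 0$.

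\textbf{Step 3: Contradiction.} Once $\omega(x,\rho) \to 0$, in particular $\rho^{1-n}\cH^{n-1}(J_u \cap B_\rho(x)) \to 0$. But the lower density bound for the jump set at points of $\bar S_u$ — which is exactly what we need here, so instead the argument must be self-contained — is obtained by noting that $\omega(x,\rho)\to 0$ together with $\cL^n$-density lower bound $\cL^n(B_\rho(x)\cap\{u>0\})\ge c\rho^n$ from Corollary~\ref{cor:radius} implies, via a Poincaré/De Giorgi iteration, that $u$ has a representative that is Hölder continuous and positive near $x$, hence $S_u$ is empty in a neighborhood of $x$, contradicting $x \in K_u = \bar S_u$. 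Therefore the smallness assumption fails and $\cH^{n-1}(J_u \cap B_r(x)) \ge \d_1 r^{n-1}$ for a uniform $\d_1 = \d_1(\W) > 0$.

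\textbf{Main obstacle.} The hard part is making Step 2 rigorous: controlling the Dirichlet energy of the harmonic extension of an $SBV$ trace on a sphere when that trace has a (small but nonzero) jump set, and doing so in a way that the "cost of bridging the jumps" is genuinely lower order. This requires a careful choice of good radius via Fubini/coarea simultaneously for $\cH^{n-2}(J_u\cap \partial B_\rho)$ and for the tangential Dirichlet energy $\int_{\partial B_\rho}|\n_\tau u|^2$, plus a one-dimensional-in-the-radial-variable surgery. The other delicate point is that our surface term is weighted by $\uu^2 + \ud^2$ rather than counting perimeter with unit multiplicity, so one must be careful that competitors do not accidentally create a low-density region where $0 < u < \d_0$; this is exactly where Theorem~\ref{thm:lowerbd} (the lower bound $u \in \{0\}\cup[\d_0,1]$) is used to keep the relaxed functional comparable to the geometric one.
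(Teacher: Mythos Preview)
Your overall strategy---show that small jump-set measure forces geometric decay of the normalized energy, iterate, and contradict $x\in K_u$---matches the paper's.  The difference is in how the decay step is proved.  The paper establishes the decay (Lemma~\ref{lem:decay}) by a \emph{compactness/blow-up} argument, not by your direct harmonic-replacement competitor.  One assumes the decay fails along a sequence $u_k$, rescales $v_k(x)=u_k(r_k x)/E_k^{1/2}$ by the energy scale, applies the partial Poincar\'e inequality (Proposition~\ref{prop:Poincare}) to produce truncations $\tilde v_k$ with uniform $L^2$ bounds and $\cL^n(\{v_k\neq\tilde v_k\})\le C\e_k^{n/(n-1)}\to 0$, and then uses $SBV$ compactness to extract a limit $v\in W^{1,2}(B_1)$.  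Harmonicity of $v$ is shown by testing minimality of $u_k$ against competitors built by interpolating (via a cutoff $\eta$) between $\tilde v_k$ and an arbitrary $\phi\in W^{1,2}$; the decay $\a(\t)\le C\t^n$ for harmonic $v$ then contradicts the assumed failure $\a(\t)\ge\t^{n-1/2}$.

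This route sidesteps precisely the obstacle you flag in Step~2.  You never have to make sense of the ``harmonic extension of an $SBV$ trace on a sphere'' or to price the ``bridging of jumps'': in the blow-up limit the jump set has already disappeared, and the competitor is an ordinary $H^1$ interpolation.  The cost of the jumps is paid once, through the truncation error controlled by Proposition~\ref{prop:Poincare}, rather than at every scale of an iteration.

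Your direct approach is not unreasonable in spirit, but Step~2 as written is a genuine gap.  The claimed bound ``harmonic extension energy $\le C\rho\int_{\partial B_\rho}|\n_\tau u|^2 + C\eta r^{n-1}$'' is not a standard fact, and your bridging heuristic is not a proof; when $n\ge 3$ the jump set of the trace on $\partial B_\rho$ is an $(n-2)$-dimensional set, and there is no obvious linear-in-$\cH^{n-2}$ surgery that produces an $H^1$ boundary datum with the stated energy control.  Making this rigorous would require substantial extra machinery, whereas the paper's compactness argument needs only the truncation lemma and $SBV$ compactness.
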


The proof of this theorem is completely analogous to the proof of the corresponding statement for $SBV$ minimizers, first shown in \cite{DCL}, of the Mumford-Shah functional, and is based on the following partial Poincar\'e inequality, which is taken from \cite[Theorem 4.14]{AFP}:

\begin{proposition}\label{prop:Poincare} Let $u\in SBV(B_r)$. Then there are numbers $-\8 < \t^- \leq m \leq \t^+ <\8$ such that the function $\tilde{u}=\max\{\min\{u,\t^+\},\t^-\}$ satisfies the Poincar\'e inequality
\[
 \|\tilde{u}-m \|_{L^{\frac{2n}{n-2}}(B_r)} \leq C \|\nabla u\|_{L^2(B_r)},
\]
and
\[
 \cL^n(\{u\neq \tilde{u}\})\leq C \1\cH^{n-1}(S_u \cap B_r)\2^{\frac{n}{n-1}}.
\]
The constants depend only on $n$.
\end{proposition}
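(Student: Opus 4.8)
The plan is to follow \cite[Theorem 4.14]{AFP}; I sketch the argument. Write $\mu=\cH^{n-1}(S_u\cap B_r)$ and let $C_n$ be the constant in the relative isoperimetric inequality in $B_r$ (which does not depend on $r$), so that $\cL^n(E\cap B_r)^{\frac{n-1}{n}}\le C_n P(E;B_r)$ whenever $\cL^n(E\cap B_r)\le \tfrac12\cL^n(B_r)$. First I would take $m$ to be a median of $u$ in $B_r$, so that $\cL^n(\{u>m\}\cap B_r)$ and $\cL^n(\{u<m\}\cap B_r)$ are both at most $\tfrac12\cL^n(B_r)$. For $t\in\R$ set $p_1(t)=\cH^{n-1}(\p^*\{u>t\}\cap B_r\sm S_u)$ and $q(t)=\cH^{n-1}(\p^*\{u>t\}\cap B_r\cap S_u)$, so that $P(\{u>t\};B_r)=p_1(t)+q(t)$ for a.e.\ $t$, with $q(t)\le\mu$ always and $\int_\R p_1(t)\,dt=\int_{B_r}|\n u|\,d\cL^n$ by the coarea formula (since $u\in SBV$, the singular part of $Du$ contributes only to $q$, not to $p_1$).

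The crucial choice is: let $\tau^+$ be the infimum of those $t\ge m$ for which $\cL^n(\{u>t\}\cap B_r)\le(2C_n\mu)^{\frac{n}{n-1}}$ — with $\tau^+=m$ if this already holds at $t=m$ — and define $\tau^-$ symmetrically using $\{u<t\}$. Since $\cL^n(\{u>t\}\cap B_r)\to 0$ as $t\to\8$, both are finite, and $\tau^-\le m\le\tau^+$. Then $\{u\ne\tilde u\}\cap B_r=(\{u>\tau^+\}\cup\{u<\tau^-\})\cap B_r$ has measure at most $2(2C_n\mu)^{\frac{n}{n-1}}=C(n)\mu^{\frac{n}{n-1}}$, which is the second assertion (trivially if $(2C_n\mu)^{\frac{n}{n-1}}>\tfrac12\cL^n(B_r)$). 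The point of the choice is that for $t\in(m,\tau^+)$ we have $\cL^n(\{u>t\}\cap B_r)>(2C_n\mu)^{\frac{n}{n-1}}$, so the relative isoperimetric inequality gives $2C_n\mu< C_n\bigl(p_1(t)+q(t)\bigr)\le C_n p_1(t)+C_n\mu$, whence $q(t)\le\mu< p_1(t)$ and therefore $P(\{u>t\};B_r)<2p_1(t)$; the same bound holds for $t\in(\tau^-,m)$.

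For the Poincar\'e estimate I would handle $w^+=(\tilde u-m)_+=\min\{(u-m)_+,\tau^+-m\}$ and $w^-=(m-\tilde u)_+$ separately; take $w^+$. Put $\gamma=\frac{2(n-1)}{n-2}$, so $\gamma\cdot\frac{n}{n-1}=\frac{2n}{n-2}$ and $2(\gamma-1)=\frac{2n}{n-2}$. As $w^+$ is bounded, $(w^+)^\gamma\in BV(B_r)$; combining Minkowski's integral inequality with the relative isoperimetric inequality (every $\{(w^+)^\gamma>s\}=\{u>m+s^{1/\gamma}\}\cap B_r$ has measure $\le\tfrac12\cL^n(B_r)$) yields
\[
 \|(w^+)^\gamma\|_{L^{\frac{n}{n-1}}(B_r)}\le C_n\int_0^\8 P\bigl(\{(w^+)^\gamma>s\};B_r\bigr)\,ds.
\]
Since $\{(w^+)^\gamma>s\}=\{u>m+s^{1/\gamma}\}$ with $m+s^{1/\gamma}\in(m,\tau^+)$ for $s<(\tau^+-m)^\gamma$, and is empty otherwise, the previous step bounds the integrand by $2p_1(m+s^{1/\gamma})$; substituting $\sigma=s^{1/\gamma}$ and invoking the coarea formula for the absolutely continuous part of $Du$ (here no jump terms intervene),
\[
 \int_0^\8 P\bigl(\{(w^+)^\gamma>s\};B_r\bigr)\,ds\le 2\gamma\int_0^{\tau^+-m}p_1(m+\sigma)\,\sigma^{\gamma-1}\,d\sigma\le 2\gamma\int_{B_r}(w^+)^{\gamma-1}|\n u|\,d\cL^n.
\]
By H\"older the last integral is at most $\|(w^+)^{\gamma-1}\|_{L^2(B_r)}\|\n u\|_{L^2(B_r)}=\|w^+\|_{L^{\frac{2n}{n-2}}(B_r)}^{\gamma-1}\|\n u\|_{L^2(B_r)}$, while the left side of the first display equals $\|w^+\|_{L^{\frac{2n}{n-2}}(B_r)}^\gamma$. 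Dividing by $\|w^+\|_{L^{\frac{2n}{n-2}}(B_r)}^{\gamma-1}$ (finite since $w^+$ is bounded; the bound is trivial if it vanishes) gives $\|w^+\|_{L^{\frac{2n}{n-2}}(B_r)}\le C(n)\|\n u\|_{L^2(B_r)}$, and adding the corresponding estimate for $w^-$ (noting $\tilde u-m=w^+-w^-$) completes the proof.

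The main obstacle — the only point beyond the classical derivation of an $L^2$-gradient Sobolev--Poincar\'e inequality from the $L^1$-gradient one — is the coupled choice of $\tau^\pm$: they must be chosen so that \emph{simultaneously} the excised set $\{u\ne\tilde u\}$ is controlled by $\mu^{n/(n-1)}$ and, throughout the interval $(\tau^-,\tau^+)$, the jump part $q(t)$ of the perimeter of the superlevel sets is dominated by its absolutely continuous part $p_1(t)$. This domination is precisely what lets the Sobolev iteration be run at the level of the superlevel sets, sidestepping the chain rule for $BV$ functions (which would reintroduce an uncontrolled jump contribution from $S_{\tilde u}$). Verifying that a single threshold achieves both ends, via the relative isoperimetric inequality, is the crux; the rest is routine bookkeeping with the coarea formula.
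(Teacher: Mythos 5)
The paper does not give its own proof of this proposition; it is quoted verbatim from \cite[Theorem 4.14]{AFP}, and your sketch is a faithful reconstruction of the argument given there — the median choice of $m$, the coupled choice of $\tau^{\pm}$ via the relative isoperimetric inequality so that the jump contribution $q(t)$ to $P(\{u>t\};B_r)$ is dominated by the absolutely continuous contribution $p_1(t)$ for $t\in(\tau^-,\tau^+)$, and the Sobolev iteration carried out directly on superlevel sets to avoid the $BV$ chain rule. One small edge case you should flag: your justification that $\tau^+<\infty$ (namely $\cL^n(\{u>t\}\cap B_r)\to 0$) only works when $\cH^{n-1}(S_u\cap B_r)>0$; when $\cH^{n-1}(S_u\cap B_r)=0$ the function $u$ lies in $W^{1,2}(B_r)$ and one should instead invoke the ordinary Sobolev--Poincar\'e inequality (or note the proposition is applied in the paper only to functions a priori bounded in $[0,1]$, for which this degeneracy never arises).
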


We now follow the original argument of \cite{DCL}.

\begin{lemma}\label{lem:decay} Let $u\in H_{s}$ be a local minimizer in $B_r(x)$, and $\t$ sufficiently small. There are values $r_0,\e_0$ depending only on $n,\t$ and $s$ such that if $r<r_0$,
\[
 \cH^{n-1}(S_u \cap B_r(x))\leq \e_0 r^{n-1},
\]
and
\begin{equation}\label{eq:decayh2}
 F(u;B_r(x)) \geq r^{n-\frac{1}{2}}, 
\end{equation}
then
\[
 F(u;B_{\t r}(x)) \leq \t^{n-\frac{1}{2}} F(u;B_r(x)).
\]
\end{lemma}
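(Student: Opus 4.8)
The plan is to argue by contradiction and compactness, in the style of the decay lemma in \cite{DCL}. Suppose the statement fails: then there is a sequence of local minimizers $u_k \in H_s$ in balls $B_{r_k}(x_k)$, with $r_k \to 0$ (and $r_k < 1/k$), satisfying $\cH^{n-1}(S_{u_k} \cap B_{r_k}(x_k)) \leq \tfrac1k r_k^{n-1}$ and $F(u_k; B_{r_k}(x_k)) \geq r_k^{n-1/2}$, but for which $F(u_k; B_{\t r_k}(x_k)) > \t^{n-1/2} F(u_k; B_{r_k}(x_k))$. After translating $x_k$ to the origin and rescaling to $B_1$, one introduces the normalized energy $\e_k := r_k^{-(n-1/2)} F(u_k; B_{r_k})$, which is bounded below by $1$. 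The key point, exactly as in Mumford--Shah, is that in fact $\e_k$ is \emph{bounded above} as well: comparing $u_k$ in $B_{r_k/2}$ with a competitor that is harmonic inside (or, since $S_{u_k}$ has tiny perimeter, a competitor that essentially erases the jump set) shows $F(u_k;B_{r_k/2}) \lesssim r_k^{n-1} + \text{(Dirichlet energy)}$, and the Dirichlet energy together with the small amount of jump set and volume is itself at most $F(u_k;B_{r_k})$; combining gives $F(u_k; B_{r_k}) \leq C(r_k^{n-1} + r_k^n)$, so $\e_k \leq C r_k^{1/2} \to 0$ — already a contradiction once $r_k$ is small, unless one works more carefully and instead uses this to drive a rescaled blow-up. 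Concretely, set $w_k(y) = \e_k^{-1/2} r_k^{-1/2} u_k(r_k y)$ on $B_1$; then $\int_{B_1} |\n w_k|^2 + \e_k^{-1}\cH^{n-1}(S_{w_k}\cap B_1) + \e_k^{-1} r_k \cL^n(\{w_k>0\}\cap B_1) \leq 1$, so $\cH^{n-1}(S_{w_k} \cap B_1) \leq \e_k \to 0$, and the Dirichlet energy of $w_k$ stays bounded.

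Next I would apply the partial Poincaré inequality, Proposition \ref{prop:Poincare}, to each $w_k$ on $B_1$: there are truncation levels and a constant $m_k$ so that the truncated function $\tilde w_k$ satisfies $\|\tilde w_k - m_k\|_{L^{2n/(n-2)}(B_1)} \leq C\|\n w_k\|_{L^2(B_1)}$ and $\cL^n(\{w_k \neq \tilde w_k\}) \leq C(\cH^{n-1}(S_{w_k}\cap B_1))^{n/(n-1)} \to 0$. Thus $\tilde w_k - m_k$ is bounded in $H^1(B_1)$, hence (after subtracting $m_k$ from $w_k$, which changes nothing since only $\n w_k$ enters the relevant terms once we are away from the jump set) converges weakly in $H^1$ and strongly in $L^2$ on compact subsets to a limit $w_\infty$, and since the measure of the bad set $\{w_k \neq \tilde w_k\}$ vanishes, $w_k \to w_\infty$ in $L^1_{loc}(B_1)$ as well, with $\cH^{n-1}(S_{w_\infty}) = 0$, i.e. $w_\infty \in H^1_{loc}(B_1)$. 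The crucial step is to show $w_\infty$ is \emph{harmonic} in $B_1$: this uses the local minimality of $u_k$ against competitors supported in a slightly smaller ball, together with the fact that the jump-set and volume terms in $F$, measured in the rescaled $w_k$ energy, are $o(1)$ relative to the Dirichlet energy; one patches a competitor for $w_k$ built from $w_\infty$ (or a harmonic function close to it) across the thin set $S_{w_k}$ using a cutoff, picking a good radius where $\cH^{n-1}(S_{w_k}\cap \p B_\rho)$ is controlled. This is the standard but delicate ``comparison with a harmonic function'' argument. Harmonicity then gives the decay estimate for $w_\infty$: $\fint_{B_\t}|\n w_\infty|^2 \leq C\t \fint_{B_{1/2}}|\n w_\infty|^2 \leq C\t$ (interior estimates for harmonic functions), and in particular $\int_{B_\t}|\n w_\infty|^2 \leq C\t^{n+1}$.

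Finally I would transfer this decay back to $u_k$. By the lower semicontinuity in Proposition \ref{prop:SBVcpt}(ii), $\int_{B_\t}|\n w_\infty|^2 \leq \liminf_k \int_{B_\t}|\n w_k|^2$, while the jump and volume contributions of $w_k$ on $B_\t$ are $o(1)$ uniformly; so for $k$ large,
\[
 \e_k^{-1} r_k^{-(n-1/2)} F(u_k; B_{\t r_k}) = \t^{n-1/2}\cdot\left(\t^{-(n-1/2)}\text{(rescaled energy on }B_\t)\right) \leq \t^{n-1/2}\cdot\left( C\t^{n+1}\t^{-(n-1/2)} + o(1)\right) \leq \t^{n-1/2}\cdot \tfrac12,
\]
provided $\t$ was fixed small enough that $C\t^{3/2} < \tfrac14$, say, and $k$ is large. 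Dividing back out, $F(u_k; B_{\t r_k}(x_k)) \leq \t^{n-1/2} F(u_k; B_{r_k}(x_k))$ for $k$ large, contradicting the assumed failure of the lemma. This fixes the thresholds: $\t$ is chosen first (small, depending only on $n$ via $C$), then $r_0, \e_0$ emerge from the compactness argument, and the dependence on $s$ enters only through the fact that $u_k \in H_s$ is used to control the volume term and to ensure $w_k$ does not degenerate. The main obstacle is the harmonicity step: carefully constructing an admissible $SBV$ competitor for $u_k$ out of the limiting harmonic function while crossing the small-but-nonempty jump set $S_{u_k}$, and showing the error terms are genuinely lower order — this is where the hypothesis $\cH^{n-1}(S_u \cap B_r) \leq \e_0 r^{n-1}$ and the lower energy bound \eqref{eq:decayh2} (which prevents $\e_k \to 0$ from trivializing things in the wrong way) both get used.
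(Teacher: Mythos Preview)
Your overall strategy---contradiction, rescale, apply the partial Poincar\'e inequality, extract a harmonic limit, and derive a contradiction from harmonic decay---is exactly the paper's approach (following \cite{DCL}). But two concrete errors break the argument as written.

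\textbf{First, the normalization is wrong.} You set $\e_k = r_k^{-(n-1/2)} F(u_k;B_{r_k})$ and $w_k(y)=\e_k^{-1/2} r_k^{-1/2} u_k(r_k y)$. A direct computation gives
\[
\int_{B_1}|\nabla w_k|^2 \,d\cL^n = \e_k^{-1} r_k^{1-n}\int_{B_{r_k}}|\nabla u_k|^2 \,d\cL^n \leq \e_k^{-1} r_k^{1-n} F(u_k;B_{r_k}) = r_k^{1/2}\to 0,
\]
so your blow-up is trivial: the limit is constant, not a nontrivial harmonic function, and nothing can be extracted from it. (Your paragraph asserting $\e_k\leq C r_k^{1/2}\to 0$ is also in direct conflict with $\e_k\geq 1$, which follows from hypothesis~\eqref{eq:decayh2}; the trivial competitor bound $F(u_k;B_{r_k})\leq C r_k^{n-1}$ gives $\e_k\leq C r_k^{-1/2}$, not $C r_k^{1/2}$.) The paper instead sets $E_k = r_k^{2-n} F(u_k;B_{r_k})$ and $v_k(y)=E_k^{-1/2}u_k(r_k y)$, which is the natural Dirichlet scaling: then $\int_{B_1}|\nabla v_k|^2\leq 1$, and the hypothesis~\eqref{eq:decayh2} translates to $E_k\geq r_k^{1/2}$, which is exactly what is needed to make the volume error $r_k^2 E_k^{-1}\to 0$ in the competitor comparison.

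\textbf{Second, lower semicontinuity goes the wrong way in your final step.} You write $\int_{B_\t}|\nabla w_\infty|^2\leq \liminf_k \int_{B_\t}|\nabla w_k|^2$ and try to use this to bound $F(u_k;B_{\t r_k})$ from above. But lower semicontinuity gives a \emph{lower} bound on the sequence, not an upper bound; knowing the limit is small says nothing about the sequence being small. The upper bound on the energy ratio $\a_k(\t)=F(u_k;B_{\t r_k})/F(u_k;B_{r_k})$ must come from the minimality comparison itself: in the paper, the same competitor construction that proves harmonicity of $v$ is applied with $\phi=v$, yielding $\a(\rho)\leq \int_{B_\rho}|\nabla v|^2$ for a.e.\ $\rho$. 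Combined with lower semicontinuity (which gives the reverse inequality), this establishes the equality $\a(\rho)=\int_{B_\rho}|\nabla v|^2$. Since $\a(1)=1$ and $v$ is harmonic, $\a(\t)\leq C\t^n < \t^{n-1/2}$ for $\t$ small, contradicting $\a_k(\t)>\t^{n-1/2}$. So the decisive inequality is obtained by testing minimality against the limit, not by semicontinuity.
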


\begin{proof}
 We may take $x=0$. Assume that the conclusion fails; then for any $\t>0$ there is a sequence $u_k\in H_s$ of local minimizers on $B_{r_k}$ with $r_k \searrow 0$, which satisfy
\[
 \frac{\cH^{n-1}(S_{u_k}\cap B_{r_k})}{r^{n-1}} = \e_k \searrow 0,
\]
and yet
\begin{equation}\label{eq:decayint}
  F(u_k,B_{\t r_k}) > \t^{n-\frac{1}{2}} F(u_k,B_{ r_k}).
\end{equation}
Set
\[
 E_k = r_k^{2-n}F(u_k;B_{r_k}),
\]
and define the functions
\[
 v_k (x) = \frac{u_k(r_k x)}{E_k^{1/2}}.
\]
These $v_k$ are in $SBV(B_1)$ and have
\[
 \int_{B_1}|\n v_k|^2 d\cL^n \leq 1.
\]
Let $\tilde{v}_k$ be the truncations of $v_k$ given in Proposition \ref{prop:Poincare}, and $m_k$ the corresponding medians ($\tilde{u}_k$ will refer to the scaled-back function $\tilde{u}_k(x)=\frac{E_k^{1/2}}{r_k} v_k(x/r_k)$). Then
\[
 \int_{B_1}|\tilde{v}_k-m_k|^2 d\cL^n \leq C,
\]
and
\[
 \cL^n(\{v_k \neq \tilde{v}_k\})\leq C \1\cH^{n-1}(S_{v_k})\2^{\frac{n}{n-1}}\leq C \e_k^{\frac{n}{n-1}}\rightarrow 0.
\]
It follows easily from the $SBV$ compactness theorem (Proposition \ref{prop:SBVcpt}) that, passing to a subsequence, $\tilde{v}_k-m_k$ converge in $L^2$ and a.e. to a function $v\in W^{1,2}(B_1)$, with
\[
 \int_{B_r}|\n v|^2 d\cL^n \leq \liminf_k \int_{B_r}|\n \tilde{v}_k|^2 d\cL^n
\]
for each $r\leq 1$.

Next, Consider the sequence of monotone functions
\[
 \a_k(t) = \frac{F(u_k; B_{t r_k})}{F(u_k;B_{r_k})}.
\]
Up to passing to a subsequence, we may assume that $\a_k \rightarrow \a$ $\cL^1$-a.e., where $\a$ is also monotone. Then
\[
 \int_{B_r}|\n v|^2 d\cL^n \leq \liminf \int_{B_r}|\n \tilde{v}_k|^2 d\cL^n \leq  \liminf \a_k(r)\leq 1,
\]
and by the contradiction assumption \eqref{eq:decayint} we have that $\a_k(\t)\geq \t^{n-\frac{1}{2}}$. We may take instead
\[
 \tilde{\a}_k(t) = \frac{F(\tilde{u}_k; B_{t r_k})}{F(u_k;B_{r_k})},
\]
which are monotone and bounded by
\[
 \tilde{\a}_k(t)\leq \a_k(t) + \frac{2\cH^{n-1}(J_{u_k} \cap B_{t r_k})}{F(u_k;B_{r_k})}\leq (1+\frac{2}{s^2})\a_k(t).
\]
We also assume that $\tilde{\a}_k \rightarrow \tilde{\a}$ for $\cL^1$-a.e. $t$.

Notice that, up to a subsequence,
\[
 \frac{r_k}{E_k}\1\cL^{n}(\{\tilde{v}_k\neq v_k \})  \2   \rightarrow 0
\]
as $k\rightarrow \infty$. Indeed, it follows from the estimate in Proposition \ref{prop:Poincare} that
\[
 \frac{r_k}{E_k}\1\cL^{n}(\{\tilde{v}\neq v \})  \2\leq C\frac{r_k}{E_k} \e_k^{\frac{n}{n-1}}.
\]
We are done unless $r_k/E_k \rightarrow \8$, but in that case, using $\e_k \leq s^2 F(u_k,B_{r_k}) r_k^{1-n}$, we have
\[
 \frac{r_k}{E_k} \e_k^{\frac{n}{n-1}} \leq \frac{r_k}{E_k}\1 s^2 \frac{E_k}{r_k} \2^{\frac{n}{n-1}} \rightarrow 0.
\]
It follows from Fubini's theorem that for $\cL^1$-a.e. $\r$,
\[
 \frac{r_k}{E_k}\1\cH^{n-1}(\{\tilde{v}_k\neq v_k \}\cap \partial B_{\r})\2\rightarrow 0.
\]
Let $I$ be the full-measure subset of $[0,1]$ containing those points for which both the above holds and $\a,\tilde{\a}$ are continuous.

We now show that $v$ is harmonic. Indeed, let $\phi\in W^{1,2}(B_1)$ be a function with $v-\phi$ supported on $B_\r$, where $\r<1$.  Choose a smooth cutoff function $\eta$ which is compactly supported on $B_{\r'}$ and equals $1$ on $B_\r$, and set 
\[                                                                                                                                                                                                                                                  
\phi_k = \1(m_k + \phi)\eta  + \tilde{v}_k (1-\eta)\2 1_{B_{\r'}} + v_k 1_{B_{1}\sm B_{\r'}}.                                                                                                                                                                                                                                                   \]
 We use $E_k^{1/2}\phi_k(x/r_k)$ in the minimality inequality for $u_k$ to obtain (after scaling)
\begin{align*}
 \a_k(\r')=\frac{F(u_k;B_{\r' r_k})}{F(u_k;B_{r_k})} & \leq 2\frac{r_k}{E_k}\cH^{n-1}(\{v_k \neq \tilde{v}_k\}\cap \p B_\r) + \int_{B_\r} |\n \phi|^2 d\cL^n + C[\tilde{\a}_k(\r')-\tilde{\a}_k(\r)] \\
& \qquad+ C\int_{B_{\r'}\sm B_{\r}} |\n \phi|^2 + (\tilde{v}_k- m_k -\phi)^2 |\n \eta|^2 d\cL^n + Cr_k^2 E_k^{-1}.
\end{align*}
Making sure that $\r,\r'\in I$ (note that this implies that $\lim_k \a_k(\r')=\a(\r')$) and taking the liminf in $k$, we see that the last two terms vanish: the first because $\tilde{v}_k- m_k \rightarrow v$ in $L^2$ and $v=\phi$ on $B_{\r'}\sm B_\r$, while the second from the assumption \eqref{eq:decayh2}. The first term on the right also drops, and so we are left with
\[
 \int_{B_{\r'}}|\n v|^2 d\cL^n \leq \a(\r') \leq  \int_{B_\r} |\n \phi|^2 d\cL^n + C[\tilde{\a}(\r')-\tilde{\a}(\r)] + C\int_{B_{\r'}\sm B_{\r}} |\n \phi|^2d\cL^n.
\]
Taking the limit as $\r\nearrow \r'$ gives
\[
  \int_{B_{\r'}}|\n v|^2 d\cL^n \leq \int_{B_\r'} |\n \phi|^2 d\cL^n,
\]
which implies $v$ is harmonic. Also, by plugging in $\phi=v$, we see that
\[
\a(\r')=\int_{B_{\r'}}|\n v|^2 d\cL^n. 
\] 
This equality holds on $I$, and as both sides are monotone and one of them is continuous, it also holds for every radius in $[0,1]$. Finally, $\a(1)=1$, while from \eqref{eq:decayint} and the now-established continuity of $\a$, we have $\a(\t) =\lim \a_k(\t)\geq \t^{n-\frac{1}{2}}$. For sufficiently small $\t$, this contradicts the fact that $v$ is harmonic.
\end{proof}

\begin{proof}[Proof of Theorem \ref{thm:density}]
 We prove the theorem under the restriction 
 \[
r<\min\{r_0(\t_1),r_0(\t_2),\e_{\t_2}^2 \t_1^{2n-2}\t_2^{2n-1}\},  
 \]
 with $\t_1,\t_2$ determined below; the general case follows by taking a possibly smaller constant $\d_1$ depending only on $\W$. First, let $x\in S_u$ and $B_r(x)\ss \R^n \sm \W$, and say that
\[
 \cH^{n-1}(S_u \cap B_r(x))<\e_0(\t_1) r^{n-1}.
\]
We claim that this implies that
\[
 F(u;B_{\t_1 \t_2^k r})\leq \e(\t_2)\t_2^{k(n-\frac{1}{2})} \t_1^{n-1} r^{n-1}.
\]
Indeed, for $k=0$, we have that either
\[
 F(u;B_{\t_1 r})\leq F(u;B_{r})\leq r^{n-\frac{1}{2}}\leq \e(\t_2)(\t_1 r)^{n-1},
\]
or else
\[
 F(u;B_{\t_1 r})\leq \t_1^{n-\frac{1}{2}} F(u;B_{r})\leq C(n)\t_1^{n-\frac{1}{2}} r^{n-1}\leq \e(\t_2)(\t_1 r)^{n-1},
\]
where we used the trivial estimate $F(u;B_r)\leq P(B_r)$ and then chose $C(n)^2 \t_1 = \e(\t_2)^2$. If the inequality holds for some positive $k$, then using Lemma \ref{lem:decay} again gives either
\[
 F(u;B_{\t_1 \t_2^{k+1} r}) \leq F(u;B_{\t_1 \t_2^k r}) \leq (\t_1 \t_2^k r)^{n-1/2}\leq  \e(\t_2)\t_2^{(k+1)(n-\frac{1}{2})} \t_1^{n-1} r^{n-1},
\]
or
\[
 F(u;B_{\t_1 \t_2^{k+1} r}) \leq \t_2^{n-\frac{1}{2}}F(u;B_{\t_1 \t_2^k r})\leq \e(\t_2)\t_2^{(k+1)(n-\frac{1}{2})} \t_1^{n-1} r^{n-1}
\]
from the inductive hypothesis.

It follows that $r^{1-n}F(u;B_r(x))\searrow 0$ as $r\searrow 0$, which means
\[
 \lim_{r\rightarrow 0} r^{1-n} \1\int_{B_r}|\n u|^2d\cL^n + \cH^{n-1}(S_u\cap B_r)\2 = 0.
\]
Using \cite[Theorem 7.8]{AFP}, this contradicts $x\in S_u$.

Finally, if $x\in K_u$ and
 \[
 \cH^{n-1}(S_u \cap B_{2r}(x))<\e_0(\t_1) r^{n-1},
\]
we may find $y\in S_u\cap B_r(x)$, for which 
\[
 \cH^{n-1}(S_u \cap B_r(y))<\e_0(\t_1) r^{n-1},
\]
and we are lead to a contradiction.
\end{proof}

This theorem has a number of useful consequences, some of which are discussed below and in the next section.

\begin{corollary}\label{cor:volden}
 Let $u\in H_{s}$ be a minimizer. Then $\cH^{n-1}(K_u\sm J_u)=0$. Furthermore, for any ball $B_r(x)\subset \R^n \sm \W$, $U=\{u>0\}\sm K_u $ is open and has at most $N=N(\W)$ connected components which intersect $B_{r/2}$ nontrivially, and each such component has Lebesgue measure of at least $c(\W)r^n$.
\end{corollary}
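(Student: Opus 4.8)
The plan is to extract all three statements from the Ahlfors regularity bounds of Theorem \ref{thm:density} together with the lower bound on $u$ from Theorem \ref{thm:lowerbd}. For the first claim, $\cH^{n-1}(K_u \setminus J_u) = 0$: by the Federer--Vol'pert theorem (Proposition \ref{prop:FedVol}), $\cH^{n-1}(S_u \setminus J_u) = 0$ always, so it suffices to show $\cH^{n-1}(K_u \setminus S_u) = 0$. A point $x \in K_u \setminus S_u$ lies in $\bar S_u$ but not in $S_u$, so it is a limit of points of $S_u$. The two-sided density estimate \eqref{eq:uniformden} for $S_u$ --- the upper bound from Corollary \ref{cor:radius}, the lower bound $\d_1 r^{n-1}$ from Theorem \ref{thm:density} valid at every point of $K_u$ --- applies on balls $B_r(x) \subset \R^n \setminus \W$, and in particular $\cH^{n-1}(S_u \cap B_r(x)) \geq \d_1 r^{n-1}$ for all small $r$. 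Combined with the upper Ahlfors bound this forces $x$ to be a point of positive upper density for the measure $\cH^{n-1} \mres S_u$; by the standard density comparison lemma for rectifiable sets (e.g. \cite[Theorem 2.56 or 2.63]{AFP}), the set of such $x$ outside $S_u$ is $\cH^{n-1}$-null. (Alternatively: $K_u \setminus S_u$ is contained in the support of $\cH^{n-1}\mres S_u$ minus $S_u$; one checks it has vanishing $\cH^{n-1}$ measure using that $S_u$ is $\cH^{n-1}$-rectifiable with the lower density bound.)

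For the second claim, that $U = \{u > 0\} \setminus K_u$ is open: $\{u > 0\}$ is only defined up to a null set, so I mean the representative $\{x : \limsup_{r \to 0} \fint_{B_r(x)} u > 0\}$, equivalently $\{u > 0\}^{(1)} \cup S_u$ intersected appropriately; then $U$ is the set of points of approximate continuity of $u$ at which the limit is in $[\d, 1]$. Away from $K_u$, the function $u$ has no jump set, so on any ball $B_\rho(x) \subset \R^n \setminus K_u$ we have $u \in W^{1,2}(B_\rho(x))$ and, since $u$ is a local minimizer there, $u$ is harmonic (this is the content of the harmonicity argument already run inside the proof of Lemma \ref{lem:decay}, applied on a fixed ball rather than in a blow-up; alternatively it is \cite[Lemma \ref{lem:EL}]{} as referenced). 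A harmonic function has an open positivity set, so $\{u > 0\} \cap (\R^n \setminus K_u) = U$ is open.

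For the counting bound on components, the key mechanism is the volume lower bound from Corollary \ref{cor:radius}: if $x \in K_u$ then $\cL^n(B_r(x) \cap \{u > 0\}) \geq c(n,\W) r^n$. First, each connected component $V$ of $U$ meeting $B_{r/2}(x_0)$: either $V$ is contained in a ball of radius comparable to $r$, in which case I claim $\cL^n(V) \geq c(\W) r^n$, or $V$ escapes $B_{2r}(x_0)$, in which case $\cL^n(V \cap (B_{2r}(x_0)\setminus B_{r/2}(x_0))) \gtrsim r^n$ by an isoperimetric/connectedness argument. To see the first case, note $\partial V \subset K_u$ (since $V$ is a connected component of the open set $U = \{u>0\}\setminus K_u$ and $u$ is harmonic and positive on $V$, so $u = 0$ on $\partial V \cap \{u>0\}^{c}$ forces, via the trace term in $F$ and minimality, that $\partial V$ contributes to $J_u$); picking any boundary point $y \in \partial V \cap \bar{B}_{r}(x_0)$ and applying the volume estimate at $y$ on a ball of radius comparable to $\mathrm{diam}(V)$ or $r$ gives the bound. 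Summing these disjoint lower bounds $c(\W) r^n$ against the total $\cL^n(\{u>0\} \cap B_{Cr}(x_0)) \leq C r^n$ bounds the number of components by a dimensional-and-$\W$ constant $N$, and simultaneously gives each one measure $\geq c(\W) r^n$.

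The main obstacle I anticipate is the bookkeeping around which representative of $\{u>0\}$ is used and the claim $\partial V \subset K_u$: one must rule out a component $V$ of $U$ whose boundary touches $\{u = 0\}^{(1)}$ across a set of positive $\cL^n$-measure-zero but nonempty interior trace --- i.e. genuinely establishing that the free boundary separating a positivity component from the zero phase is captured by $K_u$ with the Ahlfors lower bound. This uses minimality essentially: cutting $V$ out entirely (replacing $u$ by $u 1_{U \setminus V}$) is an inward-type competitor, and comparing energies shows $\partial V$ must carry $\cH^{n-1}$-mass with $u$-trace bounded below by $\d$, placing $\partial V$ in $\bar S_u = K_u$. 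Once that is pinned down, everything else is the routine covering/volume-counting argument sketched above.
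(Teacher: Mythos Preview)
Your treatment of the first claim ($\cH^{n-1}(K_u\setminus J_u)=0$) and of the openness of $U$ is fine and matches the paper's ``standard fact'' appeal to \cite{AFP} plus harmonicity away from $K_u$.

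The gap is in the component count. You try to deduce $\cL^n(V)\geq c(\W)r^n$ for a single component $V$ by invoking the volume estimate of Corollary~\ref{cor:radius} at a boundary point $y\in\partial V\cap K_u$. But that estimate only says $\cL^n(B_\rho(y)\cap\{u>0\})\geq c\rho^n$; it does not tell you which component the mass sits in, so it gives no lower bound on $\cL^n(V)$ itself. The ``escape'' branch has the same defect: a connected set joining $B_{r/2}$ to $\partial B_{2r}$ can be an arbitrarily thin tube, so connectedness plus isoperimetry alone does not yield $\cL^n(V\cap B_{2r})\gtrsim r^n$ without further input.

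The paper's fix is exactly the competitor idea you mention only at the very end but never actually deploy for the volume bound: rerun the differential-inequality argument of Corollary~\ref{cor:radius} \emph{on each component separately}. Fix $V$ meeting $B_{r/2}$, pick $x\in V\cap B_{r/2}$, and for $\rho<r/2$ use the inward competitor $u\,1_{\R^n\setminus(V\cap B_\rho(x))}$. Minimality then gives $\cH^{n-1}(\partial V\cap B_\rho(x))\leq C\,P(B_\rho(x);V)$, hence $P(V\cap B_\rho(x))\leq C\,P(B_\rho(x);V)$, and the isoperimetric differential inequality $g(\rho)\leq C\,g'(\rho)^{n/(n-1)}$ with $g(\rho)=\cL^n(V\cap B_\rho(x))$ integrates to $g(r/2)\geq c\,r^n$. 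Summing the disjoint contributions against $\cL^n(B_r)$ then bounds the number of components. Your proposal already contains this competitor; you just need to use it to drive the per-component volume bound rather than only to locate $\partial V$ inside $K_u$.
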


\begin{proof}
 The first conclusion is a standard fact about sets with uniform $\cH^{n-1}$ density bounds (see \cite{AFP}), while the second follows from the first by applying the argument in Corollary \ref{cor:radius} to each connected component of $B_r\sm K$ which intersects $B_{r/2}$. 
\end{proof}

\begin{remark}
 Set $A$ to be the set $\{\uu>0\}\sm K_u$. We have that $\p A \ss K_u$, as any of the boundary points of $A$ which are not in $K_u$ must have an entire small ball around them on which $u$ is harmonic, and hence strictly positive. This implies that $A$ is open. We actually have that $K_u = \p A$, as certainly $J_u \ss \p A$, and $K_u = \bar{J}_u$. From \cite[Proposition 4.4]{AFP} and the subsequent remarks, any $H^1$ function $v$ on an open set $A$ with $\cH^{n-1}(\partial A)<\infty$ and with $v$ having summable boundary trace corresponds to an $SBV$ function. It follows that our pair $(A,u)$ is a minimizer of
\[
 F(A,u) = \int_{A}|\n u|^2 d\cL^n + \int_{\p A} \uu^2 + \ud^2 d\cH^{n-1} +\cL^n(A)
\]
over all $A$ as above containing $\W$ and $u\in H^1(A)$ with $u\equiv 1$ on $\W$. 
\end{remark}

\begin{remark}\label{rem:unifrect} In his book \cite{D}, Guy David considers an extremely general notion of quasiminimizer for the Mumford-Shah functional. Indeed, it may be checked, by applying the previous remark, that any local minimizer on $B_r$ in $H_s$ is in David's class $TRLQ(B_r)$ with constant $M$ depending only on $s$ and $a$ as small as one wishes, by choosing $r_0$ small. We may then deduce from his Theorem 74.1 that $K_u$ is uniformly rectifiable. This will be used in the next section.  
\end{remark}

\section{Behavior Under Limits}

This section collects a number of auxiliary propositions relating
to how sequences of minimizers converge to other minimizers. The approach is similar to that of Bonnet, and as there are some obvious topological issues with the limiting minimization problem, we interpret it in a stronger sense than the $SBV$ framework.

Before commencing with the actual blow-up procedure, we require an improved semicontinuity property of the surface energy. The proof given here relies on \cite[Theorem 1.1]{Kriv}, which we state in the following proposition for convenience. While we note that it is possible to prove the lemma without resorting to this theorem by using the arguments of \cite[Lemma 5.4]{Kriv} along with reductions from geometric measure theory, we give the simpler proof below.

\begin{proposition}\label{prop:fimpsm}
 Let $(K,u)$ be a local $F$ minimizer on $B_r$, with $u\in \{0\}\cup [s,1]$ and $0\in K$. Then there are constants $r_0$, $\a$ and $\e$, depending only on $n$ and $s$, such that if for some $\r \leq r_0$ there exists a hyperplane $\pi$ so that
 \[
  \sup_{x\in K \cap B_\r} d(x,\pi)< \e \r,
 \]
then there are two functions $g_-,g_+: \R^{n-1}\rightarrow \R$ with $g_-\leq g_+$,
\[
 \|g_+\|_{C^{1,\a}(B_\r)} + \|g_-\|_{C^{1,\a}(B_\r)}\leq 1,
\]
and (identifying $\pi$ with $\R^{n-1}$) $K\cap B_{\r/2} = B_{\r/2} \cap \{(x',g_+(x')),(x',g_-(x'))| x'\in \R^{n-1}\}$. If $u=0$ at a point of $B_{\r/2}$ outside of the region $g_-<g_+$, then $g_-=g_+$, and is in fact a $C^\8$ function.
\end{proposition}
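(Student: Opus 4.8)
\textbf{Step 1 (separate $K$ into two one-sided free boundaries).} The statement is \cite[Theorem 1.1]{Kriv}; the route to it is an $\e$-regularity (flatness-improvement) argument. By Remark \ref{rem:unifrect}, $K_u$ is a quasi-minimizer of the Mumford--Shah functional in David's sense with constants governed by $s$; using this, the Ahlfors regularity of Theorem \ref{thm:density}, the uniform density bounds \eqref{eq:uniformden}, the Hausdorff-convergence tools of Section 2, and the component count of Corollary \ref{cor:volden}, I would first show that $\e$-flatness of $K$ in $B_\r$ forces $B_{3\r/4}\sm K$ to decompose into an upper region $U_+$ meeting $B_{3\r/4}\cap\{x_n=\r/4\}$ (identifying $\pi$ with $\{x_n=0\}$), a lower region $U_-$, and at most a thin slab squeezed between them; where $U_\pm$ carry $u>0$, $u$ is harmonic with $s\le u\le1$ and has trace on $K$ bounded below. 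With the convention $g_+:=g_-$ where no genuine upper component is present, this identifies $K\cap B_{\r/2}$ as a set with the union of the graphs of two functions $g_-\le g_+$, the region $\{g_-<x_n<g_+\}$ being $\{u=0\}$, and reduces matters to the regularity of each graph as a one-sided free boundary.

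\textbf{Step 2 (flatness improvement for one sheet).} For, say, the upper sheet I would argue by compactness. If the decay conclusion fails, take minimizers $(K_j,u_j)$ with rescaled flatness $\e_j=\mathrm{flat}(K_j;B_1)\to0$, where $\mathrm{flat}(E;B_t)=t^{-1}\inf_\ell\sup_{x\in E\cap B_t}d(x,\ell)$ over hyperplanes $\ell$; then $K_j\to\{x_n=0\}$ locally in Hausdorff distance, and by energy/Caccioppoli estimates and De Giorgi--Nash--Moser up to the Robin boundary (using $u_j\ge s$) the $u_j$ converge locally uniformly on $\overline{\{x_n>0\}}$ to the model affine profile fixed by the boundary conditions. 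The rescaled heights $h_j=g_{+,j}/\e_j$ and corrections $w_j=(u_j-\mathrm{model})/\e_j$ then converge to a solution $(w,h)$ of the linearized system: $w$ harmonic in $\{x_n>0\}$, a linear Robin-type condition on $\{x_n=0\}$ coupling $w$ to $h$, and $h$ solving the linearization of the prescribed-mean-curvature free-boundary condition of Lemma \ref{lem:EL}, which at leading order makes $h$ harmonic in $x'$ modulo a linear coupling with $w,\n w$. Interior estimates for this linear system give $\|h-\ell\|_{L^\8(B_\theta)}\le C\theta^2\|h\|_{L^\8(B_1)}$ for some affine $\ell$, hence $\mathrm{flat}(K;B_{\theta\r})\le\tfrac12\theta\,\mathrm{flat}(K;B_\r)$ for a fixed small $\theta$ and $j$ large --- a contradiction.

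\textbf{Step 3 (iteration and the $C^\8$ case).} Iterating the decay across dyadic scales with tilting hyperplanes, separately for the two sheets, gives that $g_\pm$ are differentiable throughout $B_{\r/2}$ with $\a$-H\"older derivatives, and shrinking $\e,r_0$ arranges $\|g_+\|_{C^{1,\a}}+\|g_-\|_{C^{1,\a}}\le1$. If moreover $u=0$ at a point of $B_{\r/2}$ outside the gap $\{g_-<x_n<g_+\}$, then $\{u=0\}$ reaches into one of the regions $U_\pm$; since the sheets are now $C^{1,\a}$ and the relevant alternatives (that region carrying $u>0$ versus $u=0$) are open conditions in $x'$ --- by openness of the positivity set on one side and the strong maximum principle on the other --- and the base disk is connected, this propagates, forcing $g_-\equiv g_+=:g$. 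Then $K\cap B_{\r/2}$ is the single $C^{1,\a}$ graph of $g$ bounding $\{u>0\}$ on one side, with $u$ harmonic there, $\p_\nu u+hu=0$ on $K$, and the Euler--Lagrange free-boundary condition of Lemma \ref{lem:EL}; flattening the boundary, Schauder estimates give $u\in C^{1,\a}$ up to $K$, the free-boundary equation then gives $g\in C^{2,\a}$, and the usual bootstrap (e.g.\ a partial hodograph transform) yields $g\in C^\8$.

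\textbf{Main obstacle.} The heart of the matter is Step 2: identifying the correct linearized coupled system --- the free-boundary height coupled through the linearized Robin and Euler--Lagrange conditions to the harmonic correction --- and establishing the uniform regularity of the $u_j$ up to the flat boundary needed to pass to the limit. This requires uniform Lipschitz/$C^{1,\a}$ bounds on the $u_j$ near $K_j$ together with uniform nondegeneracy of the boundary data, which is exactly what the lower bound $u\ge s$ and the density estimates of Section 5 supply; without them the blow-up sequences could degenerate. The topological separation of Step 1, while routine granted David's theory, is an indispensable preliminary.
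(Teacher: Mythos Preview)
The paper does not prove Proposition~\ref{prop:fimpsm} at all: it is stated without proof and explicitly attributed to the companion paper, as the text preceding it says that it restates \cite[Theorem~1.1]{Kriv} ``for convenience.'' So there is no proof in this paper to compare your proposal against; you have correctly identified the external reference and then gone further by sketching how such an $\e$-regularity theorem would be established.

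Your outline --- topological separation into two sheets, a flatness-improvement step via compactness and linearization, and then iteration with a Schauder bootstrap for the $C^\infty$ conclusion --- is the standard architecture for results of this type and is consistent with how the paper describes the content of \cite{Kriv}. A few cautionary remarks on the sketch itself. First, your Step~1 treats the two sheets as if, once separated, each becomes an independent one-sided free boundary; but the distinguishing feature of this problem (emphasized in the introduction and reflected in the very form of the conclusion, with $g_-\le g_+$ possibly touching) is that the sheets can meet on a set of positive measure, and the regularity theory must survive across that contact set. Your reduction ``with the convention $g_+:=g_-$ where no genuine upper component is present'' hides exactly this difficulty. Second, in Step~2 you invoke Lemma~\ref{lem:EL} for the free-boundary condition, but that lemma is stated for $F_0$ minimizers (the blow-up problem), not for $F$ minimizers; you would need the analogous first-variation identity for $F$, which carries an extra curvature-type term from the Robin weight $u^2$ rather than the constant weights $\mu_i$. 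Third, the paper hints (in the paragraph after Lemma~\ref{lem:setmin} and in the reference to \cite[Section~12]{Kriv}) that even establishing that $K$ locally disconnects the ball under the flatness hypothesis is already nontrivial in dimensions $n\ge 3$, so your appeal to ``David's theory'' in Step~1 as ``routine'' may be underselling that step.

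In short: there is nothing to compare against in this paper, your plan is a reasonable blueprint for the result in \cite{Kriv}, and the place where it is thinnest is precisely the interaction of the two sheets, which is the novel aspect of this free boundary relative to Mumford--Shah or one-phase problems.
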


\begin{lemma}\label{lem:semicontinuity} Let $u_k$ be a sequence of local minimizers on $B_{2 r_k}$, with $r_k$ going to $0$, $\tilde{u}_k(\cdot):=u(r_k \cdot)\rightarrow u_0$ in $L^1$, and $K_{u_k}\rightarrow K$ locally in the Hausdorff topology on $B_2$. Assume that $K$ is a countably $\cH^{n-1}$-rectifiable set. Then
\[
 \int_{B_1 \cap K}\uu^2_0+\ud^2_0 d\cH^{n-1}\leq \liminf \int_{B_1\cap K_{\tilde{u}_k}} \overline{\tilde{u}}_k^2+\underline{\tilde{u}}_k^2 d\cH^{n-1}.
\]
\end{lemma}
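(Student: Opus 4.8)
The plan is to reduce the lower semicontinuity of the weighted surface energy to a combination of (a) a standard weighted-weak-$*$ convergence argument for the restricted measures $\mu_k = (\overline{\tilde u}_k^2 + \underline{\tilde u}_k^2)\,\mathcal H^{n-1}\mres K_{\tilde u_k}$, and (b) the pointwise structure provided by Proposition \ref{prop:fimpsm}. First I would pass to a subsequence along which the liminf is attained and along which $\mu_k \rightharpoonup \mu$ weakly-$*$ as Radon measures on $B_2$; this is possible because the total masses are uniformly bounded (each $u_k$ is a local minimizer in $H_s$, so by Theorem \ref{thm:density} and Corollary \ref{cor:radius} the sets $K_{\tilde u_k}$ are Ahlfors-regular with uniform constants, and the integrand is bounded by $2$). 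By the standard semicontinuity for weak-$*$ convergence it then suffices to show that $\mu \geq (\overline u_0^2 + \underline u_0^2)\,\mathcal H^{n-1}\mres K$ on $B_1$, i.e. a lower bound on the density of $\mu$ with respect to $\mathcal H^{n-1}\mres K$ at $\mathcal H^{n-1}$-a.e. point of $K$.

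The key step is a blow-up/localization argument at a typical point $x_0 \in K \cap B_1$. Since $K$ is countably $\mathcal H^{n-1}$-rectifiable, at $\mathcal H^{n-1}$-a.e. $x_0$ the set $K$ has an approximate tangent plane $\pi$, and moreover by Ahlfors regularity plus Hausdorff convergence (using the remarks around \eqref{eq:uniformden}) the limit $K$ inherits the uniform density bounds, so $\mathcal H^{n-1}\mres K$ is itself Ahlfors-regular. At such a point, for small $\rho$ the rescaled sets $K_{\tilde u_k}$ near $x_0$ at scale $\rho$ are, for $k$ large, trapped in a thin slab around a hyperplane close to $\pi$ — this is where Hausdorff convergence is used — and hence Proposition \ref{prop:fimpsm} applies to each $\tilde u_k$ at scale $\approx \rho$ (note the rescaled $\tilde u_k$ are local $F$-minimizers, and $\rho$ can be taken below the $r_0$ of that proposition). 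Thus $K_{\tilde u_k} \cap B_{\rho/2}(x_0)$ is the union of at most two $C^{1,\alpha}$ graphs over $\pi$, and on each graph $\tilde u_k$ has a well-defined $H^1$ trace; the surface energy there is $\int (\text{trace})^2 \, d\mathcal H^{n-1}$ over the graph(s), which converges (by the uniform $C^{1,\alpha}$ bounds and the graphs converging, together with convergence of the traces following from $\tilde u_k \to u_0$ in $L^1$ and interior gradient bounds for the harmonic functions on each side) to the corresponding quantity for $u_0$ on the two limiting graphs. The limiting graphs cover $K$ near $x_0$, and the two traces of $u_0$ are exactly $\overline u_0$ and $\underline u_0$ when there are two sheets (and $\overline u_0 = \underline u_0$ with a smooth single sheet in the degenerate case). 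This yields $\mu \geq (\overline u_0^2 + \underline u_0^2)\mathcal H^{n-1}\mres K$ at $x_0$, and integrating gives the claim.

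The main obstacle I anticipate is the trace convergence on the moving graphs: one needs that $\int_{G_k} (\operatorname{tr}_\pm \tilde u_k)^2 \to \int_{G} (\operatorname{tr}_\pm u_0)^2$ where $G_k \to G$ are the $C^{1,\alpha}$ graphs. The clean way is to observe that on each side of $K_{\tilde u_k}$ the function $\tilde u_k$ is harmonic (or zero) in a one-sided neighborhood of the graph; combining the uniform $C^{1,\alpha}$ control on $G_k$ with interior estimates and a reflection/straightening of the boundary, one gets that $\tilde u_k$ is uniformly $C^{0,\beta}$ up to each side of $G_k$ on a slightly smaller ball, so the traces converge uniformly after flattening, and the $L^1$ convergence $\tilde u_k \to u_0$ identifies the limit. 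A secondary technical point is handling the ``bad'' set of $x_0 \in K$ where $K$ has no approximate tangent plane or fails the density bound — but this set is $\mathcal H^{n-1}$-null by rectifiability and Ahlfors regularity, so it contributes nothing to either side. There is also the cosmetic issue, already flagged in the remark preceding the lemma, that invoking Proposition \ref{prop:fimpsm} is a convenience; one could instead argue directly with geometric measure theory as in \cite[Lemma 5.4]{Kriv}, but the proof above is shorter.
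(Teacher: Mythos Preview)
Your proposal is correct and follows essentially the same approach as the paper: both arguments localize at $\mathcal H^{n-1}$-a.e.\ point of $K$ using rectifiability to obtain an approximate tangent plane, use Hausdorff convergence to verify the flatness hypothesis of Proposition~\ref{prop:fimpsm} for each $\tilde u_k$, and then exploit the resulting $C^{1,\alpha}$ graph structure together with uniform convergence of $\tilde u_k \to u_0$ to pass to the limit in the trace integrals on each small ball. The only difference is in how the local-to-global step is packaged: the paper assembles the local inequalities via a Vitali--Besicovitch covering of $J$ and Fatou's lemma, whereas you pass to a weak-$*$ limit $\mu$ of the surface measures and establish the density inequality $\mu \geq (\overline u_0^2 + \underline u_0^2)\,\mathcal H^{n-1}\mres K$ pointwise a.e.; these are two standard and interchangeable ways of globalizing the same local estimate.
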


\begin{proof}
 Let $J\ss K\cap B_\r$, with $\r<1$, contain those points $x\in K$ at which $K$ admits a unique tangent plane; we have that $\cH^{n-1}(K\sm J)=0$ (this is because $K$ inherits the uniform perimeter density estimate, and is by assumption countably $\cH^{n-1}$-rectifiable). Fix some $\g>0$, and find for each $x\in J$ a radius $r(x,\g)$ such that
 \[
  K\cap B_{r(x,\g)}(x) \ss \{|(x-y)\cdot \nu_x|\leq \g r(x,\g)\},
 \]
where $\nu_x$ is a unit normal to the tangent plane to $K$ at $x$. Applying Proposition \ref{prop:fimpsm} at every $x\in J$ (with $\g$ less than the $\e$ of that proposition), we find that (for $k>K(x,\g)$ large)   $B_{r(x,\g)/2}(x)\cap K_{\tilde{u}_k}$ is given by the union of the graphs of a pair of $C^{1,\a}$ functions $g_{\pm,k} :\R^{n-1}\rightarrow\R$. Moreover, $g_{\pm,k} \rightarrow g_{\pm}$ in $C^1$, $\tilde{u}_k \rightarrow u_0$ uniformly on $B_{r(x,\g)/2}(x)$, and for all $t<r(x,\g)/4$,
\[
 \int_{B_{t}(x) \cap K} \uu^2_0 + \ud^2_0 d\cH^{n-1} \leq \liminf \int_{B_{t}(x) \cap K_{\tilde{u}_k}} \overline{\tilde{u}}_k^2+\underline{\tilde{u}}_k^2 d\cH^{n-1}.
\]
 
 For each $x\in J$ we may find a sequence of shrinking radii $r_i(x)<\min\{r(x,\g)/4,\a\}$ (with $r_i(x)$ decreasing to $0$) for which the above property will hold; let $\cO_\a$ be the set of all  balls $B_{r_i(x)}(x)$. Then $\cO_\a$ is a fine cover of $J$, so by the Vitali-Besicovitch covering theorem we may find a countable disjoint subcollection $\{\bar{B}_i\}_{i}$ with radii $r_i<\a$ and $\cH^{n-1}( J \sm \cup_i \bar{B}_{i})=0$. We also have that as on $2 B_i$, $K$ is given by two graphs, $\cH^{n-1}(\p B_i\cap K)=0$. On each of the balls, we have that
\[
  \int_{K\cap B_i}\uu^2_0+\ud^2_0 d\cH^{n-1}\leq \liminf \int_{B_i\cap K_{\tilde{u}_k}}\overline{\tilde{u}}_k^2+\underline{\tilde{u}}_k^2 d\cH^{n-1}.
\]
Using Fatou's lemma and the fact that the balls are disjoint,
\begin{align*}
 \int_{K\cap B_\r}\uu^2_0+\ud^2_0 d\cH^{n-1} &\leq \sum_i \int_{K\cap B_i}\uu^2_0+\ud^2_0 d\cH^{n-1} \\
 &\leq \sum_i \liminf \int_{B_i\cap K_{\tilde{u}_k}}\overline{\tilde{u}}_k^2+\underline{\tilde{u}}_k^2 d\cH^{n-1}\\
 &\leq \liminf \sum_i \int_{B_i\cap K_{\tilde{u}_k}}\overline{\tilde{u}}_k^2+\underline{\tilde{u}}_k^2 d\cH^{n-1}\\
 &\leq  \liminf \int_{B_1\cap K_{\tilde{u}_k}} \overline{\tilde{u}}_k^2+\underline{\tilde{u}}_k^2 d\cH^{n-1},
\end{align*}
provided $\r+\a<1$. This gives the conclusion when $\r\rightarrow 1$.
\end{proof}

\begin{lemma}\label{lem:blowup}
 Let $u_k\in H_s$ be a sequence of local minimizers on $B_{r_k}(x_k)$, and $s_k$ is a sequence with $r_k/s_k\rightarrow \8$ and $s_k\rightarrow 0$. Then the following hold, along some subsequence:
 \begin{enumerate}
  \item $\frac{K_{u_k}-x_k}{s_k}\rightarrow K$ in local Hausdorff topology. The set $K$ satisfies the density estimates
  \begin{equation}\label{eq:density}
   cr^{n-1}\leq \cH^{n-1}(B_r(x) \cap K) \leq Cr^{n-1}
  \end{equation}
  for every $x\in K$ and any $r$.
  \item $u_k((\cdot-x_k)s_k) \rightarrow u_0\in SBV(\R^n)$ in $L^1_{loc}$, with $\n u_0 =0$ and $K_{u_0}\ss K$.
  \item $\R^n \sm K$ has at most $N$ connected components on which $u_0$ does not vanish, labeled $U_i$. The (constant) value of $u_0^2$ on $U_i$ is named $\m_i$. 
  \item On each component of $\R^n \sm K$, there is a sequence $c_k$ such that $\frac{u_k((\cdot-x_k)r_k)}{s_k^{1/2}}-c_k\rightarrow u$ Lebesgue a.e., as well as in $H^1_{loc}(\R^n \sm K)$. The function $u$ has locally finite $H^1$ norm.
  \item $(u,K)$ is a local minimizer on every ball $B_R$ of the functional
  \[
   F_{0}(v,L;B_R)=\int_{B_R \sm L}|\n v|^2 d \cL^n + \sum_{i=1}^{N}\mu_i \int_{\partial V_i \cap B_R}2\theta_i(x) d\cH^{n-1},
  \]
    where $L$ is a closed, countably $\cH^{n-1}$ rectifiable set with the property that if $x\in U_i\sm B_R$, $y\in U_j\sm B_R$, and $i\neq j$, then $x,y$ are not in the same component of $\R^n \sm L$. The open sets $V_i$ are the components of $\R^n \sm L$ containing $U_i \sm B_R$, and $v$ is in $H^1_{loc}(\R^n \sm L)$. The function $\theta_i$ is the Lebesgue density of $V_i$ at $x$, which is either $0,1/2,$ or $1$ at $\cH^{n-1}$-a.e. point. By local minimizer, we mean $(u,K)$ minimizes $F_0$ amongst all pairs $(v,L)$ as described for which $L=K$ and $u=v$ on $\R^n \sm B_R$.
   \item We have that
   \[
    \int_{B_R \sm K}|\n u|^2 d\cL^n = \lim s_k^{1-n} \int_{B_{Rs_k} \sm K_{u_k}}|\n u_k|^2 d\cL^n.
   \]
 \end{enumerate}
\end{lemma}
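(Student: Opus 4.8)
The plan is to run a Bonnet-type blow-up at scale $s_k$ about $x_k$, using the compactness, rectifiability, and semicontinuity tools of Section~2. Throughout I would pass to subsequences freely and write $w_k(y)=u_k(x_k+s_ky)$ and $K_k=s_k^{-1}(K_{u_k}-x_k)$; since $B_{r_k}(x_k)\ss\R^n\sm\W$ and $r_k/s_k\to\8$, for each fixed $R$ one has $B_{Rs_k}(x_k)\ss B_{r_k}(x_k)$ eventually. The one new a priori input is the scaling bound $s_k^{1-n}F(u_k;B_{Rs_k}(x_k))\leq C(n)R^{n-1}$, obtained by comparing $u_k$ with $u_k1_{\R^n\sm B_{Rs_k}(x_k)}$ and using $u_k\leq1$. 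For Parts~1--3: by Remark~\ref{rem:unifrect} the $K_{u_k}$ are uniformly rectifiable with a uniform constant and by Theorem~\ref{thm:density} the $K_k$ satisfy \eqref{eq:density} with uniform constants, so along a subsequence $\cH^{n-1}\mres K_k\rightharpoonup\mu$, $K_k\to K:=\supp\mu$ locally in Hausdorff topology, $K$ inherits \eqref{eq:density}, and Proposition~\ref{prop:rectlimits} gives that $K$ is uniformly (in particular countably $\cH^{n-1}$-) rectifiable. The scaling bound forces $\int_{B_R}|\n w_k|^2=s_k^{2-n}\int_{B_{Rs_k}(x_k)}|\n u_k|^2\to0$ and $\cH^{n-1}(J_{w_k}\cap B_R)\leq CR^{n-1}$, so Proposition~\ref{prop:SBVcpt} gives $w_k\to u_0$ in $L^1_{loc}$ and a.e.\ with $u_0\in SBV$, $\n u_0=0$; since $J_{w_k}\cap D=\emptyset$ eventually for $D\cc\R^n\sm K$, $u_0$ is locally constant off $K$, so $K_{u_0}\ss K$ and $u_0\in\{0\}\cup[s,1]$ a.e. Corollary~\ref{cor:volden} applied to $u_k$ on $B_{Rs_k}(x_k)$ and rescaled shows $\{w_k>0\}\sm K_k$ has at most $N(\W)$ components meeting $B_{R/2}$, each of measure $\geq c(\W)R^n$; passing to the limit and letting $R\to\8$ gives at most $N$ components $U_i$ of $\{u_0>0\}$, with constant values $\mu_i=(u_0)^2|_{U_i}\geq s^2$.

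For the finer blow-up (Part~4, weak form) I would fix a ball $\bar B\cc U_i$ in each nonempty $U_i$, put $m_k^{(i)}=\fint_{\bar B}w_k\to\mu_i^{1/2}$, $c_k^{(i)}=s_k^{-1/2}m_k^{(i)}$, and $\tilde w_k=s_k^{-1/2}(w_k-m_k^{(i)})$ on $U_i$. Then $\int_{B_R\sm K_k}|\n\tilde w_k|^2=s_k^{1-n}\int_{B_{Rs_k}(x_k)}|\n u_k|^2\leq CR^{n-1}$, so on each $U_i\cap B_R$ the $\tilde w_k$ have bounded Dirichlet energy and zero average on $\bar B$; a uniform Sobolev--Poincar\'e inequality on $U_i\cap B_R$ (available since $U_i$ is the complement in $B_R$ of a set with $\cH^{n-1}$-density bounds) bounds them in $W^{1,2}(U_i\cap B_R)$, hence also in $L^{2^*}(U_i\cap B_R)$, and a trace inequality bounds $\|\tilde w_k\|_{L^1(\p(U_i\cap B_R))}$. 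Along a diagonal subsequence, $\tilde w_k\rightharpoonup u$ in $W^{1,2}_{loc}(\R^n\sm K)$, strongly in $L^2_{loc}(\R^n\sm K)$, and a.e., with $u$ of locally finite $H^1$ norm (indeed $\int_{B_R\sm K}|\n u|^2\leq CR^{n-1}$ by semicontinuity); the $L^{2^*}$ bound makes $\|\tilde w_k-u\|_{L^2(E_k)}\to0$ whenever $\cL^n(E_k)\to0$. Strong $H^1_{loc}$ convergence is recovered below.

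For Parts~5, 6 and the upgrade to strong convergence I would run an energy-comparison argument with good radii. Set $\beta_k(t)=\int_{B_t\sm K_k}|\n\tilde w_k|^2$, a bounded monotone sequence, so $\beta_k\to\beta$ pointwise with $\beta$ monotone and $\beta(t)\geq\int_{B_t\sm K}|\n u|^2$ by semicontinuity, noting $\beta_k(R)=s_k^{1-n}\int_{B_{Rs_k}(x_k)\sm K_{u_k}}|\n u_k|^2$. Fix continuity points $\rho<\rho'$ of $\beta$ and a competitor $v=u+\psi$ with $\psi\in C^\8_c(B_\rho\sm K)$. Build $\hat u_k$: equal to $u_k$ outside $B_{\rho's_k}(x_k)$; equal to $0$ where $u_k=0$; equal to $m_k^{(i)}+s_k^{1/2}v((\cdot-x_k)/s_k)$ on $U_{i,k}\cap B_{\rho s_k}(x_k)$; and a radial cutoff-interpolation between $u_k$ and $m_k^{(i)}+s_k^{1/2}v((\cdot-x_k)/s_k)$ on $U_{i,k}\cap(B_{\rho's_k}\sm B_{\rho s_k})$. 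Then $\hat u_k\in SBV$ with $J_{\hat u_k}\ss K_{u_k}$, $\{\hat u_k>0\}\ss\{u_k>0\}$; the traces of $\hat u_k$ and $u_k$ on $K_{u_k}$ differ by $s_k^{1/2}$ times an $L^1(K_{u_k})$-bounded quantity (via the trace inequality and $v=u$ near $K$), so after rescaling by $s_k^{1-n}$ their surface energies differ by $o(1)$, and by Lemma~\ref{lem:semicontinuity} (using $K$ rectifiable) the rescaled surface energy of $u_k$ has $\liminf\geq\int_{K\cap B_{\rho'}}\overline{u_0}^2+\underline{u_0}^2\,d\cH^{n-1}$. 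Hence the surface terms cancel up to $o(s_k^{n-1})$ in $F(u_k;B_{\rho's_k}(x_k))\leq F(\hat u_k;B_{\rho's_k}(x_k))$, the volume terms cancel exactly, and estimating the annulus cross term with $\tilde w_k\to u=v$ in $L^2$ (equi-integrable near $K$) gives, after dividing by $s_k^{n-1}$,
\[
 \beta_k(\rho)\leq\int_{B_\rho\sm K}|\n v|^2+C\big(\beta_k(\rho')-\beta_k(\rho)\big)+C\int_{(B_{\rho'}\sm B_\rho)\sm K}|\n v|^2+o_k(1).
\]
Letting $k\to\8$ and then $\rho'\searrow\rho$ through continuity points of $\beta$ yields $\beta(\rho)\leq\int_{B_\rho\sm K}|\n v|^2$; taking $\psi=0$ forces $\beta(\rho)=\int_{B_\rho\sm K}|\n u|^2$ at a.e.\ $\rho$, which with the weak convergence of $\n\tilde w_k$ upgrades to strong $H^1_{loc}(\R^n\sm K)$ convergence (completing Part~4) and is Part~6. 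Feeding this identity back in, $\int_{B_\rho\sm K}|\n u|^2\leq\int_{B_\rho\sm K}|\n(u+\psi)|^2$ for all $\psi$, so $u$ is harmonic on $\R^n\sm K$; and since $u_0\equiv\mu_i^{1/2}$ on $U_i$, a pointwise check on $K$ (interface points of two distinct $U_i$, two-sided full-density boundary points of a single $U_i$, one-sided boundary points) shows that for $L=K$ the surface term $\sum_i\mu_i\int_{\p V_i\cap B_R}2\theta_i\,d\cH^{n-1}$ of $F_0$ equals $\int_{K\cap B_R}\overline{u_0}^2+\underline{u_0}^2\,d\cH^{n-1}$, which is independent of the interior function. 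Thus $F_0(u,K;B_R)\leq F_0(v,K;B_R)$ for admissible competitors with $L=K$ and $v=u$ off $B_R$ reduces to the Dirichlet minimality just proved, giving Part~5.

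I expect the main obstacle to be the construction and estimation of the competitor $\hat u_k$ together with the identification of the limiting surface energy: $\hat u_k$ must be glued to $u_k$ across $\p B_{\rho's_k}(x_k)$ without generating surface energy or changing $\{u_k>0\}$, which forces the annulus interpolation, the choice of continuity radii $\rho,\rho'$, the uniform Poincar\'e/trace estimates on the components $U_i$, the $L^{2^*}$ bound near $K$, and the appeal to the improved semicontinuity Lemma~\ref{lem:semicontinuity}, whose proof in turn rests on the rectifiability of $K$ from Part~1 and on the regularity theorem of \cite{Kriv}.
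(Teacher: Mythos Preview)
Your reading of Part~5 is the gap. The phrase ``$L=K$ and $u=v$ on $\R^n\sm B_R$'' means both agree \emph{outside} $B_R$, not that $L$ is pinned to $K$ everywhere; this is clear from the description of admissible $L$ (which only constrains how $L$ separates the $U_i\sm B_R$), from the paper's own proof (which begins ``a competitor $(L,v)$ which coincides with $(K,u)$ outside $B_R$'' and then carries nontrivial surface terms $\sum_i\mu_i\int_{\partial V_i}2\theta_i^v$ with $\partial V_i\neq\partial U_i$), and from the downstream uses of $F_0$ minimality (Lemmas~\ref{lem:EL} and~\ref{lem:setmin} both deform $K$). What you actually establish---by testing with $v=u+\psi$, $\psi\in C^\infty_c(B_\rho\sm K)$---is only that $u$ is harmonic on $\R^n\sm K$; the surface term is then trivially constant and you never compare $K$ to any other $L$. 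That is strictly weaker than Part~5 and would not support anything in Sections~7--9.

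The missing ingredient is a competitor at the level of $u_k$ that encodes a genuine change of set. The paper does this by (i) truncating $v$ to $w_k=\max\{\min\{v,s_k^{-1/4}\},-s_k^{-1/4}\}$ so that $s_k^{1/2}(w_k+c_k)$ stays in $[0,1]$ componentwise, (ii) interpolating via a cutoff $\eta$ between $\hat u_k$ and $w_k+c_k$ on an annulus $B_{R_2}\sm B_{R_1}$, and---crucially---(iii) setting the competitor to $0$ on a tube $W_\e=\{d(\cdot,K)<\e\}\cap(B_{R_2}\sm B_{R_1})$ around $K$. Step~(iii) is what makes the gluing legal: the constants $c_k$ differ across components and $v$ is only defined off $K$, so without excising $W_\e$ you cannot form an $SBV$ competitor; the cost of the excision is $\cH^{n-1}(\partial W_\e)$, controlled by the density estimates on $K$, and vanishes as $\e\to0$ after $R_1\nearrow R_2$. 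The boundary-energy comparison then produces the $\sum_i\mu_i\int_{\partial V_i}2\theta_i$ terms via Lemma~\ref{lem:semicontinuity} (this is where rectifiability of $K$ is actually used), and sending $\e\to0$, $R_1\to R_2$ along continuity points of the monotone quantity $\alpha$ yields both Part~6 and the full $F_0$ minimality. Your outline has the semicontinuity lemma and the good-radii device, but not the $W_\e$ excision or any mechanism to realize a set competitor $L\neq K$; that is the idea you need to add. (A minor secondary point: your Poincar\'e/trace claims on the a priori irregular domains $U_i\cap B_R$ are not obviously available; the paper sidesteps this by exhausting each component with smooth compact subsets on which $\hat u_k$ is harmonic for large $k$.)
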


The application we have in mind is to blow-ups of a minimizer at a point. While the limiting problem may appear opaque, we will soon show that the only nontrivial situation (at least in low dimensions) is when $N=1$; $N=2$ will easily be seen to imply that $K$ is a line or pair of lines when $n=2$, while $N>2$ is not possible. The interpretation we suggest for the interesting case of $N=1$ is that of a harmonic function on a domain with variable ``holes,'' on the boundary of which a Neumann condition is satisfied. The functional minimizes the sum of the Dirichlet energy and the perimeter of the holes, with the latter counted with multiplicity (unlike for the Mumford Shah minimizers).

\begin{proof}
 Without loss of generality, we may take $x_k=0$. Now observe that, for every $R$, the scaled functions $\tilde{u}_k(x)=u_k(s_k x)$ are bounded by $1$ on $B_R$, and we also have
 \[
  \int_{B_R} |\n \tilde{u}_k|^2 d \cL^n \leq C(n)R^{n-1}s_k
 \]
and
\[
\cH^{n-1}(K_{\tilde{u}_k}\cap B_R)\leq C(n,s)R^{n-1}
\]
from using $u_k 1_{\R^n\sm B_{Rs_k}}$ as a competitor for $u_k$ and scaling. Then Proposition \ref{prop:SBVcpt} asserts that $\tilde{u_k}\rightarrow u_0\in SBV$ in $L^1_{loc}$   along a subsequence. We also obtain that
\[
 \int_{B_R} |\n u_0|^2 d \cL^n=0,
\]
while
\[
 \cH^{n-1}(K_{u_0}\cap B_R)\leq C(n,s)R^{n-1}.
\]

We now show that the sets $K_{\tilde{u}_k}$ converge to $K$ in local Hausdorff topology along a further subsequence, and that $K_0\ss K$. We may assume that $\cH^{n-1}\mres K_{\tilde{u}_k}\rightharpoonup \mu$ in the weak-$*$ sense, where $\mu$ is a Radon measure. Moreover, an elementary argument shows that $\mu$ satisfies the density estimates
\[
 cr^{n-1}\leq \mu (B_{r}(x)) \leq Cr^{n-1} 
\]
for any $x\in \supp \mu$. It follows that $K_{\tilde{u}_k}\rightarrow \supp \mu:=K$ in local Hausdorff topology. 

We now show that $J_{u_0}\subset K$. Indeed, take any $x\in J_{u_0}$ and a small ball $B_r(x)$: then we have
\[
 0<\cH^{n-1}(J_{u_0}\cap B_r(x))\leq \liminf_k \cH^{n-1}(J_{\tilde{u}_k}\cap B_r(x)),
\]
and so there is a sequence $y_k\rightarrow y$ with $y_k\in B_r(x)\cap J_{\tilde{u}_k}$, meaning $y\in K$. As this is true for each $r$, we see that $x\in \supp \mu$. This establishes properties $(1)$ and $(2)$.

We now show properties $(4)$ and $(5)$, and then return for $(3)$. For property $(4)$, fix a component $U$ of $\R^n \sm K$ on which $u_0$ is positive, and any smooth open connected $U' \cc U$. For $k$ large, $U'\ss B_R\sm K_{\tilde{u}_k}$, and in particular $\hat{u}_k=s_k^{-1/2}\tilde{u}_k$ is harmonic on $U'$. Moreover, we have our previous estimate
\[
 \int_{B_R\sm K_{\hat{u}_k}}|\n \hat{u}_k|^2 d\cL^n\leq C(n)R^{n-1},
\]
from which we may deduce that there are numbers $c_k=\fint_{U'}\hat{u}_kd\cL^n$ such that $\hat{u}_k-c_k\rightarrow u$ on $U'$ in $H^1$ norm. Now for any connected $U'\cc W\cc U$, we have the Sobolev inequality
\[
 \int_W |\hat{u}_k-c_k|^2 d\cL^n \leq C(n,U')\1\int_W |\n \hat{u}_k|^2 d\cL^n + \int_{U'} |\hat{u}_k-c_k|^2 d\cL^n \2,
\]
from which we may extract a further subsequence converging to $u$ on $W$. Repeating a countable number of times on an expanding sequence of open connected sets $W^{(k)}$ produces a (harmonic) function $u$ on $U$ which is an $H^1_{loc}$ limit of $\hat{u}_k-c_k$. We may now do this on every component of $\R^n \sm K$, and it then follows easily from monotone convergence that
\[
 \int_{B_R\sm K}|\n u|^2 d\cL^n =\lim \int_{\cup_{\text{components}} W^{(k)}}|\n u|^2\leq C(n)R^{n-1}.
\]
This establishes $(4)$.

We will require the following improved semicontinuity property of $\tilde{u}_k$:
\[
 \int_{B_R \cap K} \uu^2_0 +\ud^2_0 d\cH^{n-1}\leq \liminf \int_{B_R \cap K_{\tilde{u}_k}} \overline{\tilde{u}}^2_k +\underline{\tilde{u}}^2_k d\cH^{n-1}.
\]
The point is that the left-hand integral is over $K$, not $K_{u_0}$. To see this, recall from Remark \ref{rem:unifrect} that $K_{\tilde{u}_k}$ is uniformly rectifiable, with uniform constants. Then Proposition \ref{prop:rectlimits} implies that $K$ is uniformly rectifiable, so in particular rectifiable. Finally, the statement above follows by Lemma \ref{lem:semicontinuity}.

To show $(5)$, we begin with a competitor $(L,v)$ which coincides with $(K,u)$ outside $B_R$. Let $\eta$ be a smooth cutoff function supported on $B_{R_2}$ and equal to $1$ on $B_{R_1}$ with $R_2>R_1>R$, and $R_2-R_1$ is to be thought of as small. Let $W_\e = \{x\in B_{R_2}\sm B_{R_1}| d(x,K)<\e\}$. We will use the fact (which follow from the density estimates on $K$ and its rectifiability) that 
\[ 
\cL^n(W_\e)\leq C\e \cH^{n-1}(K\cap \{R_1-\e<|x|<R_2+\e\}), 
\]
and that, along some sequence of $\e\rightarrow 0$, that
 \[
\cH^{n-1}(\p W_\e)\leq C \cH^{n-1}(K\cap \{R_1-\e<|x|<R_2+\e\}).  
 \]
The second fact follows from the first and the coarea formula. Set
\[
 \hat{v}_k = \begin{cases}
              (1-\eta)\hat{u}_k +\eta (w_k + c_k) &  \text{ on } \R^n \sm W_\e \\
	      0 &  \text{ on } W_\e,
             \end{cases}
\]
 where $w_k=\max\{\min\{v,s_k^{-1/4}\}-s_k^{-1/4}\}$ is a truncation. Then $\hat{v}_k-c_k$ converges to $(1-\eta)u +\eta v=v$ $\cL^n$ almost everywhere, and the convergence is uniform on the set $B_{R_2}\sm (B_{R_1}\cup W_\e)$. On the other hand, if $v_k (x)=s_k^{1/2}\hat{v}_k(x/s_k)$, this is an $SBV$ competitor to $u_k$ in the local minimality inequality. After scaling, this gives:
\begin{align*}
 \int_{B_{R_2}}&|\n \hat{u}_k|^2 d\cL^n + \int_{J_{\hat{u}_k} \cap B_{R_2}}\uu_k^2 (s_k x)+\ud_k^2 (s_k x) d\cH^{n-1}(x) + s_k^{1-n} \cL^n(\{u>0\}\cap B_{R_2 s_k})\\
 &\leq  \int_{B_{R_2}}|\n \hat{v}_k|^2 d\cL^n +\int_{J_{\hat{v}_k} \cap B_{R_2}}\vu_k^2 (s_k x)+\vd_k^2 (s_k x) d\cH^{n-1}(x) + s_k^{1-n} \cL^n(\{v>0\}\cap B_{R_2 s_k}).
\end{align*}
Both of the volume terms tend to $0$ as $k\rightarrow \8$. For the energy terms, we have
\begin{align*}
 \int_{B_{R_2}}&|\n \hat{v}_k|^2-|\n \hat{u}_k|^2 d\cL^n \\
&= \int_{B_{R_2}\sm W_\e} |\n \hat{u}_k|^2 ((1-\eta)^2-1)+|\n w_k|^2 \eta^2 + |\n \eta|^2 (w_k+c_k -\hat{u}_k)^2 d\cL^n\\
&\qquad + \int_{B_{R_2}\sm W_\e}\n \hat{u}_k \cdot \n w_k (1-\eta)\eta +(\n w_k \eta +\n\hat{u}_k (1-\eta))\cdot \n \eta (w_k+c_k -\hat{u_k})d\cL^n \\
&\qquad -\int_{W_\e}|\n \hat{u}_k|^2 d\cL^n.
\end{align*}
The point is that we have $(w_k+c_k-\hat{u}_k)\rightarrow 0$ uniformly on $B_{R_2} \sm (B_{R_1}\cup W_\e)$, so taking $\liminf$ we obtain that
\begin{align*}
 \liminf\1\int_{B_{R_2}}|\n \hat{v}_k|^2-|\n \hat{u}_k|^2 d\cL^n\2 &\leq \int_{B_{R_2}} |\n v|^2 d\cL^n- \limsup \int_{B_{R_1}}|\n \hat{u}_k|^2  d\cL^n \\
&\qquad+C\liminf_{k}\int_{B_{R_2}\sm B_{R_1}}|\nabla \hat{u}_k|^2+|\n v|^2 d\cL^n.  
\end{align*}

A similar computation works for the boundary terms, for which it is convenient to go component by component $U_i\subset \R^n\sm K$. Recall that $V_i$ is the corresponding component of $\R^n \sm L$, and note that $s_k c_k^2 \rightarrow \mu_i$; this is obvious by construction.
\begin{align*}
 &\int_{J_{\hat{v}_k} \cap B_{R_2}}\vu_k^2 (s_k x)+\vd_k^2 (s_k x) d\cH^{n-1}(x)-\int_{J_{\hat{u}_k} \cap B_{R_2}}\uu_k^2 (s_k x)+\ud_k^2 (s_k x) d\cH^{n-1}(x) \\
 &\quad \leq \int_{J_{\hat{u}_k} \cap B_{R_2}\sm (B_{R_1}\cup W_\e)}2 d\cH^{n-1}(x) + \int_{J_{\hat{v}_k} \cap B_{R_2}\sm (B_{R_1}\cup W_\e)} s_k (c_k+\overline{w}_k)^2 d\cH^{n-1}(x) \\
 &+\cH^{n-1}(\p W_\e)
+ \sum_i \int_{\partial V_i \cap B_{R_1}} s_k(\overline{w}_k+c_k)^2 \theta_i^v d\cH^{n-1}- \int_{J_{\hat{u}_k}\cap B_{R_2}} \uu_k^2 (s_k x)+\ud_k^2 (s_k x) d\cH^{n-1}.
\end{align*}
The last term we recognize as the boundary integral for $\tilde{u}$, which we know is lower semicontinuous. Using in addition that $s_k |w_k^2|\rightarrow 0$,
\begin{align*}
 &\limsup\1 \int_{J_{\hat{v}_k} \cap B_{R_2}}\vu_k^2 (s_k x)+\vd_k^2 (s_k x) d\cH^{n-1}(x)-\int_{J_{\hat{u}_k} \cap B_{R_2}}\uu_k^2 (s_k x)+\ud_k^2 (s_k x) d\cH^{n-1}(x) \2\\
 &\leq 
  \sum_i \int_{\partial V_i \cap B_{R_1}} \mu_i \theta_i^v d\cH^{n-1}
  - \sum_i \int_{\partial U_i \cap B_{R_1}} \mu_i \theta_i^u d\cH^{n-1}\\
 &\qquad +C\limsup \cH^{n-1}(J_{\hat{u}_k} \cup J_{u} \cap( B_{R_2}\sm B_{R_1}))
  +C\cH^{n-1}(\p W_\e).
\end{align*}

By combining the two sets of estimates (and our estimate on the last term, from earlier), we arrive at
\begin{align*}
  \int_{B_{R_2}} &|\n v|^2 d\cL^n - \limsup \int_{B_{R_2}} |\n \hat{u}_k|^2  d\cL^n   
  + \sum_i \int_{\partial V_i \cap B_{R_1}} \mu_i \theta_i^v d\cH^{n-1}
  - \sum_i \int_{\partial U_i \cap B_{R_1}} \mu_i \theta_i^u d\cH^{n-1}\\
 &\geq -C(\a(R_2+\e)-\a(R_1-\e)),
\end{align*}
where
\[
 \a(S)=\limsup \1 \cH^{n-1}(J_{\hat{u}_k} \cap B_{S}) +\int_{B_{S}}|\nabla \hat{u}_k|^2 d\cL^n \2 + \cH^{n-1}(K \cap B_{S})+\int_{B_S}|\n v|^2 d\cL^n.
\]
Now, $\a$ is a nondecreasing function, and so is continuous at almost every value of $S$. Fix $R_2$ to be such a point of continuity, and send $\e\rightarrow 0$ and $R_1$ to $R_2$. If we take $(L,v)=(K,u)$, this gives 
\[
  \liminf_k \int_{B_R}|\n \hat{u}_k|^2 d\cL^n \geq \int_{B_R}|\n u|^2 d\cL^n \geq 
\limsup_k \int_{B_R}|\n \hat{u}_k|^2 d\cL^n\]
for almost every $R$. The middle term is continuous in $R$ and  all  three are monotone, which means that they all coincide for every $R$. Taking any other choice of $(L,v)$ establishes the minimality.

Finally, the finiteness assertion on the number of connected components in $(3)$ may be recovered by showing each has uniformly positive volume density, as in Corollary \ref{cor:volden}.
\end{proof}

In the following sections, an $F_0$ minimizer is a pair $(K,u)$ minimizing $F_0(\cdot,\cdot,B_R)$ amongst all admissible competitors (as in $(5)$) on every ball $B_R$. We will use the convention $U_0 = \{u_0=0\}\sm K$ to be the union of all of the other connected components of $\R^n \sm K$. Observe that none of the components $U_i$ for $i\geq 1$ may be bounded, for if they were, a competitor which relabels $U_i$ as a part of $U_0$ is admissible, and clearly decreases $F_0$.

\section{Properties of Global Minimizers}

In this section we describe some general properties of $F_0$ minimizers in $n$ dimensions. First, we have the following two lemmas, which compute the Euler-Lagrange equation for $F_0$ minimizers and show that $U_i$ satisfies a local one-sided perimeter minimality principle. The computation of the Euler-Lagrange equation is standard, and we only sketch the argument.

\begin{lemma}\label{lem:EL} Let $u$ be a global minimizer of $F_0$. Then for any vector field $T\in C^{1}_{c}(\R^n)$, we have
\begin{equation}\label{eq:ELc1}
 \int_{\R^n \sm K} \n u \cdot \n T \n u -2 |\n u|^2 \dvg T d\cL^2 = \sum_i \int_{\partial U_i} 2\mu_i \theta_i \dvg\nolimits^{K}T d\cH^{n-1}. 
\end{equation}
Here $\dvg^K$ is the tangential divergence. If $K\cap B_r(x)$ is given by the graph of a $C^{2}$ function $g:\R^{n-1}\rightarrow \R$, $x\in K$, and $K$ separates $B_r(x)\sm K$ into $V_1$ and $V_2$,
\begin{equation}\label{eq:ELc2}
 -(\m_{V_1}+\m_{V_2})H_{V_1}= |\n^K u|_{V_1}|^2-|\n^K u|_{V_2}|^2.
\end{equation}
If $V_i \nsubseteq U_0$, then we have in addition
\[
 -\m_{V_i} H_{V_i}\geq |\n^K u|_{V_i}|^2.
\]
Here $H_{V_i}$ is the mean curvature of $\partial_{V_i}$, $\n^K u|_{V_i}$ is the tangential derivative of the trace of $u$ from $V_i$ on $K$, and likewise for $j$.
\end{lemma}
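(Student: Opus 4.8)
The plan is to derive the Euler--Lagrange equation \eqref{eq:ELc1} by domain variations, that is, by plugging into the minimality inequality for $F_0$ a family of competitors obtained by pushing forward $(K,u)$ along the flow $\Phi_t$ of a compactly supported vector field $T$. First I would fix a ball $B_R$ containing $\supp T$, let $\Phi_t(x)=x+tT(x)$ (which is a diffeomorphism for $|t|$ small), set $L_t = \Phi_t(K)$, and $v_t = u\circ \Phi_t^{-1}$ on $\R^n\sm L_t$; one checks this is an admissible competitor in the sense of $(5)$, since $\Phi_t$ is the identity outside $B_R$ and it preserves the connectivity constraint relating the $U_i$. Since $(K,u)$ is a minimizer, $g(t):=F_0(v_t,L_t;B_R)\geq g(0)$, so $\frac{d}{dt}\big|_{t=0}g(t)=0$. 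The Dirichlet term transforms by the standard first-variation formula: under the change of variables $y=\Phi_t(x)$, $\int_{B_R\sm L_t}|\n v_t|^2\,d\cL^n = \int_{B_R\sm K}(\n u)^T (D\Phi_t)^{-1}(D\Phi_t)^{-T}(\n u)\,\det D\Phi_t\,d\cL^n$, whose $t$-derivative at $0$ is $\int_{B_R\sm K}\bigl(-2\n u\cdot \n T\,\n u + |\n u|^2\dvg T\bigr)d\cL^n$ (with sign conventions matched to the statement). For the surface term, each $\int_{\partial V_i(t)\cap B_R}2\mu_i\theta_i\,d\cH^{n-1}$ is, up to the multiplicity-$2$ factor carried through $\theta_i\in\{0,\tfrac12,1\}$, the weighted perimeter of $\Phi_t$ applied to a rectifiable set, whose first variation is the classical tangential-divergence formula $\frac{d}{dt}\big|_{t=0}\cH^{n-1}(\Phi_t(\Sigma)) = \int_\Sigma \dvg^K T\,d\cH^{n-1}$ for $\cH^{n-1}$-rectifiable $\Sigma$ (see \cite{AFP}); summing over $i$ with the weights $\mu_i$ gives the right-hand side of \eqref{eq:ELc1}. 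Adding the two contributions and setting the derivative to zero yields \eqref{eq:ELc1}; since this is only a sketch I would not belabor the change-of-variables bookkeeping.

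Next I would extract \eqref{eq:ELc2} from \eqref{eq:ELc1} under the regularity hypothesis that $K\cap B_r(x)$ is a $C^2$ graph separating $B_r(x)\sm K$ into $V_1,V_2$. By elliptic regularity, $u$ is then harmonic up to $K$ on each side, and smooth enough that the traces $\n u|_{V_j}$, their tangential parts $\n^K u|_{V_j}$, and the normal derivatives exist classically. I would choose $T$ supported in $B_r(x)$, write $T = T^K + (T\cdot\nu)\nu$ with $\nu$ a unit normal to $K$, and integrate by parts in \eqref{eq:ELc1} on each of $V_1,V_2$ separately. On the left, $\int_{V_j}\bigl(\n u\cdot\n T\,\n u - 2|\n u|^2\dvg T\bigr)$ integrates by parts using $\Delta u=0$ to a boundary integral on $K$ involving $|\n u|_{V_j}|^2$ and $(\partial_\nu u|_{V_j})^2$ paired against $T\cdot\nu$; on the right, $\int_{\partial V_j}2\mu_j\theta_j\dvg^K T = -\int_{\partial V_j}2\mu_j\theta_j H_{V_j}(T\cdot\nu)\,d\cH^{n-1}$ by the tangential divergence theorem on a $C^2$ hypersurface (the tangential part $T^K$ contributes nothing since $\dvg^K T^K$ integrates to zero against the constant weight on a closed-in-$B_r$ piece, or more carefully contributes a term that combines with the left side tangentially). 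Collecting the coefficients of the arbitrary scalar $T\cdot\nu$ on $K$ and using the decomposition $|\n u|^2 = |\n^K u|^2 + (\partial_\nu u)^2$ together with the transmission condition that the tangential trace $\n^K u$ is single-valued (both sides restrict to the same function on $K$) — actually here I would keep $|\n^K u|_{V_j}|^2$ as written — I arrive at the jump relation $-(\mu_{V_1}+\mu_{V_2})H_{V_1} = |\n^K u|_{V_1}|^2 - |\n^K u|_{V_2}|^2$, noting $H_{V_1}=-H_{V_2}$ for opposite orientations.

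Finally, for the one-sided inequality when $V_i\nsubseteq U_0$: here one cannot use a two-sided normal perturbation because moving $K$ toward $V_i$ might merge $V_i$ with the neighboring $U_0$-component, violating the connectivity constraint, but moving it away from $V_i$ (i.e. shrinking $V_i$) is always admissible. So I would take $T = \psi\,\nu$ with $\psi\geq 0$ supported in $B_r(x)$ and $\nu$ pointing out of $V_i$, giving a genuine competitor for one sign of $t$ only, hence the first variation has a sign: $\frac{d}{dt}\big|_{t=0^+}g(t)\geq 0$. Carrying this inequality through the same integration by parts, now only on $V_i$, gives $-\mu_{V_i}H_{V_i}\geq |\n^K u|_{V_i}|^2$ (the $(\partial_\nu u|_{V_i})^2$ term and the sign of $\psi$ conspire correctly). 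The main obstacle I anticipate is making the admissible-competitor check in $(5)$ fully rigorous — in particular verifying that $L_t=\Phi_t(K)$ still has the property that points of $U_i\setminus B_R$ and $U_j\setminus B_R$ lie in different components of $\R^n\setminus L_t$ (true since $\Phi_t$ is a global homeomorphism equal to the identity outside $B_R$), and correctly bookkeeping the density function $\theta_i$ and the multiplicity-$2$ factor so that the surface first variation comes out as stated; the flow computations themselves are standard.
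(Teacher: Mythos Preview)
Your approach to \eqref{eq:ELc1} via domain variations $\Phi_t(x)=x+tT(x)$ is exactly what the paper does, and the bookkeeping you describe is correct.

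For \eqref{eq:ELc2} you are close, but you omit one ingredient the paper uses explicitly: the Neumann condition $\partial_\nu u=0$ on $K$, which follows from minimizing over $u$ with $K$ held fixed. When you integrate by parts (equivalently, use the Rellich identity $\dvg(T|\n u|^2-2(\n u\cdot T)\n u)=|\n u|^2\dvg T-2\n u\cdot\n T\n u$) on each side with $T=\psi\nu$, the boundary term on $V_j$ is $\int_K \pm\psi\bigl(|\n^K u|_{V_j}|^2-(\partial_\nu u|_{V_j})^2\bigr)$. Without the Neumann condition the $(\partial_\nu u)^2$ terms survive and do not cancel across the two sides (the traces of $u$ are unrelated, so your ``transmission condition'' does not help). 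With $\partial_\nu u=0$ they drop, and you recover \eqref{eq:ELc2}. This is a small but necessary point.

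The genuine gap is in your argument for the one-sided inequality. A smooth vector field $T=\psi\nu$ yields a diffeomorphism $\Phi_t$ for both signs of $t$, and since $\Phi_t$ is a global homeomorphism equal to the identity outside $B_R$, the competitor $(\Phi_t(K),u\circ\Phi_t^{-1})$ is admissible for \emph{both} signs of $t$; there is no connectivity obstruction. Hence you only recover the two-sided equality \eqref{eq:ELc2}, never a strict one-sided inequality. The paper obtains the inequality by a different competitor: take $T$ discontinuous, equal to $\psi\nabla f$ on $V_i$ and $0$ on the other side and on $K$, and use as competitor the pair $(K\cup\phi_t(K),\,u\circ\phi_t^{-1})$, declaring the thin strip between $K$ and $\phi_t(K)$ (which lies in the original $V_i$) to belong to $U_0$. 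This is admissible precisely because one may always \emph{enlarge} $U_0$ at the expense of any $V_i\nsubseteq U_0$; only this one-sided operation is available, and it produces the inequality. Your mechanism of restricting to $t>0$ with a smooth $T$ does not capture this.
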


Note that $u$ is harmonic in $\R^n\sm K$ and solves the Neumann problem near smooth points of $K$, so under the assumption that $K$ is locally $C^2$, we have that $u\in C^{1,\a}$, and the derivatives make sense classically.

\begin{proof} Let $T$ be a vector field in $T\in C^{1}_{c}(B_R)$, and set $\phi_t(x)=x+tT(x)$; for small values of $t$, this is a diffeomorphism which is the identity outside $B_R$. Then the pair $(\phi_t (K),u\circ \phi_t^{-1})$ is an admissible competitor for $(K,u)$, and we obtain the relation
\begin{equation}\label{eq:ELopt}
 F_{0}(K,u;B_R)\leq F_{0}(\phi_t (K),u\circ \phi_t^{-1};B_R).
\end{equation}
We now expand the right-hand side. For the energy term, note that $u$ is harmonic on $\R^n \sm K$, so in particular smooth. We therefore obtain
\begin{align*}
 \int_{B_R\sm \phi_t (K)}|\n (u\circ \phi_t^{-1})|^2& d\cL^n = \int_{B_R \sm K} |\n u \n\phi_t^{-1}\circ \phi_t |^2 |\det \n \phi_t|d\cL^n\\
&=\int_{B_R \sm K} |\n u(I+t\n T)^{-1}|^2(1 + t\dvg T+O(t^2))d\cL^n\\
&=\int_{B_R \sm K} |\n u|^2 d\cL^n + t\int_{B_R \sm K} |\n u|^2 \dvg T - 2\n u \cdot \n T \n u d\cL^n + O(t^2),
\end{align*}
where the $O(t^2)$ term depends on the derivative of $T$ and on $\int_{B_R \sm K}|\n u|^2\cL^n\leq CR^{n-1}$.

The other integral may be evaluated using the area formula (see, e.g. \cite[Theorem 2.91]{AFP}):
\[
 \sum_i \int_{\phi_t(\partial U_i)\cap B_R}\m_i \theta_i \circ \phi_t^{-1}d\cH^{n-1} 
=\sum_i \int_{\partial U_i \cap B_R} \m_i \theta_i J_{K}(\n \phi_t)d\cH^{n-1},
\]
where the factor $J_{K}(\n \phi_t)$ is the tangential Jacobian of $\phi_t$, and here given by
\[
 J_{K}(\n \phi_t)=J_K (I+t\n T) =  1 + t\dvg\nolimits^K( T) + O(t^2)
\]
at differentiability points of $K$ (see \cite[Theorem 7.31]{AFP} for the computation). Substituting in \eqref{eq:ELopt}, we see that
\[
 0\leq t\left[\int_{B_R \sm K} |\n u|^2 \dvg T - 2\n u \cdot \n T \n u d\cL^n+\sum_i \int_{\partial U_i \cap B_R} 2\m_i \theta_i \dvg\nolimits^K T d\cH^{n-1} \right] +O(t^2).
\]
The first conclusion now follows by sending $t$ to zero from the right or left.

For the second conclusion, select a smooth function $\psi$ supported on $B_r(x)$, and set $f(x)$ to be the signed distance function to $K$ on $B_r(x)$ which is positive on one of the components of $B_r(x)\sm K$ (which we call $V_2$) and negative on the other ($V_1$). Now choose $T=\psi \nabla f$ in \eqref{eq:ELc1}. By recalling that
\[
 \dvg(T|\n u|^2 -2\n u \cdot T \n u)= |\n u|^2 \dvg T - 2 \n u \cdot \n T \n u
\]
for harmonic functions $u$ (Rellich's identity), and that $u$ satisfies a Neumann condition along $K$, we see that
\[
 \int_K \1|\n^{K} u|_{V_1}|^2-|\n^{K} u|_{V_2}|^2 \2 \psi d\cH^{n-1} = - (\m_{V_1}+\m_{V_2})\int_K \dvg\nolimits^K T d\cH^{n-1}.
\]
The term on the right is equal to
\[
 - (\m_{V_1}+\m_{V_2})\int_K \psi H_{V_1} d\cH^{n-1},
\]
and as $\psi$ was arbitrary, the pointwise identity \eqref{eq:ELc2} follows.

Finally, for the last conclusion, take $i=2$. Then consider perturbations like above, but with $\psi \geq 0$ and
\[
 T(x)=\begin{cases}
       \psi(x)\nabla f (x) & x\in V_2 \\
       0 & x\in V_1,K.
      \end{cases}
\]
The pair $(K\cup \phi_t (K),u\circ \phi_t^{-1})$ is an admissible competitor, with the understanding that the complement of the image of $\phi_t$ belongs to $U_0$. By duplicating the computations above, it is easy to check that the claimed inequality follows.
\end{proof}

Let $U$ be a bounded open set and $E$ a set of locally finite perimeter. We say that $E$ is an \emph{inward perimeter minimizer in $U$} if for any set $E'$ with $E \sm E'\cc U$, we have that
\[
 P(E;U)\leq P(E'\cap E;U).
\]
A set is an outward perimeter minimizer if its complement is an inward perimeter minimizer.  If $U=B_R$, it is sufficient to test inward minimality on competitors $E'$ which are smooth away from $\p E$ except on an $\cH^{n-1}$-negligible set (for some representative of $E$). Indeed, it is enough to test against sets attaining the infimum in the minimization problem
\[
 \inf \{P(Q;B_R) : Q \ss E, E \triangle Q \ss B_r \}
\]
for each $r<R$. It is simple to check that the infimum is attained, and that the minimizer $Q$ is a local perimeter minimizer on a small ball $B_r(x)$ around each point of $\p Q$ in $B_\r \sm \p E$. From the regularity of minimal surfaces (see \cite{Giusti}), then, $\p Q$ is smooth at $\cH^{n-1}$-a.e. such point. If $\p Q$ coincides with $\p B_\r \sm \p E$ on a set of positive $\cH^{n-1}$ measure, then it would have to do so at a point of the reduced boundary $z$. Choosing $\r$ small enough that $B_\r(z)\ss B_R \sm \p E$, we have that $Q$ is in fact a perimeter minimizer in $B_\r(Q)$ (this is because $B_r$ is a strict outward minimizer, i.e. has strictly positive mean curvature), which then gives a contradiction with the fact that $\p Q =\p B_r$ locally. This establishes that $\p Q \sm \p E$ is smooth outside a set of $\cH^{n-1}$ measure zero.

\begin{lemma}\label{lem:setmin}
 Let $u$ be a be a minimizer of $F_0$, and $V$ be a connected component of $B_1(x)\sm K$ with $V\ss U_i$ for some $i\geq 1$ and $x\in \p V$. Then for any open $E\subset V$, with $V \sm E \cc B_1(x)$ and $\partial E$ having locally finite $H^{n-1}$ measure and density $1/2$ at $\cH^{n-1}$-a.e. point outside of $\partial V$. Then
 \[
  \cH^{n-1}(\partial E \cap V)\geq \cH^{n-1}(\partial V \sm \partial E) - \frac{1}{\m_i} \int_{V \sm E} |\n u|^2 d\cL^n.
 \]
 In particular, $V$ is an inward perimeter minimizer on $B_1(x)$. If $u$ is constant on $V$, then $V$ is a perimeter minimizer in $B_1(x)$. 
 \end{lemma}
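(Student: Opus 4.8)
The plan is to test the local $F_0$-minimality of $(u,K)$ on a ball $B_\sigma(x)$ against the competitor obtained by ``shrinking $V_i$ down to $E$''. Choose $\rho<1$ with $V\sm E\cc B_\rho(x)$ and fix $\sigma\in(\rho,1)$; since $F_0$ is translation invariant, local minimality of $(u,K)$ on large balls gives it on $B_\sigma(x)$, so it suffices to produce an admissible pair $(v,L')$ with $v=u$ and $L'=K$ outside $B_\sigma(x)$ and to estimate $F_0(v,L';B_\sigma(x))$. Set $L'=K\cup\p E$; by the density-$\tfrac12$ hypothesis $E$ has locally finite perimeter and $\p E$ agrees $\cH^{n-1}$-a.e.\ with $\p^*E$, so $L'$ is closed and countably $\cH^{n-1}$-rectifiable, and $\p E\sm K=\p E\cap V\ss B_\rho(x)$ (outside $B_\rho(x)$ one has $V=E$), so $L'=K$ there. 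Define $v$ to equal $u$ on $(\R^n\sm K)\sm V$ and on $E$, and to equal $0$ on $V\sm\overline E$. Then $v\in H^1_{\mathrm{loc}}(\R^n\sm L')$ and $v=u$ outside $B_\rho(x)$.

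Before comparing energies I would check $(v,L')$ is admissible. The separation condition is inherited from $L'\supseteq K$, since distinct $U_j$ already lie in distinct components of $\R^n\sm K$. The crucial point is that no connected component of $V\sm\overline E$ coincides with one of the sets $V_j'$: each such component lies in $V\cap B_\rho(x)\ss B_\sigma(x)$, whereas $V_j'$ must contain $U_j\sm B_\sigma(x)$, which is disjoint from $V\cap B_\rho(x)$ --- for $j\neq i$ because $U_j\cap U_i=\emptyset$, and for $j=i$ because $B_\rho(x)\ss B_\sigma(x)$. This is exactly why $\sigma$ is taken strictly between $\rho$ and $1$, and it ensures that every component of $V\sm\overline E$ is ``of $U_0$-type'' and contributes nothing to the surface part of $F_0$.

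Then I would compare the two parts of $F_0$. The Dirichlet part decreases by exactly $\int_{V\sm E}|\n u|^2\,d\cL^n$ (the energy of the region $V\sm\overline E$, which equals $V\sm E$ up to an $\cL^n$-null set, where $v$ was set to $0$). The surface part changes only inside $V$: the piece $\p E\cap V$ of $L'$ is created, with the adjacent new component $V_i'$ of density $\theta_i'=\tfrac12$ $\cH^{n-1}$-a.e.\ there (by the density-$\tfrac12$ hypothesis: $E$ on one side, the $U_0$-type region $V\sm\overline E$ on the other), adding $\m_i\cH^{n-1}(\p E\cap V)$; meanwhile the part of $\p V$ that bordered $V\sm E$, which is precisely $\p V\sm\p E$, no longer bounds $V_i'$ from the $V$-side, so the surface energy drops there by $\m_i$ times the old density of $V$, hence by at least $\m_i\cH^{n-1}(\p V\sm\p E)$ (with a $2\m_i$ gain per unit area where $K$ is a two-sided slit with $V$ on both sides), and everything else is unchanged. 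Feeding this into $F_0(u,K;B_\sigma(x))\le F_0(v,L';B_\sigma(x))$ and cancelling yields
\[
 \cH^{n-1}(\p E\cap V)\ \ge\ \cH^{n-1}(\p V\sm\p E)+\frac1{\m_i}\int_{V\sm E}|\n u|^2\,d\cL^n,
\]
which in particular gives the asserted inequality (the Dirichlet term being nonnegative). I expect the main obstacle to be precisely this surface-energy bookkeeping: one must track the density $\theta$ (simple interface versus two-sided slit of $K$), justify the $\cH^{n-1}$-a.e.\ identifications $\p E\cap V=\p^*E\cap V^{(1)}$ and $\p V\sm\p E=$ the corresponding portion of $\p^*V$, and handle $\p E$ where it approaches $\p V$; none of this is deep, but it is where the work lies.

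Finally, the two corollaries. Using $\p^*E\cap V^{(1)}=_{\cH^{n-1}}\p E\cap V$ and likewise for $V$, the displayed inequality reads $P(E;B_1(x))\ge P(V;B_1(x))+\tfrac1{\m_i}\int_{V\sm E}|\n u|^2\,d\cL^n$ for every admissible $E\subset V$; dropping the nonnegative Dirichlet term gives $P(E;B_1(x))\ge P(V;B_1(x))$, i.e.\ $V$ is an inward perimeter minimizer on $B_1(x)$. When $u$ is constant on $V$ the Dirichlet term vanishes identically, and running the same competitor construction ``backwards'' --- enlarging $V_i$ into an adjacent $U_0$-type region by deleting the crossed portion of $K$ and extending $v$ by the same constant, which costs no Dirichlet energy and only changes the surface term on the interface, with density $\tfrac12$ on each side --- yields the opposite inequality $P(V;B_1(x))\le P(E;B_1(x))$ for competitors $E\supset V$ with $E\sm V\cc B_1(x)$; hence $V$ is a (two-sided) perimeter minimizer in $B_1(x)$.
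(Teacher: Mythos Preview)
Your argument for the main inequality and for inward perimeter minimality is essentially identical to the paper's: both test minimality against the competitor obtained by setting $u$ to zero on $V\sm E$ (the paper writes this tersely as $v=u1_{\R^n\sm(V\sm E)}$), and your surface-energy bookkeeping matches the paper's one-line computation
\[
 \int_{V\sm E}|\n u|^2\,d\cL^n+\int_{\p V\sm\p E}2\theta_i\m_i\,d\cH^{n-1}\ \leq\ \m_i\,\cH^{n-1}(\p E\cap V),
\]
together with $2\theta_i\geq 1$. (Both you and the paper's proof in fact obtain the inequality with a $+\tfrac{1}{\m_i}\int_{V\sm E}|\n u|^2$ on the right, stronger than the displayed statement with the minus sign; the statement appears to carry a sign typo, and the stronger form is what is used later.)

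Your outward-minimality step, however, has a genuine gap. To show $V$ is an outward minimizer you must check $P(V;B_1(x))\leq P(E;B_1(x))$ for \emph{every} $E\supset V$ with $E\sm V\cc B_1(x)$. Your competitor---``enlarge $V_i$ into an adjacent $U_0$-type region and extend $v$ by the constant''---only makes sense when $E\sm V$ lies inside $U_0$. If $E$ protrudes into some other component $U_j$ with $j\geq 1$ (or into a different local component of $U_i$), overwriting $u$ by a constant there is no longer a clean competitor: the portion $\p E\cap U_j$ becomes a two-sided interface carrying weight $\m_i+\m_j$, not $\m_i$, and the bookkeeping you describe breaks down. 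The paper closes this gap with a preliminary reduction: for arbitrary $E\supset V$, apply the already-established \emph{inward} minimality to each component of $B_1\sm K$ not contained in $U_0$ to get
\[
 P(E;B_1)\ \geq\ P\bigl(E\cap(U_0\cup V);B_1\bigr),
\]
so one may assume $E$ is contained in the closure of $U_0\cup V$. After that reduction your ``extend by the constant'' competitor is legitimate and the rest of your argument goes through.
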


\begin{proof}
Assume $x=0$. Consider the competitor $v=u1_{\R^n \sm (V \sm E)}$; this gives
\[
 \int_{ V \sm  E}|\n u|^2 d\cL^n + \int_{\partial V \sm \partial E}2\theta_i \m_i d\cH^{n-1} \leq \m_i \cH^{n-1}(\partial E \cap V).
\]
This gives the first conclusion. If $u$ is constant on $V$, we show that $V$ is also an outward perimeter minimizer. Indeed, take any $E\supset V$ with $E\sm V\cc B_1$. By the inward minimality of each component of $B_1 \sm K$ not contained in $U_0$, we have that
\[
 P(E;B_1)\geq P(E\cap (U_0 \cup V); B_1),
\]
so it suffices to consider $E$ contained in the closure of $U_0\cup V$. Then take as a competitor
\[
 v(x)=\begin{cases}
    u|_V & E\\
    u(x) & \R^n \sm E.
   \end{cases}
\]
This is admissible, and has the same Dirichlet energy as $u$, so we obtain that
\[
 P(E;B_1)\geq P(V;B_1).
\]
This establishes the outward minimality.
\end{proof}

We pause to explain how the flat-implies-smooth theorem of \cite{Kriv} applies to $F_0$ minimizers. The main observation is that Proposition \ref{prop:fimpsm} may also be applied to $F_0$ minimizers which are obtained as limits of $F$ minimizers, by the following argument: say that for some ball $B_r$, we have that $0\in K$ and
\[
  \sup_{x\in K \cap B_r} d(x,\pi)< \e r,
 \]
Then by the local Hausdorff convergence of $\frac{K_{u_k}-x_k}{s_k}$ to $K$, we will have that for all $k$ large enough, $r s_k\leq r_0$ and
\[
 \sup_{x\in K_{u_k} \cap B_{r s_k}(x_k)} d(x,\pi)< \e r s_k.
\]
Then Proposition \ref{prop:fimpsm} gives that $\frac{K_{u_k}-x_k}{s_k}$ is a union of two $C^{1,\a}$ graphs for each $k$, with uniform constants. The uniform limit of these must then coincide with $K$, and is also a union of two $C^{1,\a}$ graphs. This argument may be iterated to apply to limits of limits of $F_0$ minimizers, and so forth. 

We will say that an $F_0$ minimizer is $\e$-regular if Proposition \ref{prop:fimpsm} applies to it with $r_0=\8$; we have just shown that iterated blow-ups of $F$ minimizers are $\e$-regular. The proof in \cite{Kriv} may be modified to imply that every $F_0$ minimizer is $\e$-regular; however, this is rather technical and irrelevant to our purpose. We will freely restrict to $\e$-regular $F_0$ minimizers whenever convenient to do so. As a final remark, when $n=2$ an argument of Bonnet \cite{Bon} combined with the reduction in \cite[Section 12]{Kriv} gives that under the $\e$-flat assumption, $K$ is locally connected, which leads to a greatly simplified proof that every planar $F_0$ minimizer is $\e$-regular.

The following lemma may be proved in the same way as Lemma \ref{lem:blowup} (we omit the details).

\begin{lemma}\label{lem:secblowup}
 Let $(K_j,u_j)$ be a sequence of $\e$-regular $F_0$ minimizers. Then there is a subsequence along which $K_j$ converge to a closed set $K$ in the local Hausdorff sense. Moreover, on each connected component $V_i$ of $\R^n\sm K $, there is a sequence $c_j$ such that $u-c_j$ converges to a measurable function $u$ in $H^1_{\text{loc}} (V_i)$. For each $V_i$, there is a sequence $U_{i j}$ of connected components of $\R^n\sm K_j$ such that $\cL^n(V_i \sm U_{i j})\rightarrow 0$ and $\mu(U_{i j})\rightarrow \mu_i\in [0,\8)$. Finally, $(K,u)$ is an $F_0$ minimizer, satisfies the uniform density estimates, has at most $N$ of the $V_i$ with nonvanishing $\mu_i$, and
 \[
  \int_{B_R}|\n u|^2 d\cL^n = \lim \int_{B_R} |\n u_j|^2 d\cL^n.
 \]
\end{lemma}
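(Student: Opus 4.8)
The plan is to follow the proof of Lemma~\ref{lem:blowup} essentially verbatim, using the hypothesis that each $(K_j,u_j)$ is $\e$-regular wherever that argument invoked Proposition~\ref{prop:fimpsm} for $F$ minimizers (and, correspondingly, an $F_0$-version of Lemma~\ref{lem:semicontinuity}); in fact the argument is a little shorter here since no rescaling is involved. First I would extract $K$. Each $\e$-regular $F_0$ minimizer inherits the density estimates \eqref{eq:density}, so the measures $\cH^{n-1}\mres K_j$ are locally equibounded; passing to a subsequence, $\cH^{n-1}\mres K_j\rightharpoonup\mu$ weakly-$*$ with $\mu$ satisfying $cr^{n-1}\leq\mu(B_r(x))\leq Cr^{n-1}$ on $\supp\mu$, which as in Section~2 forces $K_j\rightarrow K:=\supp\mu$ locally in the Hausdorff topology, with $K$ satisfying the same estimates. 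The $K_j$ are moreover uniformly rectifiable with uniform constants (in the applications they are Hausdorff limits of jump sets of $F$ minimizers, so this is Proposition~\ref{prop:rectlimits} exactly as in the proof of Lemma~\ref{lem:blowup}; more generally it follows from $\e$-regularity together with the David-type estimates of Remark~\ref{rem:unifrect}), so by Proposition~\ref{prop:rectlimits} the limit $K$ is uniformly rectifiable as well. In particular $\cH^{n-1}(K\sm J)=0$, where $J$ is the set of points of $K$ admitting a unique approximate tangent plane.

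Next, for the functions: a comparison identical to the one in Lemma~\ref{lem:blowup}(6) (adjoin $\p B_R$ to $K_j$, relabel $B_R$ into $U_0$, and replace $u_j$ inside by a constant) gives $\int_{B_R\sm K_j}|\n u_j|^2\,d\cL^n\leq C(n)R^{n-1}$ uniformly in $j$, where one uses that the weights of the $(K_j,u_j)$ are uniformly bounded --- automatic in the applications, where these minimizers are obtained as iterated blow-ups and the weights are scale-invariant. Fixing a component $V_i$ of $\R^n\sm K$ and exhausting it by smooth connected $V_i'\cc V_i$, Hausdorff convergence places each $V_i'$ in a single component $U_{ij}$ of $\R^n\sm K_j$ for $j$ large, on which $u_j$ is harmonic; subtracting $c_j^{(i)}=\fint_{V_i'}u_j$ and combining the energy bound with the Poincar\'e--Sobolev inequality and a diagonal argument over the exhaustion, $u_j-c_j^{(i)}\rightarrow u$ in $H^1_{loc}(V_i)$ along a further subsequence, with $u$ harmonic and $\int_{B_R\sm K}|\n u|^2\leq C(n)R^{n-1}$ by monotone convergence. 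Consistency of the $U_{ij}$ across the nested exhaustion, together with $\cL^n(K)=0$ and the fact that $K$ (satisfying the density estimates) separates the distinct components of its complement, yields $\cL^n(V_i\sm U_{ij})\rightarrow 0$; a further subsequence makes $\mu(U_{ij})\rightarrow\mu_i\in[0,\8)$, finite by the weight bound. That at most $N$ of the $\mu_i$ are positive follows by the volume-density argument of Corollary~\ref{cor:volden}, exactly as in the proof of Lemma~\ref{lem:blowup}(3); the components with $\mu_i=0$ are absorbed into $U_0$.

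The substantive step is the minimality of $(K,u)$, for which I would reproduce the gluing argument of Lemma~\ref{lem:blowup}(5). Given a competitor $(L,v)$ agreeing with $(K,u)$ outside $B_R$, fix $R<R_1<R_2$ and a cutoff $\eta\equiv1$ on $B_{R_1}$, $\eta\equiv0$ off $B_{R_2}$, and let $W_\e=\{x\in B_{R_2}\sm B_{R_1}:d(x,K)<\e\}$; the density estimates and rectifiability of $K$ give $\cL^n(W_\e)\leq C\e\,\cH^{n-1}(K\cap\{R_1-\e<|x|<R_2+\e\})$ and, along a good sequence $\e\rightarrow0$, $\cH^{n-1}(\p W_\e)\leq C\,\cH^{n-1}(K\cap\{R_1-\e<|x|<R_2+\e\})$ by the coarea formula. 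I then build a competitor $(L_j,v_j)$ for $(K_j,u_j)$ that equals $(L,v)$ inside $B_{R_1}$ --- with the components $V_\ell$ of $\R^n\sm L$ relabelled through the correspondence $V_\ell\leftrightarrow U_{\ell j}$, and $v$ replaced by $u_j-c_j^{(\ell)}$ together with a truncation --- equals $(K_j,u_j)$ outside $B_{R_2}$, interpolates via $\eta$ in between, and collapses both the set and the function inside the collar $W_\e$, which absorbs the $\cH^{n-1}$-small mismatch between $K_j$ and $K$ there. Plugging $(L_j,v_j)$ into the minimality inequality for $(K_j,u_j)$ and taking $\liminf_j$, the cross terms vanish because $u_j-c_j^{(\ell)}\rightarrow v$ locally uniformly away from $L$, the surface contribution on $B_{R_1}$ converges to $\sum_\ell\mu_\ell\int_{\p V_\ell\cap B_{R_1}}2\theta_\ell\,d\cH^{n-1}$ using $\mu(U_{\ell j})\rightarrow\mu_\ell$ and the control on the truncation, and the surface energy of $(K_j,u_j)$ on $B_{R_1}$ is lower semicontinuous along the sequence by the improved semicontinuity lemma. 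This produces $F_0(K,u;B_R)\leq F_0(L,v;B_R)+C(\a(R_2+\e)-\a(R_1-\e))$ for a nondecreasing function $\a$; at continuity points $R$ of $\a$ (that is, a.e.\ $R$) one sends $\e\rightarrow0$ and $R_1,R_2\rightarrow R$, and the remaining radii are handled by the monotonicity argument of Lemma~\ref{lem:blowup}. Taking $(L,v)=(K,u)$ in the same inequality forces $\int_{B_R}|\n u|^2\,d\cL^n=\lim_j\int_{B_R}|\n u_j|^2\,d\cL^n$.

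The main obstacle, exactly as in Lemma~\ref{lem:blowup}, is the surface term. On one side I need the improved lower semicontinuity of the surface energy on $B_{R_1}$ --- an integral over all of $K$, not merely over $K_{u_0}$ --- which is the $F_0$-analog of Lemma~\ref{lem:semicontinuity}; its proof covers $\cH^{n-1}$-a.e.\ point of $K$ by balls on which, by $\e$-regularity, each $K_j$ is a union of two $C^{1,\a}$ graphs converging in $C^1$ to the corresponding graphs of $K$, and then patches via the Vitali--Besicovitch covering theorem. On the other side I must choose $L_j$ so that it is genuinely admissible for $F_0$ --- distinct $U_{\ell j}$ remaining in distinct components of $\R^n\sm L_j$ --- which is where the collar $W_\e$ and the relabelling $V_\ell\leftrightarrow U_{\ell j}$ have to be arranged compatibly. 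The remainder is routine bookkeeping, which is presumably why the authors omit the details.
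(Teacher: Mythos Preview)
Your proposal is correct and follows exactly the approach the paper takes: the paper's entire proof is the sentence ``may be proved in the same way as Lemma~\ref{lem:blowup} (we omit the details),'' and you have accurately reconstructed those details, including the point that $\e$-regularity replaces the appeal to Proposition~\ref{prop:fimpsm} in the $F_0$-analog of Lemma~\ref{lem:semicontinuity}. Your remark that uniform boundedness of the weights $\mu(U_{ij})$ is an implicit hypothesis (automatic in the iterated-blow-up applications) is a fair observation the paper leaves tacit.
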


\section[Flatness Criteria in n Dimensions]{Flatness Criteria in $n$ Dimensions}

As in the planar case, we begin by considering $F_0$ minimizers with $N\geq 2$. The analysis is complicated by the possible singularities minimal surfaces exhibit in high dimensions, and relies on the strict maximum principle of Leon Simon \cite{Simon}. We will be able to show that if $N\geq 2$, then $N=2$, $u$ is locally constant, and each of $U_1, U_2$ is a set minimizing finite perimeter. This is not a flatness criterion (consider $K$ a singular minimal cone), but can be used to estimate the dimension of the set of singular points.

\begin{lemma}\label{lem:touching}Let $(K,u)$ be an $\e$-regular $F_0$ minimizer. Assume that $B_1 \sm K$ has at least two connected components $V_1,V_2$ not contained in $U_0$ and that $0\in \p V_1 \cap \p V_2$. Then $N=2$, $U_0=\emptyset$, $u$ is locally constant, and $U_1,U_2$ are complementary perimeter-minimizing sets.
\end{lemma}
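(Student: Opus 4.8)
The plan is to exploit the two lemmas just proved (Lemma \ref{lem:EL} and Lemma \ref{lem:setmin}) together with Simon's strict maximum principle. First I would observe that since $0 \in \p V_1 \cap \p V_2$ and both $V_i$ are not contained in $U_0$, Lemma \ref{lem:setmin} tells us that $V_1$ is an inward perimeter minimizer in $B_1$, and likewise $V_2$; moreover $\p V_1$ and $\p V_2$ share the point $0$. The key topological point is that $V_1$ and $V_2$ are \emph{disjoint} open sets (distinct components of $B_1 \sm K$), so $\p V_1$ and $\p V_2$ are minimal-type hypersurfaces that touch at $0$ from opposite sides: locally $V_1$ lies on one side of a common tangent region and $V_2$ on the other. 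Using that $K$ satisfies the uniform density estimates and is $\cH^{n-1}$-rectifiable, and the flat-implies-smooth machinery (Proposition \ref{prop:fimpsm}) is available since $(K,u)$ is $\e$-regular, I would first pass to a blow-up of $(K,u)$ at $0$ using Lemma \ref{lem:secblowup}. The blow-up is again an $\e$-regular $F_0$ minimizer, the components $V_1, V_2$ blow up to cones $\hat V_1, \hat V_2$ with $0$ on the boundary of each, and the Dirichlet energy converges; the scaling $|\n u|^2 \sim r^{-1}$ (degree-$\frac12$ homogeneity) means the Dirichlet energy term disappears in the blow-up, so $u$ becomes locally constant on the blow-up and $\hat V_1, \hat V_2$ become genuine (two-sided) perimeter-minimizing cones.

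Next I would apply Simon's strict maximum principle \cite{Simon}: if two area-minimizing hypersurfaces (here $\p \hat V_1$ and $\p \hat V_2$, both perimeter minimizers, lying on opposite sides) touch at a point, they must coincide in a neighborhood of that point. This forces $\p \hat V_1 = \p \hat V_2$ near $0$, hence $\hat V_1$ and $\hat V_2$ are complementary: $\hat V_1 = \R^n \sm \overline{\hat V_2}$ up to measure zero, and there is no room for any third component or for $U_0$ — so on the blow-up $N = 2$ and $U_0 = \emptyset$. Then I would argue this descends back to $(K,u)$ itself: the number of components $V_i$ with nonvanishing $\mu_i$ is semicontinuous under blow-up in the sense of Lemma \ref{lem:secblowup} (each $V_i$ is approximated by components $U_{ij}$ of the rescaled minimizers with volume density bounded below by Corollary \ref{cor:volden}-type estimates), so if the blow-up has exactly two such components the original does too; similarly $K = \p V_1 \cup \p V_2$ with $V_1, V_2$ complementary near $0$, and since this holds at every point of $\p V_1 \cap \p V_2$ and $K$ is connected-enough via the density estimates, we get $U_0 = \emptyset$ globally and $K = \p V_1 = \p V_2$.

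Finally, with $U_0 = \emptyset$ and $N = 2$ established, I would return to the Euler--Lagrange relations. From Lemma \ref{lem:EL}, at smooth points of $K$ we have $-(\mu_1 + \mu_2) H = |\n^K u|_{V_1}|^2 - |\n^K u|_{V_2}|^2$ and the one-sided inequalities $-\mu_1 H \geq |\n^K u|_{V_1}|^2 \geq 0$ and $-\mu_2 H_{V_2} = \mu_2 H \geq |\n^K u|_{V_2}|^2 \geq 0$ (using $H_{V_2} = -H_{V_1}$). Adding the two one-sided inequalities gives $0 \geq |\n^K u|_{V_1}|^2 + |\n^K u|_{V_2}|^2 \geq 0$, so both tangential gradients vanish identically on $K$, and then the curvature identity forces $H = 0$: $K$ is a minimal surface and $u$ has zero tangential and (by the Neumann condition) zero normal derivative on $K$, so $u$ is locally constant on each of $V_1, V_2$. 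Once $u$ is locally constant, Lemma \ref{lem:setmin} upgrades inward minimality to full perimeter minimality of each $V_i$, and since $V_1, V_2$ are complementary, they are complementary perimeter-minimizing sets, as claimed.

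I expect the main obstacle to be the passage from the blow-up statement back to the statement for $(K,u)$ itself — in particular, justifying rigorously that the strict maximum principle conclusion at the blow-up level (no third component, $U_0$ empty near $0$) propagates to a global statement, and carefully handling the case where $K$ might a priori have extra components of its complement that are "hidden" away from $0$. This requires combining the density estimates, the connectedness of $K$ (which itself needs the $\e$-regularity and the local-graph structure from Proposition \ref{prop:fimpsm}), and a covering/continuation argument. The Euler--Lagrange bookkeeping in the last paragraph is routine given the lemmas, and Simon's theorem is applied essentially as a black box, so the topological propagation step is where the real work lies.
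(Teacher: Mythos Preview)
Your approach has a genuine gap at the very first step. You assert that in the blow-up ``the Dirichlet energy term disappears,'' but this is false: $F_0$ is scale-invariant, in the sense that both the Dirichlet term and the perimeter term scale like $r^{n-1}$ under $(K,u)\mapsto (K/r,\,r^{-1/2}u(r\,\cdot))$. Lemma~\ref{lem:secblowup} confirms this explicitly: the limit satisfies $\int_{B_R}|\n u_\infty|^2 = \lim \int_{B_R}|\n u_j|^2$. So the blow-up is just another $F_0$ minimizer with the same structure, and you have gained nothing. The degree-$\tfrac12$ scaling you invoke is what makes $F_0$ appear as the blow-up of $F$, not what makes the Dirichlet energy vanish under further blow-up of $F_0$.

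There is a second, related gap: even granting your blow-up picture, Simon's strict maximum principle applies to genuine area-minimizing (or stationary) hypersurfaces, not to boundaries of one-sided (inward) perimeter minimizers. Lemma~\ref{lem:setmin} only gives that each $V_i$ is an \emph{inward} minimizer, so $\partial V_i$ is a priori only a supersolution. The paper deals with exactly this point by introducing auxiliary sets $Q_1,Q_2$ which are honest perimeter minimizers in $B_\rho$ with $Q_i=V_i$ outside $B_\rho$; one checks $V_i\subset Q_i$ using Lemma~\ref{lem:setmin}, arranges $Q_1\cap Q_2=\emptyset$, and then applies Simon's theorem to $\partial Q_1,\partial Q_2$ (which are now legitimate minimal hypersurfaces) to get $\partial Q_1=\partial Q_2$ in $B_\rho$. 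A further argument rules out any leftover piece of $\partial B_\rho$ not covered by $\bar V_1\cup\bar V_2$, giving $U_0=\emptyset$ directly---no blow-up and no descent step are needed. Your final Euler--Lagrange computation (adding the two one-sided inequalities to force $H=0$ and vanishing tangential gradients) is correct and is in fact a slightly cleaner variant of the paper's endgame, but it only becomes available once you know $V_1,V_2$ are complementary near a smooth point, which is precisely what the $Q_i$ construction delivers.
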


\begin{proof}
 Fix $\r<1$, and (unless $u$ is constant on $V_i$) solve the following minimization problems over sets of locally finite perimeter $Q$:
 \[
  P(Q_1;B_1)=\min\{ P(Q; B_1)| Q = V_1 \text{ outside }  B_\r\}
 \]
and
\[
 P(Q_2;B_1)=\min\{ P(Q; B_1)| Q = V_2 \text{ outside }  B_\r\}.
\]
That the minima are attained follows from the compactness theorem for sets of finite perimeter (see \cite{Giusti}). If $u$ is constant on one of the $V_i$, set $Q_i=V_i$, and note that by Lemma \ref{lem:setmin} this set attains the minimum in the above problem. We identify $Q_i$ with the set $Q_i^{(1)}$ of points of Lebesgue density $1$ for $Q_i$, which is open.

We have that $\cL^n(V_i\sm Q_i)=0$. This is by construction if $u$ is constant, while if $u$ is nonconstant we may use $V_i\sm Q_i$ as the set $E$ in Lemma \ref{lem:setmin} to obtain
\[
 P(V_i;B_1)\leq P(Q_i;B_1) - c \int_{V_i\sm Q_i}|\n u|^2 d\cL^n.
\]
As $u$ is nonconstant, the integral on the right is strictly positive, and this strict inequality contradicts the minimality of $Q_i$. A similar argument gives that $\cL^n(Q_1\sm V_2)=0$ and vice versa.

We also have that the $Q_i$ are connected. By possibly replacing $Q_1$ by $Q_1 \sm \bar{Q}_2$ and $Q_2$ by $Q_2\sm \bar{Q}_1$ (this does not increase their perimeter, hence they still attain the minima above), we may ensure that $Q_1\cap Q_2 =\emptyset$. On the other hand, $0\in \p V_1 \cap \p V_2$, and so $0\in \p Q_1 \cap \p Q_2$. Applying the maximum principle of \cite{Simon} gives that $B_\r \cap \p Q_1= B_\r \cap \p Q_2$.

We claim that this implies that $S=\p B_\r \sm (\bar{V}_1\cup\bar{V}_2)$ is empty. Indeed, take $z\in S$ and a ball with $B_\s (z)\cap \p B_\r \ss S$  Possibly choosing a smaller value of $\s$, we may arrange to have $B_\s (y)\cap K = \emptyset$. Now note that $B_\r$ itself is an outward perimeter minimizer, and hence $Q_i$ is a local perimeter minimizer in $B_\s(z)$: for any set of finite perimeter $E$ with $Q_i \triangle E \cc B_\s(z)$,
\[
 P(B_\r \cap E ; B_\s(z))\leq P(E;B_\s(z)),
\]
while
\[
 P(Q_i;B_\s(z))\leq P(B_\r \cap E ; B_\s(z))
\]
by construction (as $B_s(z)\cap V_i =\emptyset$). However, $\p B_\r \cap  B_\s(z) \ss \p Q_1 \cup \p Q_2$ from above. If in fact $\p B_\r \cap B_\s(z) \ss \p Q_i$ for one of the $i$, we immediately obtain a contradiction with the fact that $\p Q_i$ coincides with $\p B_\r$ at a regular point, and hence has strictly positive curvature. On the other hand, if $\p Q_1 \cap \p Q_2$ intersect in $B_\s (z)$, we may again apply the strict maximum principle to obtain that $\bar{Q}_1 \cup \bar{Q}_2$ exhaust $B_\s(z)$, which contradicts that they are both contained in $B_\r$.

By repeating this argument for each value of $\r$, we see that $V_1\cup V_2 \cup K$ partitions $B_1$, Applying Lemma \ref{lem:setmin}, each $V_i$ is a perimeter minimizer (being an inward minimizer and the complement of an outward minimizer). Now take a regular point $x\in \p V_1$ and apply Lemma \ref{lem:EL} on a neighborhood of this point (where, by the regularity of minimal surfaces, see \cite{Giusti}, $\p V_1$ is given by an analytic graph); as the mean curvature of $\p V_1$ is zero, this implies that $\n u$ vanishes along $\p V_1$ on this neighborhood. By the unique continuation property of harmonic functions, this implies $U$ is constant on $V_1$. Similarly, $u$ is constant on $V_2$. Finally, this argument may be applied to larger and larger balls to obtain the conclusion of the lemma (noting that $V_1$ and $V_2$ may not be contained in the same connected component of $\R^n\sm K$, as then $(K\sm B_1,u)$ would be an admissible competitor which strictly reduces $F_0$).
\end{proof}

To simplify notation, we say that a point $x\in k$ is a \emph{smooth point} if the conclusion of Proposition \ref{prop:fimpsm} applies to some ball $B_r(x)$.

\begin{theorem}\label{thm:ndimsm} Let $(K,u)$ be a local $F$ minimizer on $\W$.  Set
\[
\Sigma := \{x\in \W\cap K: \limsup_{r\searrow}\frac{1}{r^{n-1}}\int_{B_r(x)}|\n u|^2 d\cL^n =0\}.
\]
Then $\Sigma$ contains any point at which one of the following holds:
\begin{enumerate}
 \item $\liminf_{r\searrow 0} \frac{\cL^n (\{u=0\}\cap B_r(x))}{r^n} =0$
 \item  for some $r$, $B_r(x)\sm K$ has two connected components, each of which has with $x$ in the boundary and $u>0$ on its interior
\end{enumerate}
Furthermore, there is a relatively closed subset $\Sigma^*\ss \Sigma$ with $\cH^\t ( \Sigma^*)=0$ for each $\t>n-8$ such that $\Sigma\sm \Sigma^*$ contains only smooth points.
\end{theorem}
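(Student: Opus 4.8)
\emph{Overview of the plan.} The idea is to reduce everything to blow-ups, which by Lemma~\ref{lem:blowup} are $\e$-regular $F_0$ minimizers, and then to combine the rigidity of Section~7 with classical minimal surface theory. The key bookkeeping fact is that, by item~(6) of Lemma~\ref{lem:blowup}, a point $x\in K$ lies in $\Sigma$ exactly when every blow-up limit $(K_0,u_0)$ at $x$ has vanishing Dirichlet energy $\int_{B_1\sm K_0}|\n u_0|^2=0$ on $B_1$, i.e.\ $u_0$ is locally constant near $0$. I will also use the monotonicity of the rescaled energy $r\mapsto r^{1-n}\int_{B_r(x)}|\n u|^2$ underlying the degree-$\frac12$ homogeneity of blow-ups, so that its limit as $r\searrow0$ exists and can be evaluated along any convenient sequence of scales, and the fact (from the remarks after Lemma~\ref{lem:setmin}) that Proposition~\ref{prop:fimpsm} and Lemma~\ref{lem:touching} apply to these blow-ups.

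\emph{Step 1: conditions (1) and (2) force $x\in\Sigma$.} For a point satisfying (2) I would start from the two components $V_1,V_2$ of $B_r(x)\sm K$ touching $x$, on each of which $u\ge s$; Corollary~\ref{cor:volden} gives $\cL^n(V_j\cap B_\rho(x))\ge c\rho^n$ for $\rho\le r$. Rescaling by any $s_k\searrow0$ and using the Hausdorff convergence $K_{u_k}\to K_0$, the $V_j$ pass to disjoint open sets $V_j^0$, neither contained in $U_0$, with $0$ in their closures and separated near $0$ by $K_0$ (any path in $B_\rho\sm K_0$ would, for large $k$, join $V_1$ to $V_2$ inside $B_{\rho s_k}(x)\sm K$). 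Lemma~\ref{lem:touching} then gives $u_0$ locally constant; since this holds for \emph{every} blow-up, $x\in\Sigma$. For a point satisfying (1) I would instead choose $s_k$ realizing the liminf, so that the blow-up has $\cL^n(U_0\cap B_1)=0$, hence $U_0\cap B_1=\emptyset$ as $U_0$ is open. If two components not in $U_0$ touch $0$, Lemma~\ref{lem:touching} again applies. Otherwise a single component $U_1\supseteq B_1\sm K_0$ sits on both sides of $K_0$; at a smooth piece of $K_0$ (such pieces have full $\cH^{n-1}$-measure by $\e$-regularity and Proposition~\ref{prop:fimpsm}, and are $C^\infty$ by bootstrapping \eqref{eq:ELc2}), the one-sided inequality of Lemma~\ref{lem:EL} applied to both local sides (both of weight $\mu_1$, neither in $U_0$), together with $H_{V_1}=-H_{V_2}$, forces $H\equiv0$ and $\n^K u_0=0$; with the Neumann condition this makes the Cauchy data of $u_0$ there constant, so by unique continuation $u_0$ is constant on $U_1$. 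Thus the blow-up along $s_k$ has $\int_{B_R\sm K_0}|\n u_0|^2=0$, and by item~(6) together with the energy monotonicity, $x\in\Sigma$.

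\emph{Step 2: structure at $\Sigma$-points and the small exceptional set.} Fix $x\in\Sigma$, so every blow-up has $u_0$ locally constant near $0$. First I would observe that $K_0$ cannot contain near $0$ a piece having the same non-$U_0$ component (on which $u_0$ is globally constant, by unique continuation) on both sides, for deleting that piece gives an admissible competitor with strictly smaller $F_0$. Hence near $0$ either $K_0=\p U_1=\p U_2$ with $U_1,U_2$ complementary perimeter minimizers (via Lemma~\ref{lem:touching}), or $K_0=\p U_1$ with $U_1$ a perimeter minimizer and $U_0$ across it; in both cases $K_0$ is locally an area-minimizing hypersurface, hence (see \cite{Giusti}) smooth outside a relatively closed set of Hausdorff dimension $\le n-8$ (empty if $n\le7$), with hyperplane tangent cones at regular points. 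Define $\Sigma^*\ss\Sigma$ to consist of the points at which no blow-up of $(K,u)$ is a hyperplane. At any $x\in\Sigma\sm\Sigma^*$, flatness of some blow-up together with the Hausdorff convergence $\frac{K_{u_k}-x_k}{s_k}\to K_0$ triggers Proposition~\ref{prop:fimpsm}, so $x$ is a smooth point. Compactness of blow-ups (Lemma~\ref{lem:secblowup}) and upper semicontinuity of the ``flat scale'' make $\Sigma^*$ relatively closed in $\Sigma$, and $\cH^\t(\Sigma^*)=0$ for $\t>n-8$ follows by Federer dimension reduction: a point of positive $\cH^\t$-density in $\Sigma^*$ would blow up to an area-minimizing cone $K_0$ with $0$ a singular point and $\cH^\t(\mathrm{Sing}\,K_0)>0$, contradicting $\dim\mathrm{Sing}\,K_0\le n-8$.

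\emph{Main obstacle.} I expect the crux to be Step~1 for condition (1), namely excluding ``crack-tip'' blow-ups --- a half-dimensional $K_0$ that does not disconnect $\R^n$, with $u_0$ genuinely multivalued and homogeneous of degree $\frac12$ --- which for the Mumford--Shah functional are genuine minimizers. What rules them out here is precisely that the surface energy is counted with multiplicity two, which is the source of the one-sided curvature inequality in Lemma~\ref{lem:EL}; turning this into a proof requires the $\e$-regularity and elliptic bootstrap needed to write \eqref{eq:ELc2} and to run the unique-continuation step at a smooth point of $K_0$, and the energy monotonicity to upgrade the liminf-sequence to all scales. The other nontrivial point is carrying out the Federer dimension reduction carefully across the three layers --- $F$ minimizers, $F_0$ minimizers with constant $u_0$, and area-minimizing hypersurfaces --- so that the sharp codimension-$8$ estimate is correctly inherited.
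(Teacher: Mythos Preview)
Your overall architecture---blow up, use the rigidity lemmas of Section~7 to force the limit to have locally constant $u_\infty$, then run Federer dimension reduction through the minimal-surface structure---is exactly the paper's strategy, and your Step~2 is essentially identical to the paper's argument.

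There is, however, a real gap in Step~1 for condition~(1). You repeatedly invoke ``the monotonicity of the rescaled energy $r\mapsto r^{1-n}\int_{B_r(x)}|\nabla u|^2$,'' first in the overview and then to pass from the single liminf-realizing sequence $s_k$ to the full $\limsup$. No such monotonicity formula is proved (or even stated) anywhere in the paper; Lemma~\ref{lem:blowup} produces blow-up limits by pure compactness, not by a Weiss- or Bonnet-type monotonicity. Without it, your argument only shows the rescaled energy tends to zero along the one sequence you chose, which does not give $x\in\Sigma$. The paper avoids this by starting from an \emph{arbitrary} sequence $\sigma_k\to0$, extracting a convergent subsequence, and arguing the limit has $u_\infty$ locally constant; the subsequence-of-subsequence trick then yields the full limit. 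You should restructure (1) along those lines rather than appeal to an unproved monotonicity.

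A second, smaller point: your route to ``$u_\infty$ locally constant'' in case~(1) via the one-sided curvature inequality of Lemma~\ref{lem:EL} is correct but unnecessarily elaborate. Once $U_0\cap B_1=\emptyset$, pick any smooth point $y\in K_\infty$ (these have full $\cH^{n-1}$ measure by $\e$-regularity); there $K_\infty$ locally disconnects $B_\rho(y)$ into two pieces, neither in $U_0$, so Lemma~\ref{lem:touching} applies directly and gives $N=2$, $u_\infty$ locally constant, and the perimeter-minimizing structure in one stroke. This is what the paper does, and it spares you the bootstrap to $C^2$ regularity and the unique-continuation step. Your ``crack-tip'' worry is thus already absorbed into Lemma~\ref{lem:touching} and need not be treated separately.
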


\begin{proof} 
 Let $x\in \Sigma$ and set $E$ to be the set of nonsmooth points of $K\cap \W$. We show that the $\cH^\t$ measure of $E \cap \Sigma$ is zero for each $\t>n-8$.

Indeed, this is a consequence of the classical dimension reduction argument of Federer; we briefly summarize it here, referring to \cite{Giusti} for most of the proofs. Assume that this is not the case, so in other words $\cH^\t(E \cap \Sigma)>0$.  By \cite[Proposition 11.3]{Giusti}, we have that
\[
 \limsup \frac{\cH^{\t}_\8(B_\s (y)\cap E }{\s^\t}> c(n,\t)>0
\]
for $\cH^\t-$a.e. $y\in E\cap \Sigma$. Fix a $y$ for which the above holds, and a subsequence with  
\[
 \lim_k \frac{\cH^{\t}_\8(B_{\s_k} (y)\cap E }{\s_k^\t}> c(n,\t)>0.
\]
Passing to a further subsequence, we may take 
\[ 
\1\frac{K-y}{s_k},\frac{u(s_k(\cdot -y))-c_k}{\sqrt{s_k}} \2 \rightarrow (K_\8,u_\8)
\]
in the sense of Lemma \ref{lem:blowup}. As $y\in \Sigma$< $u_\8$ is locally constant, and hence by Lemma \ref{lem:setmin} $K_\8$ is a finite union of minimal hypersurfaces. By (\cite[Lemma 11.5]{Giusti}, which applies to Hausdorff convergence of $\e$-regular sets), we have that
\[
 \cH^\t_\8 {E_\8 \cap B_1}\geq c(n,\t)>0,
\]
where $E_\8$ is the singular set of $K_\8$. It is a classical result of minimal surface theory, however \cite[Theorem 11.8]{Giusti}, that $\cH^\t_\8 (E_\8)=0$, which is a contradiction.

Now we show that criteria (1) and (2) ensure that a point $x$ lies in $\Sigma$. For (1), take any sequence $\s_k\rightarrow 0$ and extract a subsequence with
\[ 
\1\frac{K-x}{s_k},\frac{u(s_k(\cdot -x))-c_k}{\sqrt{s_k}} \2 \rightarrow (K_\8,u_\8)
\]
in the sense of Lemma \ref{lem:blowup}. By (1), we have in addition that the limiting zero component $U_0$ associated to $(K_\8,u_\8)$ is empty, and $0\in K$. Thus $K$ has nontrivial $\cH^{n-1}$ measure, and hence there is a $y\in K_\8$ which is a smooth point. Apply Lemma \ref{lem:touching} in a small neighborhood of $y$, noting that as $U_0$ is empty and by the $\e$-regularity property $K$ locally disconnects $B_r(y)$, the hypotheses are satisfied. It follows that $u_\8$ is locally constant, and applying part (6) of Lemma \ref{lem:blowup}, we have
\[
 \lim_k \frac{1}{\s_k^{n-1}}\int_{B_{\s_k}(x)}|\n u|^2 d\cL^n = 0.
\]
After applying argument along each subsequence, we deduce that $x \in \Sigma$.

If (2) holds, proceed in the same manner, but now note that by the positive Lebesgue density of each connected component near $x$, the limit will have at least two components $U_1$ and $U_2$ which have $0$ in their boundary. Thus Lemma \ref{lem:touching} may be applied to $B_1(0)$.
\end{proof}

Due to the smoothness of minimal surfaces in dimensions below 8, we have the following stronger result, which establishes an energy gap between regular and singular points.

\begin{theorem}\label{thm:energygap} Let $(K,u)$ be an $\e$-regular $F_0$ minimizer with $0\in K$ and $n\leq 7$. Then either (a) $K$ is a hyperplane or a union of two parallel hyperplanes, or (b),
\[
 \liminf_{R\rightarrow \8} \frac{1}{R^{n-1}}\int_{B_R}|\n u|^2 d\cL^n \geq \z>0.
\]
Let $u$ be a local $F$ minimizer on $\W$ with $0\in K$ and $n\leq 7$. Then either $0$ is a smooth point or
\[
 \liminf_{r\rightarrow 0} \frac{1}{r^{n-1}}\int_{B_r}|\n u|^2 d\cL^n \geq \zeta>0.
\]
The constant $\z$ depends only on $n$ and the $\e$-regularity constants.
Let $(K,u)$ be an $\e$-regular $F_0$ minimizer with $0\in K$ and $n\leq 7$, and assume $N\geq 2$. Then $N=2$, $u$ is locally constant, and $K$ is of type (a).
\end{theorem}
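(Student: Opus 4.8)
The plan is to prove the three assertions in the order stated, with the engine being Lemma~\ref{lem:touching}, the blow-down construction of Lemma~\ref{lem:secblowup}, and the classical regularity theory for area-minimizing hypersurfaces in low dimensions --- in particular Simons' theorem that there are no singular area-minimizing cones in $\R^n$ for $n\le 7$, together with the monotonicity formula for minimal surfaces (see \cite{Giusti}). \emph{For the first assertion}, I would suppose (b) fails, i.e. $\liminf_{R\to\8}\frac{1}{R^{n-1}}\int_{B_R}|\n u|^2\,d\cL^n=0$, pick $R_j\to\8$ realizing the liminf, and pass to the blow-down $(K/R_j,\ u(R_j\,\cdot)/\sqrt{R_j})$: since $F_0$ scales homogeneously, Lemma~\ref{lem:secblowup} yields an $\e$-regular $F_0$ minimizer $(K_\8,u_\8)$ with $0\in K_\8$, and the energy convergence in that lemma together with the choice of $R_j$ gives $\n u_\8\equiv 0$, so $u_\8$ is locally constant. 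The active components persist in the limit (they are unbounded with uniformly positive volume density by Corollary~\ref{cor:volden} and carry fixed positive weights), so at least one active component $V$ survives; by Lemma~\ref{lem:setmin} $V$ is a perimeter-minimizing set, hence $\p V$ is an area-minimizing boundary in $\R^n$, so it is smooth, its tangent cone at infinity is a smooth area-minimizing cone and therefore a hyperplane, and by the monotonicity formula $\p V$ is itself a hyperplane. Doing this for each active component: either two of these hyperplanes meet, in which case Lemma~\ref{lem:touching} applies at the common point and forces $K_\8$ to be a single hyperplane, or they are all parallel, and in the parallel case at most one of the slabs they bound can be ``free'', since a thin active slab could be pinched off inside a large ball to strictly lower $F_0$ (the surface saving $\sim 2\mu R^{n-1}$ beats the junction and Dirichlet cost). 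Thus $K_\8$ is of type (a); finally $\e$-regularity makes $K$ asymptotically a union of $C^{1,\a}$ graphs with vanishing normalized norm over the limit plane(s), and the Euler--Lagrange identity \eqref{eq:ELc2} --- with zero mean curvature on the limit plane and vanishing tangential derivative from the $u_\8\equiv\text{const}$ side --- forces $\n^K u=0$ along $K$, so unique continuation gives $u$ locally constant and $K$ exactly of type (a).

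\emph{For the second assertion}, suppose $0$ is not a smooth point. Blow up at $0$ (Lemma~\ref{lem:blowup}) along scales realizing $\liminf_{r\to0}\frac{1}{r^{n-1}}\int_{B_r}|\n u|^2$; by the remarks after Lemma~\ref{lem:setmin} the limit $(K_0,u_0)$ is an $\e$-regular $F_0$ minimizer with $0\in K_0$, and by part (6) of Lemma~\ref{lem:blowup} its Dirichlet density at $0$ equals that liminf. If the liminf were $0$, then $u_0$ would be locally constant, so by the first assertion $K_0$ would be of type (a), hence $K$ would be $\e$-flat at small scales near $0$ and Proposition~\ref{prop:fimpsm} would make $0$ smooth --- a contradiction; so the liminf is positive. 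The uniformity of $\z$ is a compactness argument: if it failed there would be local $F$ minimizers $(K_j,u_j)$ with $0\in K_j$ non-smooth and $\liminf_{r\to0}\frac{1}{r^{n-1}}\int_{B_r}|\n u_j|^2\to 0$; blowing up each $(K_j,u_j)$ at a scale $r_j$ with $\frac{1}{r_j^{n-1}}\int_{B_{r_j}}|\n u_j|^2\to 0$ produces, as above, a flat limit, and Proposition~\ref{prop:fimpsm} (with its universal $r_0$) would then make $0$ smooth for $j$ large, a contradiction.

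\emph{For the third assertion}, let $N\ge 2$. If some two active components share a boundary point, Lemma~\ref{lem:touching} gives $N=2$, $U_0=\emptyset$, $u$ locally constant, and $U_1,U_2$ complementary perimeter minimizers, so $K=\p U_1$ is area-minimizing in $\R^n$ and, by $n\le7$ and monotonicity, a hyperplane --- type (a). Otherwise: if $K$ is already of type (a) we are done (then $N=2$, since a thin active middle slab in the two-plane case could be pinched off, and $u$ is constant on the active half-spaces by \eqref{eq:ELc2} and unique continuation); and if $K$ is not of type (a), then by the first assertion the energy at infinity is $\ge\z>0$, so every blow-down $(K_\8,u_\8)$ has $\int_{B_1}|\n u_\8|^2\ge\z>0$ with $u_\8$ non-constant, yet $(K_\8,u_\8)$ is a homogeneous (degree $\tfrac12$) $F_0$ minimizer --- by the monotonicity formula underlying the blow-up analysis, cf. the degree-$\tfrac12$ homogeneity of blow-ups noted in the introduction --- with at least two active components, each a cone through the origin, so $0\in\p V_1\cap\p V_2$ and Lemma~\ref{lem:touching} forces $u_\8$ locally constant, contradicting $\int_{B_1}|\n u_\8|^2>0$. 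Hence $K$ is of type (a) and the conclusion follows.

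The hardest part is the rigidity transfer from the (blown-up or blown-down) flat limit back to $(K,u)$ itself: one must combine $\e$-regularity with the one-sided and two-sided Euler--Lagrange relations of Lemma~\ref{lem:EL} and a strong-maximum-principle/unique-continuation argument to exclude nonconstant $u$ and non-planar $K$, and --- in the non-touching case --- one genuinely needs homogeneity of the blow-down so that Lemma~\ref{lem:touching} is applicable at the cone vertex; the pinching arguments that remove thin active slabs and the treatment of weight-zero pieces of $K$ between components of $U_0$ are further delicate but essentially routine points.
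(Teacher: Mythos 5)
Your overall strategy—blow down along scales where the normalized Dirichlet energy is small, identify the limit using Lemma~\ref{lem:setmin}, minimal surface classification in $n\le 7$, and Lemma~\ref{lem:touching}, then transfer structure back—is the right circle of ideas, but it is not how the paper organizes the proof, and the step you yourself flag as ``the hardest part'' is where your argument has a genuine gap. The paper first proves Lemma~\ref{lem:encrit}: if an $\e$-regular $F_0$ minimizer satisfies $\int_{B_1}|\n u|^2\le\z$, then $K\cap B_{1/2}$ is a union of two $C^{1,\a}$ graphs, with the $C^{1,\a}$ norms \emph{bounded by~$1$}, lying $\e$-close to a pair of parallel hyperplanes. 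All three assertions of the theorem are then read off from this lemma by rescaling. The decisive feature is that applying Lemma~\ref{lem:encrit} to $K/R_k$ at each of a sequence of scales with small normalized energy gives, upon rescaling back, the quantitative estimate $d(K,\pi_1\cup\pi_2;B_1)\le CR_k^{-\a}$ where $\pi_1$ is the fixed tangent plane to $K$ at $0$; sending $R_k\to\8$ forces $K\cap B_1$ to coincide with a pair of hyperplanes, and repeating on dilates gives type~(a) exactly.

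Your ``rigidity transfer'' replaces this quantitative step with a qualitative one: you know the Hausdorff limit $K_\8$ is flat, and you invoke $\e$-regularity to say $K$ is ``asymptotically a union of $C^{1,\a}$ graphs with vanishing normalized norm,'' then apply the Euler--Lagrange identity \eqref{eq:ELc2} as if $K$ already had zero mean curvature and $u$ had vanishing tangential derivative. Neither of those holds at finite scale: Hausdorff convergence of $K/R_k$ to a plane plus Proposition~\ref{prop:fimpsm} gives only the \emph{uniform} bound $\|g_\pm\|_{C^{1,\a}}\le 1$ on the rescaled graphs, not a bound tending to zero, and \eqref{eq:ELc2} applied to $(K,u)$ at scale $R_k$ relates a generally nonzero mean curvature to a generally nonzero tangential energy imbalance, so unique continuation cannot be launched. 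Closing this gap requires precisely the content of Lemma~\ref{lem:encrit} --- a $C^{1,\a}$ norm bound that, once rescaled, yields a power-law decay of the distance to a fixed pair of planes --- and nothing in your proposal supplies an equivalent decay estimate. A further, smaller, issue: in the third assertion you lean on a purported degree-$\tfrac12$ homogeneity of blow-downs to place $0$ on the boundary of two active cones; the paper establishes no monotonicity formula for $F_0$ and makes no homogeneity claim, arguing instead (qualitatively) that two disjoint unbounded active components, both of which touch $K$ at bounded distance from the origin, are forced together in the rescaled Hausdorff limit, whereupon Lemma~\ref{lem:touching} applies to the blow-down and the energy at infinity collapses.
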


The proof will follow easily from the following lemma, which is useful in its own right.

\begin{lemma}\label{lem:encrit}Let $(K,u)$ be an $\e$-regular $F_0$ minimizer with $0\in K$ and $n\leq 7$. Then there is a universal constant $\z$ (depending only on $n$ and the $\e$-regularity constants) such that if
 \[
  \int_{B_1}|\n u|^2 d\cL^n \leq \z,
 \]
 then $0$ is a smooth point of $K$, and moreover $K\cap B_{1/2}$ is given by a union of two $C^{1,\a}$ graphs lying $\e$-close in $C^{1,\a}$ topology to a pair of parallel lines, one of which passes through $0$.
\end{lemma}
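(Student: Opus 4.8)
The plan is a compactness argument: reduce to a limiting $F_0$ minimizer whose Dirichlet energy vanishes in $B_1$, and show such a limit must be a single hyperplane through $0$. Suppose the statement fails, with a sequence of $\e$-regular $F_0$ minimizers $(K_j,u_j)$, $0\in K_j$, $\int_{B_1}|\n u_j|^2\,d\cL^n\to 0$, but for which the conclusion fails for each $j$ (either $0$ is not a smooth point of $K_j$, or $K_j\cap B_{1/2}$ is not within $\e$ in the $C^{1,\a}$ topology of a pair of parallel hyperplanes one of which passes through $0$). Applying Lemma \ref{lem:secblowup} along a subsequence, $K_j\to K$ locally in Hausdorff topology, $(K,u)$ is again an $\e$-regular $F_0$ minimizer (limits of $\e$-regular minimizers are $\e$-regular), $0\in K$ by the uniform lower density bound, and $\int_{B_R}|\n u|^2\,d\cL^n=\lim_j\int_{B_R}|\n u_j|^2\,d\cL^n$ for each $R$. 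Thus $\int_{B_1}|\n u|^2\,d\cL^n=0$, so $u$ is constant on each component of $B_1\setminus K$, hence (by unique continuation for harmonic functions) on each component of $\R^n\setminus K$ meeting $B_1$.

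The heart of the matter is to see that $K$ is a hyperplane through $0$. By the uniform positive volume density of each positive-phase component (as in Corollary \ref{cor:volden} and the last line of the proof of Lemma \ref{lem:blowup}), some component $V_1$ of $\R^n\setminus K$ on which $u$ equals a positive constant has $0\in\p V_1$. I claim that two such components have $0$ in their boundary. If not, then near $0$ the set $K$ equals $\p V_1$, with the zero phase on the other side; since $u$ is constant on $V_1$, Lemma \ref{lem:setmin} shows $V_1$ is a perimeter minimizer in a small ball about $0$, so $\p V_1$ is (by minimal surface regularity, $n\le 7$) smooth and area-minimizing near $0$, with tangent hyperplane $\pi_0$. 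Blowing up at $0$ --- applying Lemma \ref{lem:secblowup} to the rescalings $(K/\lambda_k,\lambda_k^{-1/2}u(\lambda_k\,\cdot))$, again $\e$-regular $F_0$ minimizers by scale invariance --- gives an $F_0$ minimizer whose set is $\pi_0$, with a positive phase of fixed weight $\mu_1>0$ on one side, the zero phase on the other, and $u\equiv 0$. But then deleting $\pi_0\cap B_R$ is admissible (a single positive component, so no separation constraint) and yields a competitor of energy $0<\mu_1\cH^{n-1}(\pi_0\cap B_R)$, contradicting minimality. Hence two positive-phase components $V_1,V_2$ satisfy $0\in\p V_1\cap\p V_2$, so Lemma \ref{lem:touching} (after rescaling so that a small ball about $0$ becomes $B_1$) gives $N=2$, the zero phase empty, $u$ locally constant, and $U_1,U_2$ complementary, \emph{globally} perimeter-minimizing sets. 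Then $K=\p U_1$ is an entire area-minimizing boundary in $\R^n$, and since $n\le 7$ --- no singular area-minimizing cones exist in $\R^n$ in this range, forcing $\p U_1$ to be a hyperplane (see \cite{Giusti}) --- $K$ is a hyperplane $\pi\ni 0$.

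To finish, transfer back. Since $K_j\to\pi$ locally in Hausdorff topology, $\sup_{x\in K_j\cap B_1}d(x,\pi)<\e$ for $j$ large, so Proposition \ref{prop:fimpsm} applies at scale $1$ (valid since $r_0=\8$ by $\e$-regularity): $K_j\cap B_{1/2}$ is the union of the graphs of two $C^{1,\a}$ functions $g_{\pm,j}$ with $\|g_{\pm,j}\|_{C^{1,\a}}\le 1$, one passing through $0$ because $0\in K_j$. Interpolating the uniform $C^{1,\a}$ bound against the $C^0$ convergence $g_{\pm,j}\to\pi$ coming from the Hausdorff convergence, we get $g_{\pm,j}\to\pi$ in $C^{1,\a'}$ for $\a'<\a$; hence for $j$ large $K_j\cap B_{1/2}$ consists of two $C^{1,\a}$ graphs within $\e$ of the (coincident, hence parallel) pair of hyperplanes $\pi,\pi$, one through $0$, and in particular $0$ is a smooth point of $K_j$ --- contradicting the failure of the conclusion for $K_j$.

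The main obstacle is excluding, in the limit, the configuration in which a single positive component borders the zero phase near $0$: infinitesimally this is ruled out only because after blow-up $u\equiv 0$, so the capacity cost of ``bridging'' vanishes while the saved perimeter does not; this uses both that $u$ is constant near $0$ (from the vanishing energy) and that $n\le 7$ (so the tangent cone is a hyperplane rather than a singular minimal cone). The remainder is a standard compactness and dimension-reduction scheme built on Lemmas \ref{lem:setmin} and \ref{lem:touching} and classical minimal surface theory.
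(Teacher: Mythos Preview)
Your proof is correct and reaches the same conclusion, but the route differs from the paper's in a notable way. The paper splits the argument in two. First, it observes that once $u_\infty$ is locally constant, Lemma~\ref{lem:setmin} makes each positive component of $B_1\setminus K_\infty$ a perimeter minimizer, so $K_\infty\cap B_1$ is a union of minimal hypersurfaces and hence (since $n\le 7$) smooth at $0$; $\e$-flatness of the limit then transfers back immediately via $\e$-regularity, giving the smooth-point conclusion. For the second conclusion the paper invokes Bernstein's theorem to make each positive component intersecting $B_1$ a half-space, and then simply treats the three possible pictures---one half-space, two complementary half-spaces, or two parallel hyperplanes separated by a zero slab---handling the last case by applying $\e$-regularity at a scale smaller than the gap rather than ruling it out.

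You instead prove the sharper statement that the limit must be a \emph{single} hyperplane through $0$, by eliminating the configuration in which only one positive component borders $0$ (zero phase on the other side) via a further blow-up and a deletion competitor. This is valid---after the second blow-up $u_\infty\equiv 0$, so merging the positive half-space with the zero half-space through the hole costs no Dirichlet energy and violates no separation constraint (there being only one positive component), contradicting minimality. As a byproduct you show that the paper's ``two parallel hyperplanes'' sub-case is actually vacuous. But this costs an extra compactness step (Lemma~\ref{lem:secblowup} applied a second time) and a somewhat delicate admissibility check; the paper's direct appeal to Lemma~\ref{lem:setmin} plus minimal-surface regularity is shorter and avoids the second blow-up entirely, at the price of carrying one extra case through the final transfer step.
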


\begin{proof}
We first show that (for a small enough $\z$) $0$ is a smooth point. If this were not the case, we could find a sequence of $\e$-regular $F_0$ minimizers  $(K_k,u_k)\rightarrow (K_\8,u_\8)$ in the sense of Lemma \ref{lem:secblowup}, with $0\in K_\8$ not a smooth point and $u_\8$ constant on $B_1$. By Lemma \ref{lem:setmin}, $K_\8\cap B_1$ is a union of minimal surfaces, and hence (by the regularity of minimal surfaces) $0$ is a smooth point of $K_\8$.  This contradicts $0$ not being a smooth point of any $K_k$. 

 We now show the other property, arguing by contradiction and proceeding in the same manner to extract a convergent subsequence. Note that each component $U_i$ associated with $(K_\8,u_\8)$. intersects nontrivially with $B_1$ is an entire perimeter minimizer by Lemma \ref{lem:setmin}, and so a half-space by Bernstein's theorem. There are now two possibilities: either only one component $U_i$ intersects $B_1$, or at least two do. In the first case, for $k$ large enough, the $\e$-flatness hypothesis for $(K_k,u_k)$ is satisfied on $B_1$, and hence $(K_k,u_k)$ is given by a pair of $C^{1,\a}$ graphs on $B_\eta$ for some $\eta>0$ depending only on the $\e$-regularity constants (indeed, in this case a single graph would suffice). If at least two components $U_1, U_2$ intersect $B_1$, then it follows that either $K$ is a line and $U_1,U_2$ are complementary (in which case the same argument as before applies, now requiring two graphs) or $K$ is a union of two parallel lines, one passing through $0$. Choose $r$ smaller than the distance between these two lines, and observe that for $k$ large enough and any $x\in K_k\cap B_1$, $B_r(x)$ is $\e$-flat, and so $K\cap B_r(x)$ is given by a $C^{1,\a}$ graph $\e$-close to one of the two lines. Together with the fact that for $k$ large, $K_k$ is contained in an $r$-neighborhood of $K_\8$, this leads to a contradiction. 
\end{proof}

\begin{proof}[Proof of Theorem.]
The second conclusion follows directly from applying Lemma \ref{lem:encrit} to a blow-up sequence with
\[
 \frac{1}{R_k^{n-1}}\int_{B_{R_k}}|\n u|^2 d\cL^n \rightarrow 0.
\]

Now we show the first conclusion. Assume that $(K,u)$ is not of type (b). Then there is a sequence $R_k\rightarrow \8$ such that
\[
 \frac{1}{R_k^{n-1}}\int_{B_{R_k}} |\n u|^2 d\cL^n \leq \z
\]
for each $k$. We may then apply Lemma \ref{lem:encrit} to $K/R_k$ to obtain that $K\cap B_{\eta R_k}$ is a union of two $C^{1,\a}$ graphs in some coordinate system $\R^{n-1}\times\R$, and moreover
\[
  d(K, \pi_1 \cup \pi_2;B_1) \leq CR_k^{-\a},
\]
where $\pi_1,\pi_2$ are the tangent hyperplanes to $K$ at $0$ and at the unique other point $x\in K \cap \{0\}\times \R$. Taking the limit in $k$ gives that $K \cap B_1$ coincides with a pair of hyperplanes. Repeating for dilates of $(K,u)$ gives that $K$ is of type (a).

We now show the third conclusion. If at least two of the components $U_i$, $i\geq 1$ share a boundary point, Lemma \ref{lem:touching} allows us to conclude. If not, consider the blow-down limit
\[
 \1 \frac{K}{R_k},\frac{u(R_k \cdot)-c_k}{\sqrt{R_k}} \2 \rightarrow (K_\8,u_\8)
\]
along some subsequence in the sense of Lemma \ref{lem:secblowup}. It is simple to check that $N\geq 2$ for $(K_\8,u_\8)$, and that $0$ lies in the boundary of at least two of the components $U_i$, $i\geq 2$. Thus $K_\8$ is a line, and $u_\8$ is constant. It follows that
\[
 \frac{1}{R_k^{n-1}}\int_{B_{R_k}} |\n u|^2 d\cL^n \rightarrow 0,
\]
and so $K$ is not of type (b). Thus $K$ is of type (a), and the conclusion follows.
\end{proof}

\section{Flatness Criteria in the Plane}

In the plane, the structure of $F_0$ minimizers is particularly simple, and we present a criterion for smoothness that applies at points with positive density of the set $\{u=0\}$ We suspect that every point in $K$ is a smooth point when $n=2$, but this is left open. We also point out that in this case in particular, the global problem has a strong resemblance to the famous Mumford-Shah problem, and many of the partial results on ``global Bonnet minimizers'' can be carried over to our setting (for example the formulas of Bonnet \cite{Bon} and Leger \cite{L}, and some of the theorems of David and Leger \cite{DL}, see also \cite{D}). The difference between the problem treated here and in the Mumford-Shah case is that we count perimeter with a multiplicity determined by the Lebesgue density of the set it is bounding, whereas the Mumford-Shah functional counts it with constant multiplicity one. As a consequence, while the Mumford-Shah functional forms cracks and propellers, ours only forms ``holes.''

We only consider the case $N=1$, as the other is already treated in the previous section.

\begin{lemma}\label{lem:convex} Let $(K,u)$ be an $\e$-regular minimizer of $F_0$ with $K$ not equal to a line, and $n=2$, $N=1$. Then
\begin{enumerate}
 \item $U_0$ is composed of a disjoint union of open, convex sets.
 \item At every point in $K$, $\theta_1<1$.
 \item Each component $V$ of $U_0$ is bounded, and has as boundary a $C^\8$ curve.
 \item There is a universal constant $c_0$ such that
 \[
  c_0 P(V)\leq d(\bar{V}, K\sm \p V) \leq c_0^{-1} P(V),
 \]
and such that for any line $\pi$, the projection $\pi(V)$ of $V$ onto $\pi$ satisfies
\[
 \cL^1 (\pi(V))\geq c_0 P(V).
\]
\end{enumerate}
\end{lemma}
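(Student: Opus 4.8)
The plan is to establish the four assertions roughly in order, deriving (2), (3), and (4) from (1) together with the Euler--Lagrange machinery in Lemma \ref{lem:EL} and the one-sided minimality of Lemma \ref{lem:setmin}. The main work is (1): showing each component of $U_0$ is convex. I would argue that if some component $V$ of $U_0$ failed to be convex, one could exhibit a competitor that strictly decreases $F_0$. Concretely, replacing $V$ by its convex hull $\hat V$ is admissible provided $\hat V$ does not swallow a piece of another component $U_i$ with $i\geq 1$ (if it did, one would instead cut off that piece first, using the inward minimality from Lemma \ref{lem:setmin}); this relabels part of $K$ as interior to $U_0$ and, since in the plane the perimeter of a planar set strictly exceeds that of its convex hull unless it is already convex, it strictly decreases the surface term $\sum_i \mu_i\int_{\p U_i}2\theta_i\,d\cH^{1}$ while leaving the Dirichlet energy of the single nonzero component $U_1$ unchanged (as $u\equiv 0$ near $V\subset U_0$ anyway, and $\hat V$ is still disjoint from $U_1$). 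The subtle point is making sure the convexification stays inside $B_R$ for the local minimality test, which is fine by restricting to small pieces, and that one does not accidentally disconnect $U_1$ or merge it with another $U_i$; here $N=1$ is what makes the bookkeeping clean, since there is only one positive component to protect.

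Given (1), assertion (2) is almost immediate: if $\theta_1(x)=1$ at some $x\in K$, then $x$ has Lebesgue density $1$ for $U_1$, so density $0$ for $U_0\cup K$; but since $U_0$ is a disjoint union of open convex sets whose boundaries are (locally) contained in $K$, and $K$ has positive $\cH^1$ density at every one of its points by \eqref{eq:density}, this is incompatible unless $x\notin\bar{U_0}$ --- in which case $x$ is a two-sided regular point of $\p U_1$, and one applies Lemma \ref{lem:EL} (the two-sided curvature identity \eqref{eq:ELc2}) together with the strict maximum principle or a direct perturbation argument to rule out density $1$. Actually the cleanest route: near such a point $K$ is a single $C^{1,\alpha}$ graph with $U_1$ on one side and $U_0$ on the other (Proposition \ref{prop:fimpsm} with the $u=0$ clause forces $g_-=g_+$ smooth), so $\theta_1=1/2$ there, a contradiction; and at a genuine two-sided point of $\p U_1$ (both sides in $U_1$) we would need $K$ to bound $U_1$ on both sides with $u$ harmonic across, which forces $K$ to be a line by \eqref{eq:ELc2} and unique continuation, excluded by hypothesis.

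For (3): each component $V$ of $U_0$ is open, convex, and has locally finite perimeter; I claim it is bounded. If $V$ were unbounded, convexity would force it to contain a half-plane or a full strip, hence $U_1=\R^2\sm(K\cup\bigcup U_i)$ would be squeezed into a region of infinite perimeter or would itself be relabelable into $U_0$ (as in the remark following Lemma \ref{lem:blowup}, an $F_0$ minimizer cannot have a bounded positive component, and dually an unbounded convex hole forces pathological structure), contradicting the density estimate \eqref{eq:density} on $K=\p V\cup\cdots$ or the finiteness of $N$. Once $V$ is bounded and convex, $\p V\subset K$, so by Lemma \ref{lem:setmin} applied with $i=1$ --- more precisely, $V$ is a component of $U_0$, so the \emph{adjacent} component is $U_1$, and $U_1$ is an inward perimeter minimizer --- $\p V$ is, away from where it meets other pieces of $K$, a portion of $\p U_1$, and $u$ restricted to $U_1$ satisfies the Neumann condition there; the Euler--Lagrange equation \eqref{eq:ELc2}, now one-sided since $u\equiv0$ on the $V$-side, reads $-\mu_1 H_{\p V}=|\n^K u|_{U_1}|^2\geq 0$, a mean-curvature equation with bounded right-hand side, so standard elliptic regularity bootstraps $\p V$ to $C^\infty$. (A point where two components of $U_0$ would touch is excluded by convexity plus the strict maximum principle of \cite{Simon}, as in Lemma \ref{lem:touching}.)

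Finally (4) is the quantitative nondegeneracy. The upper bound $d(\bar V,K\sm\p V)\leq c_0^{-1}P(V)$: if $V$ were very far from the rest of $K$ relative to its size, delete $V$ from $K$ (relabel it as part of $U_1$) and solve the harmonic extension of $u|_{U_1}$ across $V$; since $V$ is a convex island sitting in a large ball with no other part of $K$ nearby, the energy cost of this extension is $O(\osc_{\p V}u\cdot\text{something})$ which one estimates via the density estimates and the $H^1$ bound against the perimeter gain $2\mu_1 P(V)$, reaching a contradiction for $c_0$ small --- this is the analogue of the perimeter/capacity trade-off discussed in the introduction. The lower bound $c_0 P(V)\leq d(\bar V,K\sm\p V)$ and the projection estimate $\cL^1(\pi(V))\geq c_0 P(V)$ both follow from the density estimate \eqref{eq:density}: $K$ has $\cH^1$-density bounded below, so the piece $\p V$ of $K$ cannot be too "thin" --- two long nearly-parallel arcs a tiny distance apart would violate the upper density bound in a thin tube, and a convex set whose projection on every line is much smaller than its perimeter is trapped in a thin slab, again contradicting \eqref{eq:density} applied in that slab. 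I expect the \textbf{main obstacle} to be making the convexification competitor in step (1) rigorous: one must verify admissibility (the new $K$ is still closed, countably rectifiable, still separates the components correctly) and that the surface energy genuinely decreases despite the multiplicity weights $\theta_i$ and the presence of $U_0$, which requires care with how $\p\hat V$ picks up density-$1/2$ versus density-$1$ portions relative to $U_1$.
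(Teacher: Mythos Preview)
Your approach to (1) via convexification is plausible in spirit but differs from the paper's and is not justified at the level you present it. The paper does not pass to the convex hull; instead it takes two points $x,y\in V$, considers the segment $l=[x,y]$, and argues by Jordan--curve topology that if $l$ meets $U_1$ then $U_1\setminus l$ must have a \emph{bounded} connected component. That bounded piece can then be relabeled into $U_0$ (replacing an arc of $K$ by a chord), which strictly decreases $F_0$ via Lemma~\ref{lem:setmin}. Your convex-hull competitor runs into exactly the bookkeeping issues you flag (what happens when $\hat V$ swallows other pieces of $K$ or of $U_0$), and you do not resolve them; the segment-by-segment argument avoids this entirely.

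There is a genuine error in your argument for (3). An unbounded convex planar set need \emph{not} contain a half-plane or a strip: a sector of small aperture, or the epigraph $\{y>x^2\}$, are counterexamples. The paper's route is different and unavoidable here: one takes a blow-down along $R_k\to\infty$, observes that $\partial V/R_k$ converges (by convexity) to the boundary of a cone or to a ray, and then uses that this ray sits inside $K_\infty$ to find a smooth point where Lemma~\ref{lem:EL} forces $u_\infty$ to be locally constant. From Lemma~\ref{lem:secblowup} one gets $R_k^{-1}\int_{B_{R_k}}|\nabla u|^2\to 0$, and Theorem~\ref{thm:energygap} then says $K$ is a line, the excluded case. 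Your shortcut does not survive the counterexamples.

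Your argument for (4) also has gaps. The density estimate \eqref{eq:density} by itself cannot give the projection bound or the lower separation bound: two parallel lines arbitrarily close together satisfy \eqref{eq:density} with uniform constants (and indeed arise as $F_0$ minimizers), so ``two long nearly-parallel arcs a tiny distance apart'' is not excluded by density alone. The paper proves the projection estimate by compactness and contradiction: rescale so that $\operatorname{diam}(V_k)=1$ and $V_k$ lies in a slab of width $1/k$, pass to a limit where $K_\infty$ contains a unit segment and hence is a line, and then invoke Lemma~\ref{lem:encrit} to conclude that for large $k$ the set $K_k\cap B_{100}$ is a union of two $C^{1,\alpha}$ graphs. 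But $\partial V_k$ is a smooth Jordan curve contained in $B_{100}$, which cannot be written as two such graphs --- this is the contradiction. A second compactness argument of the same flavor (now rescaling by the flatness radius $q(V_k)\to 0$) gives the uniform flatness and hence the separation bounds. Your capacity/energy trade-off for the upper separation bound is reasonable heuristically but is not the mechanism used, and the lower bound genuinely requires the $\varepsilon$-regularity input of Lemma~\ref{lem:encrit}, not just \eqref{eq:density}.
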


The fourth property says that each ``hole'' $V$ is separated from the rest of $K$ in a scale-invariant way, and furthermore is relatively round. An alternative way of describing the last inequality is that there is an ellipse $Q$ such that $Q\ss V$ and $V$ is contained in its double (this is true for all convex sets, by the lemma of John \cite{John}), and we claim that the eccentricity of the ellipse is bounded away from $1$.

The third property implies that the union of the boundaries of the components of $U_0$, the sets $V_i$, contains only smooth points. The question then remains as to whether $K\sm \cup_i \p V_i$ is empty or not. We effectively show that any singular point is an accumulation point of holes of decreasing size.

\begin{proof}
 We will first show that every component $V$ of $U_0$ is convex. Indeed, take any  two points $x,u\in V$, and consider the line segment $l$ joining them. We aim to prove that if $U_1 \cap l$ is nonempty, then $U_1 \sm l$ has a bounded connected component. If that is the case, then an application of Lemma \ref{lem:setmin} yields a contradiction.
 
 To show this, we must eliminate several possibilities. First, say $U_1 \sm l$ is connected. This contradicts the fact that $V$ was assumed connected. Indeed, if $z\in l \cap U_1$, then there is a small ball $B_r(z)\ss U_1$ separated into two half-balls by $l$, and points $w_1,w_2$ in each half-ball. Now take a (simple) curve $\gamma: [0.T]\rightarrow U_1 \sm l$ connecting $w_1$ and $w_2$, and extend it to a loop by adjoining the line segment connecting $w_1$ to $w_2$. Possibly modifying $\gamma$ within $B_r(z)$, this loop may be taken to be simple, and so divides $\R^2$ into two components. As the loop only intersects $l$ at one point, $x$ and $y$ each lie in different components; it follows (as the loop stays in the complement of $V$) that these components form a nontrivial separation of $V$.
 
 Next, we rule out the case that $U_1 \sm l$ is disconnected, but all components are unbounded. In this case, take $\gamma : [0,1] \rightarrow V$ to be a simple curve connecting $x$ and $y$. Let $z$ and $B_r(z)$ be as before, and note that $B_r(z)\cap l$ does not intersect $\gamma$. Let $l'\ss l$ be the line segment contained in $l$, containing $z$, and intersecting $\gamma$ only at the endpoints (say $\gamma(t_1)$ and $\g(t_2)$. Then the union $\g'=l'\cup \g |_{[t_1,t_2]}$ is a simple loop, and separates the plane into two components, one of which is bounded. As $\g' \ss \R^2 \sm (U_1 \sm l)$, each of these components contains (at least) one nontrivial component of ($U_1 \sm l$), so one of those components must be bounded. This establishes $(1)$.
 
 The property $(2)$ follows from the regularity criterion (1) in Theorem \ref{thm:ndimsm}, which implies that any point of density 1 is smooth, together with Lemma \ref{lem:touching}, which then says that $N=2$ and $k$ is a line. This contradicts the assumption that $N=1$.

 We now show each component $V$ of $U_0$ is bounded. Indeed, assume $0\in \p V$ and $V$ is unbounded. We claim (entirely by virtue of $V$ being convex) that the blow-downs $\p V /R$ converge in local Hausdorff topology to the boundary of a convex cone (or to a ray) as $R\rightarrow \8$. To see this, after choosing an appropriate coordinate system write $\p V = \{ (x, g(x))\in \R^2 | x\in \R\}$ for some convex function $g:\R\rightarrow \R$ with $g(0)=0$ and $g\geq 0$. Then by virtue of convexity,
 \[
  \frac{g(x)}{x}
 \]
is an increasing function of $x$, and so has a limit $g_\pm$ (possibly infinite) as $x\rightarrow \pm \8$. It is now straightforward to show that $\p V$ converges to the cone containing the rays $\{(x,g_+ (x)): x\geq 0\}$ if $g_+$ is finite, $\{(x,g_- (x)): x\leq 0\}$ if $g_-$ is finite, and $\{(0,y):y\geq 0\}$ if one of them is infinite. 

Consider then a subsequence $R_k$ such that
\[
 \1\frac{K}{R_k}, \frac{u(R_k \cdot )-c_k}{\sqrt{R_k}} \2 \rightarrow (K_\8,u_\8)
\]
in the sense of Lemma \ref{lem:secblowup}. We will show that $u_\8$ is locally constant. If $\R^2 \sm K_\8$ has at least two connected components not in $(U_0)_\8$, then this follows from Theorem \ref{thm:energygap}. If not, we know that $K_\8$ contains at least one ray, and hence a smooth point $z$ on this ray. In a neighborhood of this smooth point, we know that $\theta_i<1$, and so must be $\frac{1}{2}$, meaning that $K_\8 \cap B_\r (z)$ is a single line segment. Applying Lemma \ref{lem:EL}, we have that $u$ is locally constant on $B_\r \cap K_\8$, and so on the entire component $U_1$.

Now, from Lemma \ref{lem:secblowup},
\[
 \frac{1}{R_k}\int_{B_{R_k}}|\n u|^2 d\cL^n \rightarrow 0,
\]
and so by Theorem \ref{thm:energygap} we must have $K$ is a line; this is a contradiction.

Let us now fix $V$ a component of $U_0$ with $0\in \p V$, and study the blow-up
\[
 \1\frac{K}{r_k}, \frac{u(r_k \cdot )-c_k}{\sqrt{r_k}} \2 \rightarrow (K_\8,u_\8)
\]
along some sequence $r_k \rightarrow 0$. By an argument analogous to the one for blow-downs above, $\p V/r_k$ converges to its (unique) tangent cone containing either one or two rays. As argued previously, this implies that $u_\8$ is locally constant, and this gives that
\[
 \frac{1}{r_k}\int_{B_{r_k}}|\n u|^2 d\cL^n \rightarrow 0.
\]
Applying Lemma \ref{lem:encrit}, this means that $0$ is a smooth point of $K$. This completes the proof of $(3)$.

Assume that the uniform projection property in (4) fails. After rescaling and rotating, this means there is a sequence $(K_k,u_k)$ of $F_0$ minimizers with components $V_k$ of $(U_0)_k$ such that $0\in \p V_k$, $\text{diam}(V_k) =1$, and
\[
 V_k \ss \{x>0\} \times \{-\frac{1}{k}< y <\frac{1}{k}\}.
\]
Extracting a subsequence converging to $(K_\8,u_\8)$, we see that $K_\8$ must contain the line segment $\{0\}\times [0,1]\}$. This implies that $K_\8$ is a line or union of two lines and $u_\8$ is locally constant. In particular, for $k$ large enough, we have that
\[
 \int_{B_{200}}|\n u_k|^2 d\cL^n \leq \z
\]
for $\z$ arbitrarily small. By Lemma \ref{lem:encrit}, we have that $K_k\cap B_{100}$ is given by a union of two $C^{1,\alpha}$ graphs. On the other hand, around $V_k$ there is a small neighborhood in which $K$ is a smooth Jordan curve, and this may not be expressed as the union of two $C^{1,\a}$ graphs.

A consequence of this is the fact that if $V$ is a component of $U_0$ with $0\in \p V$ and $\text{diam}(V)=1$, then $\cL^2 (B_{1/2}\cap V)\geq c>0$. As for a convex set the quantity
\[
 \frac{\cL^2 (V \cap B_\r)}{\r^2}
\]
is decreasing in $\r$, we have the uniform density estimate
\begin{equation}\label{eq:uniformzden}
 \frac{\cL^2 (V \cap B_\r)}{\r^2}\geq c_0\qquad \forall \r\in (0,\frac{1}{2}).
\end{equation}

We now show the following uniform flatness property: let $V$ be a component of $U_0$ with $\text{diam}{V}=1$ and $0\in \p V$; then (after a rotation) for some universal $q>0$, $B_q \cap K \ss \R \times \{|y|\leq q \e\}$. Notice that by the smoothness of each component $V$, we have that
\[
 q(V);=\max \3 q| B_q \cap K \ss \R \times \{|y|\leq q \e\} \text{ after rotation} \4>0.
\]
We claim that $q(V)$ is bounded from below by a universal constant. Assume this is not the case; then there is a sequence $(K_k,u_k)$ and components $V_k$ of $(U_0)_k$ such that $0\in \p V_k$, $\text{diam}(V_k)=1$, and $q(V_k)\rightarrow 0$. We will assume that the line $\{y=0\}$ is the tangent line to each $V_k$ at $0$, and that $V_k\ss \{y>0\}$. Then consider the dilations
\[
 \1 K_k/q(V_k), \frac{u_k(q(V_k)\cdot)-c_k}{\sqrt{q_k}} \2\rightarrow (K_\8,u_\8),
\]
where the convergence is as in Lemma \ref{lem:secblowup} after passing to a subsequence. Then as $0\in K_k$ for each $k$, $0\in K_\8$. By the maximality of $q(V_k)$ and the convexity of each $V_k$, one of the two points $\p B_{q(V_K)} \cap \{y = q(V_k)\e\}$ lies in $K_k$; it follows that one of the two points in $\p B_1 \cap \{y=\e\}$ lies in $K_\8$. The convex sets $V_k/q(V_k)$ converge in $L^1_{\text{loc}}$ to an unbounded convex component $V$ of $(U_0)_\8$, with $V\ss \{y>0\}$. By the uniform density property \eqref{eq:uniformzden}, $\cL^2 (V_k/q(V_k) \cap B_\r)\geq c_0 \r^2$ for each $\r\leq 1$, so  $V$ is nonempty and $0\in \p V$. By property (3), it follows that $K_\8$ must be a line or pair of parallel lines, so in particular $V=\{y>0\}$. This contradicts the fact that one of $\p B_1 \cap \{y=\e\}$ lies in $K_\8$.

The uniform separation property in (4) follows immediately from this uniform flatness property.
 \end{proof}

We offer the following corollary, which gives a better description of the singular set of an $F$ minimizer. 

\begin{corollary}
 Let $u$ be a local $F$ minimizer on $\W$. Then for each component $V$ of $\{u=0\}\cap B_r(x)$ with $B_r(x)\cc \W$, we have that $\p V \cap B_\r(x)$ is a finite union of $C^{1,\a}$ arcs for $\r<r$. The arcs may meet only pairwise, and only at their endpoints, where they are tangent to each other. The arcs are $C^\8$ on their interiors.
\end{corollary}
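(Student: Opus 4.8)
The plan is to analyze $\p V$ point by point: first to show that every point of $\p V$ is a \emph{smooth point} of $K_u$ (one at which Proposition~\ref{prop:fimpsm} applies), then to read off the local structure near such points, and finally to rule out accumulation of the exceptional ``cusp'' points. Throughout $n=2$. Note first that $\p V\ss K_u$, since a boundary point of $V$ lying off $K_u$ would have a ball around it on which $u$ is harmonic and vanishes on the $V$--side, hence vanishes identically --- impossible. Fix $\r<r$; the set $\p V\cap\overline{B_\r(x)}$ is compact, and it suffices to describe it.

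\emph{Every point of $\p V$ is a smooth point of $K_u$.} Let $z\in\p V$ and blow $u$ up at $z$ using Lemma~\ref{lem:blowup}, obtaining an $\e$--regular $F_0$ minimizer $(K_\8,u_\8)$ with $0\in K_\8$. If $u_\8$ is locally constant, then $\int_{B_R}|\n u_\8|^2=0$, so by part (6) of Lemma~\ref{lem:blowup} the density $r^{1-n}\int_{B_r(z)}|\n u|^2$ tends to $0$ along a sequence $r\searrow 0$; since $n=2\le 7$, the contrapositive of the second assertion of Theorem~\ref{thm:energygap} then forces $z$ to be a smooth point. So assume $u_\8$ is nonconstant. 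Then $N=1$ (if $N\ge2$, the last assertion of Theorem~\ref{thm:energygap} makes $u_\8$ locally constant) and $K_\8$ is not a line (otherwise $u_\8$ would be a nonnegative harmonic function on a half--plane with zero Neumann data on the bounding line, hence constant after even reflection and Liouville's theorem). Now Lemma~\ref{lem:convex} applies to $(K_\8,u_\8)$: the components of $(U_0)_\8$ are bounded open convex sets with $C^\8$ boundary, each separated from the rest of $K_\8$, and $\theta_1<1$ at every point of $K_\8$. The point $0$ must lie on the boundary of one such component $W_\8$: if not, $\{u_\8=0\}\sm K_\8$ is empty near $0$, so both local sides of $K_\8$ near $0$ belong to $U_1$ (there being only one positive component), making $K_\8$ a local slit inside $U_1$ and forcing $\theta_1(0)=1$, a contradiction. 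Hence $K_\8$ coincides near $0$ with the $C^\8$ curve $\p W_\8$; in particular it is trapped within $\e\r$ of its tangent line at $0$ for $\r$ small, and by the Hausdorff convergence $\tfrac{K_u-z}{s_k}\to K_\8$ the hypothesis of Proposition~\ref{prop:fimpsm} is met for $K_u$ near $z$ at a comparable scale. Thus $z$ is a smooth point of $K_u$.

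\emph{Local structure near $\p V$, and cusps.} Fix $z\in\p V$. By Proposition~\ref{prop:fimpsm}, on some $B_{\r_z/2}(z)$ the set $K_u$ is the union of the two $C^{1,\a}$ graphs of $g_-\le g_+$ over the tangent line, and $\{u=0\}\sm K_u$ inside this ball is exactly the open ``lens'' $\{g_-<s<g_+\}$ --- unless $g_-\equiv g_+$, in which case $K_u$ is locally a single $C^\8$ arc equal to $\p V$ near $z$. When the lens is nonempty, $V$ contains the lens piece passing through its points near $z$, so $\p V$ near $z$ is the corresponding portion of $g_\pm$: if $z$ is interior to that piece ($g_-<g_+$ near $z$) then $\p V$ is locally a single $C^{1,\a}$ graph with $\{u=0\}$ on one side, hence $C^\8$ after rescaling and reapplying Proposition~\ref{prop:fimpsm}; if $z$ is an endpoint of that piece ($g_-(t_z)=g_+(t_z)$), then $\p V$ near $z$ is exactly the two arcs $g_\pm$, which meet at $z$ and are tangent there since $g_-\le g_+$ with equality at the $C^1$ point $t_z$ forces $g_-'(t_z)=g_+'(t_z)$. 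Exactly two arcs meet at such a \emph{cusp}, because the two lens pieces adjacent to $t_z$ are distinct components of $\{u=0\}\sm K_u$ (joining them would require passing around $z$ through $\{u>0\}$), so $V$ contains at most one of them.

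\emph{Finiteness of cusps, conclusion, and the main obstacle.} Suppose, for contradiction, that cusps $z_j\in\p V\cap\overline{B_\r(x)}$ accumulate at $z_*$. Then $z_*\in\p V$ is a smooth point by the above, so near $z_*$ we have $K_u$ equal to the graphs of $g_-\le g_+$ with $g_+-g_-\ge0$ of class $C^{1,\a}$, having zeros $t_j\to t_*$ and positive between consecutive zeros. As $g_+-g_-$ is $C^1$ and vanishes at the accumulation point of its zeros, its derivative vanishes at $t_*$; hence on each interval between consecutive zeros the ``bump'' of $g_+-g_-$ has height $o(\text{width})$, so the corresponding lens piece $P_j\ss V$ is a thin sliver whose aspect ratio tends to $0$. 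Rescaling $u$ at $P_j$ by $\text{diam}(P_j)$ and passing to a blow--up limit (Lemma~\ref{lem:blowup}) produces a hole of a planar $F_0$ minimizer of unit diameter and vanishing aspect ratio; the limit cannot be degenerate (in the case $u_\8$ constant, Lemma~\ref{lem:setmin} makes the hole both an inward and an outward perimeter minimizer in $\R^2$, whose minimal boundary is empty, so the hole is empty), so Lemma~\ref{lem:convex}(4) applies and its projection bound is violated. Therefore cusps cannot accumulate, and there are finitely many in $\overline{B_\r(x)}$. Away from them, $\p V\cap\overline{B_\r(x)}$ is locally a single $C^\8$ arc, hence a compact one--manifold outside a finite set; this exhibits it as a finite union of $C^{1,\a}$ arcs, $C^\8$ on their interiors, any two of which meet only at a common endpoint, which is a cusp, where they are tangent. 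I expect this last step, together with the tail of the smoothness step, to be the main difficulty: both rest on transporting the \emph{quantitative} roundness and separation of holes --- available only for $F_0$ minimizers through Lemma~\ref{lem:convex} --- down to every infinitesimal scale of the $F$ minimizer by blow--up, and on certifying that the relevant blow--up limits are of the nondegenerate type ($N=1$, $u_\8$ nonconstant, $K_\8$ not a line) to which Lemma~\ref{lem:convex} applies, which is where Theorem~\ref{thm:energygap}, the slit exclusion via $\theta_1<1$, and the Liouville argument all enter. The seemingly elementary claim that the touch points disconnect the zero set also needs some care in the $SBV$ setting.
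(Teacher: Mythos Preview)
Your overall plan is sound and the local-structure discussion is fine, but the two substantive steps take a different route from the paper's, and in each case the paper's argument is shorter and avoids a difficulty you flagged.

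\textbf{Smoothness of every point of $\partial V$.} In your nonconstant blow-up case you use $\theta_1<1$ (Lemma~\ref{lem:convex}(2)) to force $0$ onto some hole boundary $\partial W_\infty$. The sentence ``if not, $\{u_\infty=0\}\sm K_\infty$ is empty near $0$'' is not justified: nothing prevents infinitely many holes of shrinking diameter from accumulating at $0$, so there need not be a hole-free neighborhood, and the $\theta_1(0)=1$ contradiction does not follow directly. The paper avoids this. It notes that $\partial V\cap\bar B_{tr_k}$ has a bounded number of connected components (from the finite local component count for $\{u>0\}$), and that this survives Hausdorff limits; hence the limit set $K_0\subset K_\infty$ has an unbounded connected component through $0$. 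But Lemma~\ref{lem:convex} says that when $K_\infty$ is not a line, every connected component of $K_\infty$ is a bounded smooth Jordan curve. Thus $K_\infty$ must be a line (or two), $u_\infty$ is locally constant, and Theorem~\ref{thm:energygap} gives smoothness. No case split on $u_\infty$ and no hole-accumulation analysis is needed.

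\textbf{Isolation of cusps.} Your approach rescales thin lens pieces and appeals to the projection bound in Lemma~\ref{lem:convex}(4). This is workable in principle but delicate: you must keep track of which lens pieces actually lie in $V$ (you already argued that at a cusp only one of the two adjacent pieces can), and you must certify the blow-up limit is of the nondegenerate type to which Lemma~\ref{lem:convex} applies. The paper instead gives a direct Jordan-curve argument. At a cusp $0$, take $\sigma$ so small that on $\{a>0\}\cap B_\sigma$ the graphs $g_\pm$ do not touch (that lens piece is the one in $V$). If $V$ also met $\{a<0\}\cap B_\sigma$ at some $z$, connect $z$ to a point $w\in V\cap\{a>0\}$ by a curve in $V$, adjust so it avoids $B_\sigma\cap\{z_1\le a\le w_1\}$, and close it up with two vertical segments to the upper graph and the arc of $g_+$ between them. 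This simple closed curve encloses one of the two components of $B_\sigma\cap\{u>0\}$, forcing a bounded component of $\{u>0\}$, impossible for a minimizer. Hence $V\cap B_\sigma\subset\{a\ge 0\}$, so $0$ is the only cusp of $\partial V$ in $B_\sigma$; finiteness on $\overline{B_\rho(x)}$ follows by compactness. No blow-up and no aspect-ratio estimate are needed.
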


\begin{proof}
 We first show that if $u$ is as stated and $0\in \p V\cap B_r(x)$ with $V$ a component of $\{u=0\}\cap B_r(x)$, then $0$ is a smooth point for $K$. To this end, take any sequence of blow-ups
 \[
  \1 \frac{K}{r_k},\frac{u(r_k \cdot)-c_k}{\sqrt{r_k}} \2 \rightarrow (K_\8,u_\8)
 \]
in the sense of Lemma \ref{lem:blowup}. In particular, the sets $(\p V\cap B_r(x)) /r_k$ converge in the local Hausdorff sense to a closed set $K_0\ss K_\8$. We now use the following basic property of Hausdorff convergence: if compact sets $J_k\rightarrow J$ in Hausdorff topology, and $J_k$ are connected, then so is $J$. Applying to $\bar{B}_{t r_k} \cap \p V$ (which has at most $N$  connected components; this follows from the fact that $\{u>0\}$ has at most finitely many local connected components), we have that $K_0\cap \bar{B}_t$ has at most $N$ connected components for each $t$. This implies that $K_0$, and so in particular $K_\8$, contains an unbounded connected component. However, by Lemma \ref{lem:convex}, either this means $K_\8$ is a line or a pair of lines (otherwise every connected component of $K$ is either a smooth Jordan curve, which is bounded, or has zero $\cH^1$ measure, which implies it is also bounded). This means $u_\8$ is locally constant, and so by Theorem \ref{thm:energygap} $0$ is a smooth point.

Now each point $y\in \p V \cap B_r(x)$ is of one of two types: either the Lebesgue density of $\{u=0\}$ at $y$ is positive (we call the set of such points $E_1$) or it is zero (these are in $E_2$). Note that $E_1$ is relatively open, and in a neighborhood of each $x\in E_1$, $K$ is given by a single $C^\8$ graph. The proof of the corollary will be complete once we show that each point in $E_2$ is isolated.

Take a point (say $0$) in $E_2$. We have that on a small ball $B_\s (0)$, $K$ is given (in some coordinates) by the graphs of a pair of $C^{1,\a}$ functions $g_-,g_+$, with $g_-\leq g_+$ and $g_-(0)=g_+(0)=0$. Furthermore, $\{u=0\}\cap B_\s \ss \R \times \{g_-<g_+\}$. As $0\in \p V$, we must have that the graphs of $g_-,g_+$ do not touch on $B_\s \cap \{(a,b):a>0\}$ (up to a rotation of $180$ degrees). In particular, this means that $\p V\cap B_\s \cap \{a>0\}\ss E_1$.  Now say there is a point $z\in V$ in $\{a<0\}\cap B_\s$. Then we may find a simple curve $\g \ss V$ joining $z$ to some point $w\in V \cap \{a>0\}\cap B_\s$. Modify $\g$ as needed to ensure that $\g \cap B_\s \cap \{z_1\leq a\leq \w_1\}=\emptyset$. We may let
\[
 \g' = \g \cup \{(z_1,t)|t\in [z_2,g_+(z_1)]\}\cup \{(w_1,t)|t\in [w_2,g_+(w_1)]\}\cup \{(t,g_+(t))|t\in [z_1,w_1]\},
\]
which is a Jordan curve. Then one of the two sets $B_\s \cap \{(a,b):b>g_+(a)\}$ and $B_\s \cap \{(a,b):b<g_-(a)\}$ is contained in the interior of $\g'$, and so the component of $\{u>0\}$ containing it must also be bounded. This is a contradiction.

We have now shown that $B_\s \cap \p V \ss E_1$, which completes the argument.
\end{proof}

\section*{Acknowledgments}Both authors were partially supported by NSF grant DMS-1065926. DK was also partially supported by the NSF MSPRF fellowship DMS-1502852. LC was also partially supported by NSF DMS-1540162.

\bibliographystyle{plain}
\bibliography{Paper1}

\begin{thebibliography}{10}

\bibitem{Aguilera1987}
Caffarelli L. A. Spruck~J. Aguilera, N.~E.
\newblock An optimization problem in heat conduction.
\newblock {\em Annali della Scuola Normale Superiore di Pisa - Classe di
  Scienze}, 14(3):355--387, 1987.

\bibitem{AC}
H.~W. Alt and L.~A. Caffarelli.
\newblock Existence and regularity for a minimum problem with free boundary.
\newblock {\em J. Reine Angew. Math.}, 325:105--144, 1981.

\bibitem{AFP}
Luigi Ambrosio, Nicola Fusco, and Diego Pallara.
\newblock {\em Functions of bounded variation and free discontinuity problems}.
\newblock Oxford Mathematical Monographs. The Clarendon Press, Oxford
  University Press, New York, 2000.

\bibitem{Bon}
Alexis Bonnet.
\newblock On the regularity of the edge set of {M}umford-{S}hah minimizers.
\newblock In {\em Variational methods for discontinuous structures ({C}omo,
  1994)}, volume~25 of {\em Progr. Nonlinear Differential Equations Appl.},
  pages 93--103. Birkh\"auser, Basel, 1996.

\bibitem{2016arXiv160102146B}
D.~{Bucur}, G.~{Buttazzo}, and C.~{Nitsch}.
\newblock {Symmetry breaking for a problem in optimal insulation}.
\newblock {\em ArXiv e-prints}, January 2016.

\bibitem{Bucur2015}
Dorin Bucur and Alessandro Giacomini.
\newblock Shape optimization problems with robin conditions on the free
  boundary.
\newblock {\em Annales de l'Institut Henri Poincare (C) Non Linear Analysis},
  pages~--, 2015.

\bibitem{Bucur2014}
Dorin Bucur and Stephan Luckhaus.
\newblock Monotonicity formula and regularity for general free discontinuity
  problems.
\newblock {\em Archive for Rational Mechanics and Analysis}, 211(2):489--511,
  2014.

\bibitem{D}
Guy David.
\newblock {\em Singular sets of minimizers for the {M}umford-{S}hah
  functional}, volume 233 of {\em Progress in Mathematics}.
\newblock Birkh\"auser Verlag, Basel, 2005.

\bibitem{DL}
Guy David and Jean-Christophe L{\'e}ger.
\newblock Monotonicity and separation for the {M}umford-{S}hah problem.
\newblock {\em Ann. Inst. H. Poincar\'e Anal. Non Lin\'eaire}, 19(5):631--682,
  2002.

\bibitem{DS}
Guy David and Stephen Semmes.
\newblock {\em Analysis of and on uniformly rectifiable sets}, volume~38 of
  {\em Mathematical Surveys and Monographs}.
\newblock American Mathematical Society, Providence, RI, 1993.

\bibitem{DCL}
E.~De~Giorgi, M.~Carriero, and A.~Leaci.
\newblock Existence theorem for a minimum problem with free discontinuity set.
\newblock {\em Arch. Rational Mech. Anal.}, 108(3):195--218, 1989.

\bibitem{FR}
Wendell~H. Fleming and Raymond Rishel.
\newblock An integral formula for total gradient variation.
\newblock {\em Arch. Math. (Basel)}, 11:218--222, 1960.

\bibitem{Giusti}
Enrico Giusti.
\newblock {\em Minimal surfaces and functions of bounded variation}, volume~80
  of {\em Monographs in Mathematics}.
\newblock Birkh\"auser Verlag, Basel, 1984.

\bibitem{John}
Fritz John.
\newblock Extremum problems with inequalities as subsidiary conditions.
\newblock In {\em Studies and {E}ssays {P}resented to {R}. {C}ourant on his
  60th {B}irthday, {J}anuary 8, 1948}, pages 187--204. Interscience Publishers,
  Inc., New York, N. Y., 1948.

\bibitem{Kriv}
D.~Kriventsov.
\newblock A free boundary problem related to thermal insulation: Flat implies
  smooth.
\newblock 2015.
\newblock preprint.

\bibitem{L}
J.~C. Leger.
\newblock Flatness and finiteness in the {M}umford-{S}hah problem.
\newblock {\em J. Math. Pures Appl. (9)}, 78(4):431--459, 1999.

\bibitem{MS}
David Mumford and Jayant Shah.
\newblock Optimal approximations by piecewise smooth functions and associated
  variational problems.
\newblock {\em Comm. Pure Appl. Math.}, 42(5):577--685, 1989.

\bibitem{Simon}
Leon Simon.
\newblock A strict maximum principle for area minimizing hypersurfaces.
\newblock {\em J. Differential Geom.}, 26(2):327--335, 1987.

\end{thebibliography}

\end{document}